\theoremstyle{plain}
\newtheorem{thm}{Theorem}[section]
\newtheorem{lem}[thm]{Lemma}
\newtheorem{defn}[thm]{Definition}
\newtheorem{prop}[thm]{Proposition}
\newtheorem{cor}[thm]{Corollary}
\newtheorem{rem}[thm]{Remark}
\newcommand{\C}{{\mathbb C}}
\newcommand{\R}{{\mathbb R}}
\newcommand{\Z}{{\mathbb Z}}
\newcommand{\N}{{\mathbb N}}
\newcommand{\B}{{\mathbb B}}
\newcommand{\bB}{{\mathbb S}}
\newcommand{\D}{{\mathbb D}}
\newcommand{\I}{{\mathcal I}}
\newcommand{\T}{{\mathcal T}}
\newcommand{\K}{{\mathcal K}}
\newcommand{\dbar}{\bar\partial}
\newcommand{\e}{\varepsilon}
\newcommand{\p}{\partial}
\newcommand{\pphi}{\widetilde{\varphi}}
\newcommand{\ppsi}{\widetilde{\psi}}
\newcommand{\z}{\zeta}
\newcommand{\blankline}{\hspace*{\fill}\mbox{ }}
\title[Equations related to Toeplitz operators]{On integral equations related to \\ weighted Toeplitz operators}
\author{Carme Cascante}
\address{C. Cascante: Dept.\ Matem\`atica Aplicada i An\`alisi,Universitat  de Barcelona, Gran Via 585, 08071 Barcelona, Spain}
\email{cascante@ub.edu}
\author{Joan F\`abrega}
\address{Joan F\`abrega: Dept.\ Matem\`atica Aplicada i An\`alisi,Universitat  de Barcelona, Gran Via 585, 08071 Barcelona, Spain}
\email{joan$_{-}$fabrega@ub.edu}
\author{Daniel Pascuas}
\address{D. Pascuas: Dept.\ Matem\`atica Aplicada i An\`alisi,Universitat  de Barcelona, Gran Via 585, 08071 Barcelona, Spain}
\email{daniel$_{-}$pascuas@ub.edu}
\keywords{Toeplitz operators, Lipschitz spaces, holomorphic Besov spaces, Hardy spaces}
\subjclass[2000]{47B35, 46E15, 32A35, 32A37, 30D55}
\date{\today}
\thanks{Partially supported by  DGICYT Grant MTM2008-05561-C02-01, DURSI Grant 2009SGR 1303 and Grant MTM2007-30904-E}
\begin{document}

\begin{abstract} 
For weighted Toeplitz operators $\T^N_\varphi$ defined on spaces of holomorphic functions in the unit ball,
 we derive regularity properties of the solutions $f$ to the integral equation $\T^N_\varphi(f)=h$ in terms of the
 regularity of the symbol  $\varphi$ and the data $h$. As an application, 
we deduce that if $f\not\equiv0$ is a function in the Hardy space $H^1$ such that its argument  $\bar f/f$ is in a Lipschitz space on the unit sphere $\bB$, then $f$ is also in the same Lipschitz space, extending a result 
of K. Dyakonov to several complex variables.
\end{abstract}

\maketitle

\section{Introduction}
The goal of this paper is to study the regularity of solutions to certain equations related to weighted Toeplitz operators in several complex variables.

We will start by stating some particular cases of the main results in this paper, which involve classical spaces and integral operators and illustrate the object of this paper, although they can be applied in a more general setting.

Let $\B$ denote the open unit ball in $\C^n$ and $\bB$ its boundary.
In the one variable setting ($n=1$), $\B$ and $\bB$ will also be denoted by $\D$ and $\mathbb{T}$, respectively. For any $\tau>0$, $\Lambda_\tau=\Lambda_\tau(\bB)$ is the classical  Lipschitz-Zygmund space on $\bB$.

If $\varphi\in \Lambda_\tau$, we consider the Toeplitz operator 
$\T_\varphi:H^1\rightarrow H^1$, defined by 
$\T_\varphi(f)(z):=\mathcal{P}(\varphi f)(z)$, where $\mathcal{P}$ is the Cauchy projection, given by
$$
\mathcal{P}(\psi)(z):=\int_{\bB} \frac{\psi(\z)}{(1-\bar \z z)^{n}}d\sigma(\z)\qquad(\psi\in L^1(\bB)).
$$
Here $d\sigma$ denotes the normalized Lebesgue measure on $\bB$.
We point out that  $\T_\varphi$ maps $H^1$ to itself because $\varphi\in \Lambda_\tau$.  

For this scale of Lispchitz spaces we prove the following result:

\begin{thm} \label{thm:IntrLip} Let $\tau>0$ and $\varphi\in \Lambda_\tau$ be a non-vanishing function on $\bB$. If $f\in H^1$ and $\T_\varphi(f)\in \Lambda_\tau$, then $f\in\Lambda_\tau$.
\end{thm}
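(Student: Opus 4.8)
I would organize the proof into three steps: reductions, an approximate inverse for $\T_\varphi$, and a regularity bootstrap driven by a commutator estimate. Throughout I use the (standard) fact that the Cauchy projection $\mathcal P$ is bounded on $\Lambda_\tau(\bB)$.

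\emph{Reductions.} Since $\varphi\in\Lambda_\tau$ is bounded and bounded away from $0$ on $\bB$, also $1/\varphi\in\Lambda_\tau$; moreover $\Lambda_\tau$ is a multiplication algebra, so $f\in\Lambda_\tau$ is equivalent to $\varphi f\in\Lambda_\tau$. Write $\varphi f=h+g$ with $h:=\T_\varphi(f)=\mathcal P(\varphi f)$ (holomorphic, and in $\Lambda_\tau$ by hypothesis) and $g:=(I-\mathcal P)(\varphi f)$, so $\mathcal P g=0$. Since $h\in\Lambda_\tau$, it suffices to prove $g\in\Lambda_\tau$. The point I would emphasize is that, because $f$ is holomorphic (so $(I-\mathcal P)f=0$),
\[
g=\varphi\,\mathcal P(f)-\mathcal P(\varphi f)=[\,M_\varphi,\mathcal P\,]f,
\]
i.e.\ $g$ is the commutator of multiplication by $\varphi$ with the Cauchy projection, applied to $f$.

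\emph{Approximate inverse.} Dividing $\varphi f=h+g$ by $\varphi$ and applying $\mathcal P$ (again $\mathcal P f=f$) yields
\[
f=\T_{1/\varphi}(h)+\mathcal P\!\left(\tfrac1\varphi\,[M_\varphi,\mathcal P]f\right),
\]
where $\T_{1/\varphi}(h)=\mathcal P(h/\varphi)\in\Lambda_\tau$ because $h/\varphi\in\Lambda_\tau$. So it would be enough to show that the operator $u\mapsto\mathcal P\bigl(\varphi^{-1}[M_\varphi,\mathcal P]u\bigr)$ carries $f$, known a priori only to lie in $H^1$, into $\Lambda_\tau$.

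\emph{Bootstrap.} This is where the real work lies. The commutator $[M_\varphi,\mathcal P]$ has kernel $\dfrac{\varphi(z)-\varphi(\z)}{(1-\langle z,\z\rangle)^{n}}$, which is strictly less singular than the Cauchy kernel: since $\varphi\in\Lambda_\tau$ and $|z-\z|^{2}\le 2\,|1-\langle z,\z\rangle|$ on $\bB$, it is a fractional-integration kernel on $\bB$ of positive order $\delta:=\min(\tau,1)$. Hence $[M_\varphi,\mathcal P]$, and with it the operator of the previous step, gains a fixed amount of smoothness at each application (by Hardy--Littlewood--Sobolev-type estimates in the non-isotropic geometry of $\bB$); feeding this back into the displayed identity — and observing that at each stage $f$ equals an element of $\Lambda_\tau$ plus a term in the next, better space, while $\Lambda_\tau$ embeds in all intermediate spaces — moves $f$ up the scale $H^1\subset H^q\subset\cdots\subset L^\infty\subset\Lambda_{\tau'}$ and, after finitely many iterations, into $\Lambda_\tau$; with the reductions this gives $f\in\Lambda_\tau$. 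For $\tau\ge 1$ the commutator only gains one derivative, so one must in addition differentiate the equation to exploit the higher-order smoothness of $\varphi$; in the setting of this paper that is exactly the role of the weighted Toeplitz operators $\T^{N}_\varphi$, and Theorem~\ref{thm:IntrLip} would then follow from the regularity theory for $\T^{N}_\varphi(f)=h$ with $N$ chosen according to $\tau$ (the case $n=1$ recovering Dyakonov's theorem). The main obstacle I anticipate is precisely the quantitative smoothing estimate for $[M_\varphi,\mathcal P]$ and its weighted analogues on $\bB$, together with organizing the iteration so that the gain is uniform: the naive bound $|\varphi f|\le\|\varphi\|_\infty|f|$ is useless, and one must genuinely use both the smoothness of $\varphi$ and the holomorphy of $f$, which enter through the cancellation built into the commutator.
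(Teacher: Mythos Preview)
Your outline is essentially the paper's argument, phrased on the sphere rather than in the ball. The decomposition $\varphi f=h+g$ with $g=(I-\mathcal P)(\varphi f)=[M_\varphi,\mathcal P]f$ is, after extending $\varphi$ to $\B$ by some $\Phi$, exactly the paper's equation~\eqref{eqn:form210}: Charpentier's formula $\psi=\mathcal P^N\psi+\K^N(\bar\partial\psi)$ (Theorem~\ref{thm:rep}) rewrites $(I-\mathcal P^N)(\Phi f)$ as $\K^N(f\,\bar\partial\Phi)$, and the commutator smoothing you invoke is precisely what the condition $\pphi(z)\lesssim(1-|z|^2)^{\tau_0}$ in Definition~\ref{def:G} captures. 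The bootstrap you sketch is carried out explicitly in Section~4 in three phases, $B^1_{-N_0}\to B^1_{-t}$ (all $t>0$), then $B^1_{-t}\to B^\infty_{-t}$, then $B^\infty_{-t}\to B^\infty_\tau$; the higher-order smoothness of $\varphi$ enters in the last phase via Lemma~\ref{lem:der35}, which is the differentiated version of your identity.

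Two points need correcting. First, for $n>1$ the gain $\delta=\min(\tau,1)$ is too optimistic: since only $|\zeta-\eta|^2\lesssim|1-\bar\zeta\eta|$ on $\bB$, the hypothesis $\varphi\in\Lambda_\tau$ yields merely $|\varphi(\zeta)-\varphi(\eta)|\lesssim|1-\bar\zeta\eta|^{\tau/2}$, so the commutator gains only $\tau/2$ in the non-isotropic scale that governs holomorphic Lipschitz regularity. This does not break the iteration, but it is exactly why the paper treats $n>1$, $0<\tau\le 1/2$ separately in the proof of Theorem~\ref{thm:LipE}: one first uses $\Lambda_\tau\subset\Gamma_{\tau/2}$ and the general machinery to reach $f\in B^\infty_{\tau/2}$, then makes a direct half-step to $B^\infty_\tau$. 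Second, the weighted operators are not used by fixing a single $N$ according to $\tau$; rather, the weights $N+k$ appear automatically when one differentiates the representation (Lemma~\ref{lem:derPN}), and the whole family is used along the bootstrap.
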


This result extends \cite[Theorem 3.1]{KD}, which deals with the case  $n=1$ and the regularity of the solutions to the equation $\T_\varphi(f)=0$. 

We remark that if we drop the condition $0\notin\varphi(\bB)$, then Theorem \ref{thm:IntrLip} is not true in general. Indeed, we only need to consider the symbol $\varphi(\z)=(1-\z_1)^\tau$ and the function $f(\z)=(1-\z_1)^{-\tau}$ with  $0<\tau<n$ (to ensure that $f\in H^1$).

As in the one variable case (see \cite{KD}), the above theorem implies some interesting properties of the 
holomorphic Lipschitz functions.
For instance, 

\begin{cor}\label{cor:Intarg0}
If $f\in H^1$, $\varphi\in \Lambda_\tau$, such that $0\notin \varphi(\bB)$ and 
$\varphi f\in \Lambda_\tau+\ker \mathcal{P}\subset L^1(\bB)$, then $f\in\Lambda_\tau$. 
\end{cor}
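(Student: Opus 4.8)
The plan is to reduce Corollary~\ref{cor:Intarg0} to Theorem~\ref{thm:IntrLip} by hitting the hypothesis $\varphi f\in\Lambda_\tau+\ker\mathcal P$ with the Cauchy projection. First I would check that all the objects involved are meaningful: since $\varphi\in\Lambda_\tau\subset L^\infty(\bB)$ and $f\in H^1\subset L^1(\bB)$, the product $\varphi f$ belongs to $L^1(\bB)$, so $\mathcal P(\varphi f)$ is well defined and, by definition, equals $\T_\varphi(f)$. Now decompose $\varphi f=g+k$ with $g\in\Lambda_\tau$ and $k\in\ker\mathcal P$; applying $\mathcal P$ and using $\mathcal P(k)=0$ gives the key identity $\T_\varphi(f)=\mathcal P(\varphi f)=\mathcal P(g)$.

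The second step is to observe that $\mathcal P$ maps $\Lambda_\tau(\bB)$ into itself for every $\tau>0$ — the boundedness of the Cauchy (Szeg\H{o}) projection on the Lipschitz--Zygmund scale, a classical fact which is also implicit in the mapping properties underlying Theorem~\ref{thm:IntrLip}. Consequently $\T_\varphi(f)=\mathcal P(g)\in\Lambda_\tau$. At this point $f\in H^1$, $\varphi\in\Lambda_\tau$ is non-vanishing on $\bB$, and $\T_\varphi(f)\in\Lambda_\tau$, so Theorem~\ref{thm:IntrLip} applies verbatim and yields $f\in\Lambda_\tau$, which is the desired conclusion.

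The argument is short and essentially formal once Theorem~\ref{thm:IntrLip} is available; the only genuine ingredient is the mapping property $\mathcal P\colon\Lambda_\tau\to\Lambda_\tau$, which is where I expect any subtlety to lie (in particular the use of the Zygmund-type definition of $\Lambda_\tau$ to cover integer values of $\tau$). Everything else — integrability of $\varphi f$, the vanishing $\mathcal P(k)=0$, and the identification $\T_\varphi(f)=\mathcal P(\varphi f)$ — is routine bookkeeping.
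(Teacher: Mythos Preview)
Your proposal is correct and follows essentially the same route as the paper: decompose $\varphi f=g+k$, apply $\mathcal P$ to get $\T_\varphi(f)=\mathcal P(g)$, invoke the mapping property of the Cauchy projection on $\Lambda_\tau$ (stated in the paper as Proposition~\ref{prop:contG}(iii), with reference to \cite[\S6.4]{Ru}), and then apply Theorem~\ref{thm:IntrLip}. The paper phrases the mapping property as $\mathcal P\colon\Lambda_\tau\to B^\infty_\tau$, but since $B^\infty_\tau$ is exactly the space of holomorphic functions with boundary values in $\Lambda_\tau$, this is the same statement you use.
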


In particular, we have: 
\begin{cor}\label{cor:Intarg}
If $f\in H^1\setminus\{0\}$ and its argument function $\varphi=\bar f/f$ is in $\Lambda_\tau$,
 then $f\in\Lambda_\tau$. 
\end{cor}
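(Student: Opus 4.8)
The plan is to deduce this from Corollary \ref{cor:Intarg0} by checking its hypotheses for the particular symbol $\varphi=\bar f/f$. First I would observe that $\varphi=\bar f/f$ is a unimodular function on $\bB$ (defined $\sigma$-a.e., since $f\in H^1\setminus\{0\}$ has nonzero boundary values a.e.), so trivially $0\notin\varphi(\bB)$; the real content is to show that $\varphi\in\Lambda_\tau$ and that $\varphi f\in\Lambda_\tau+\ker\mathcal P$. For the first point, note that $\varphi=\bar f/f=\overline{f}^{\,2}/|f|^2 = \bar f^2/(f\bar f)$, but the cleaner route is: the hypothesis is precisely that $\varphi=\bar f/f\in\Lambda_\tau$, so nothing needs to be proven there—it is given.

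The key step is therefore the membership $\varphi f\in\Lambda_\tau+\ker\mathcal P$. Here I would simply compute $\varphi f=(\bar f/f)\,f=\bar f$. Since $f\in H^1$, its boundary function has a convergent expansion, and $\bar f$ is the boundary value of an anti-holomorphic function; concretely, writing $f=\sum_{k\ge0} f_k$ with $f_k$ homogeneous of degree $k$, we have $\bar f=\overline{f_0}+\sum_{k\ge1}\overline{f_k}$, and each $\overline{f_k}$ with $k\ge1$ is annihilated by the Cauchy projection $\mathcal P$ (which reproduces holomorphic data and kills the nonconstant anti-holomorphic part). Thus $\bar f-\overline{f(0)}\in\ker\mathcal P$, while the constant $\overline{f(0)}$ lies in $\Lambda_\tau$; hence $\varphi f=\bar f\in\Lambda_\tau+\ker\mathcal P\subset L^1(\bB)$, the last inclusion because $\bar f\in L^1(\bB)$ as $f\in H^1$. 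Now Corollary \ref{cor:Intarg0} applies and yields $f\in\Lambda_\tau$.

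I expect the only genuine subtlety to be the justification that $\overline{f_k}\in\ker\mathcal P$ for $k\ge1$, i.e.\ that $\mathcal P(\bar g)=\overline{g(0)}$ for $g\in H^1$; this is the standard orthogonality of holomorphic and anti-holomorphic homogeneous polynomials against the Cauchy kernel on $\bB$, and should be stated (or cited) rather than recomputed. One should also make sure the a.e.-defined unimodular quotient $\bar f/f$ is a legitimate bounded symbol so that the product $\varphi f=\bar f$ is taken in the right sense; since $|\varphi|=1$ a.e.\ and $f\in H^1$, the product is unambiguously $\bar f\in L^1(\bB)$, so there is no difficulty. With these remarks in place the corollary is immediate from Corollary \ref{cor:Intarg0}.
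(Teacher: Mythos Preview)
Your proposal is correct and matches the paper's own deduction. The paper likewise derives Corollary~\ref{cor:Intarg} from Corollary~\ref{cor:Intarg0} (and, in the formal proof given later as Corollary~\ref{cor:Intarg:generalized}, directly from Theorem~\ref{thm:LipE}) via the single observation you isolate: $\varphi f=\bar f$ and $\mathcal{P}(\bar f)=\overline{f(0)}$, i.e.\ $\bar f-\overline{f(0)}\in\ker\mathcal{P}$, so $\varphi f\in\Lambda_\tau+\ker\mathcal{P}$; the unimodularity of $\varphi$ gives $0\notin\varphi(\bB)$.
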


The preceding corollary is proved in~{\cite{KD}} for $n=1$.


In this paper we prove the above results and extend them to weighted Toeplitz operators associated to
 more general symbols.

We will denote by $\Gamma_\tau=\Gamma_\tau(\bB),\,\tau>0$, the Lipschitz-Zygmund space on $\bB$ with respect to the pseudodistance $d(\z,\eta)=|1-\bar\z\eta|$ (see Subsection \ref{sec:G} for precise definitions).
 Since $|1-\bar\z\eta|\le|\z-\eta|$, it is clear that $\Gamma_\tau$ is a subspace of $\Lambda_\tau$. For a positive integer $k$, and real numbers $0<\tau_0\le \tau<k$, $0<\tau_0< 1/2$, we will consider  spaces $G^{\tau_0}_{\tau,k}(\B)\subset \Lambda_{\tau_0}(\bar \B)\cap \mathcal{C}^k(\B)$, whose restrictions to $\bB$ contain the space $\Gamma_\tau$, and also the space  $\Lambda_\tau$, for $\tau>1/2$. Moreover, they satisfy that their intersection with the space $H=H(\B)$ of holomorphic functions on $\B$, coincides with the Lipschitz-Zygmund space  of holomorphic functions on $\B$, denoted by $B^\infty_\tau$. These holomorphic spaces $B^\infty_\tau$ are characterized in terms of the growth of the derivatives  (see Subsection \ref{sec:G} for a precise definition and their main properties). 

For $N>0$, let $d\nu_N(z):=c_N(1-|z|^2)^{N-1}d\nu(z)$, where $\nu$ is the Lebesgue measure on $\B$
and $c_N=\frac{\Gamma(n+N)}{n!\Gamma(N)}$, so that $\nu_N(\B)=1$. Let $L^1_N:=L^1(B,d\nu_N)$ and consider the weighted Bergman
 projection $\mathcal{P}^N:L^1_N\to H$  defined by
$$
\mathcal{P}^N(\psi)(z):=\int_{\B} \frac{\psi(w)}{(1-\bar w z)^{n+N}}\,d\nu_N(w).
$$
 Let $B^1_{-N}:=L^1_N\cap H$ and for $\varphi\in L^{\infty}(\B)$, define the weighted Toeplitz operator
 $\T^N_\varphi:B^1_{-N}\rightarrow H$ by $\T^N_\varphi(f):=\mathcal{P}^N(\varphi f)$.
Since $\lim_{N\searrow 0} \mathcal{P}^N(\psi)=\mathcal{P}(\psi)$ (see \cite[\S\,0.3]{Bea-Bur}), we extend these definitions
 to $N=0$, by $\mathcal{P}^0=\mathcal{P}$ and $\T_\varphi^0=\T_\varphi$, $\varphi\in L^\infty(\bB)$.
 In these cases, the operators are defined on $L^1(\bB)$ and $H^1$, respectively.

The next two theorems are the main results of this paper.

\begin{thm}\label{thm:kerNInt}  Let  $\varphi\in G^{\tau_0}_{\tau,k}$ such that $0\notin\varphi(\bB)$.
 If $f\in B^1_{-N}$ satisfies $\T^N_{\varphi}(f)=h\in B^\infty_\tau$, then $f\in B^\infty_\tau$ and 
$\|f\|_{B_\tau^\infty}\le C\left(\|f\|_{B^1_{-N}}+\|h\|_{B^\infty_\tau}\right)$, where 
$C>0$ is a finite constant only depending on $\varphi$, $N>0$ and $n$.
 In particular, $\|f\|_{B_\tau^\infty}\leq C\|f\|_{B_{-N}^1}$, for  any $f\in\ker \T^N_{\varphi}$.
\end{thm}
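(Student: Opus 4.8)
The plan is to convert the Toeplitz equation $\T^N_\varphi(f)=h$ into a fixed-point / perturbation problem for $f$ itself, exploiting that $\varphi$ is bounded away from $0$. Since $0\notin\varphi(\bB)$ and $\varphi$ is continuous up to the boundary (as $\varphi\in G^{\tau_0}_{\tau,k}\subset\Lambda_{\tau_0}(\bar\B)$), there is $\delta>0$ with $|\varphi|\ge\delta$ on a neighborhood of $\bB$ in $\bar\B$; write $1/\varphi$ on that neighborhood and note that, because $\varphi$ lies in the algebra $G^{\tau_0}_{\tau,k}$ and is non-vanishing there, a cutoff of $1/\varphi$ again belongs to (or is controlled in) the relevant Lipschitz/$G$-class. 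The first step is therefore to record the algebra and reciprocal properties of the spaces $G^{\tau_0}_{\tau,k}$ and $B^\infty_\tau$ (products of a $G^{\tau_0}_{\tau,k}$ function with a $B^\infty_\tau$ function land in $B^\infty_\tau$ with norm control), which I would either cite from Subsection \ref{sec:G} or prove by the standard derivative-growth characterization of $B^\infty_\tau$.

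Next I would set up the key identity. Writing $\psi:=1/\varphi$ near $\bB$ (extended smoothly into $\B$), one has $\psi\varphi=1$ on a boundary collar, so $\psi\varphi f=f$ there; applying $\mathcal{P}^N$ and using $\mathcal{P}^N(f)=f$ for $f\in B^1_{-N}\cap H$ gives
\begin{equation}\label{eq:proposal-key}
f=\mathcal{P}^N(\psi\varphi f)=\mathcal{P}^N\bigl(\psi\,\T^N_\varphi(f)\bigr)+\mathcal{P}^N\bigl(\psi(\varphi f-\mathcal{P}^N(\varphi f))\bigr)=\mathcal{P}^N(\psi h)+E(f),
\end{equation}
where $E(f):=\mathcal{P}^N\bigl(\psi\,(\mathrm{Id}-\mathcal{P}^N)(\varphi f)\bigr)$ is the "error" operator measuring the failure of $\varphi f$ to be holomorphic. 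The point is that $(\mathrm{Id}-\mathcal{P}^N)(\varphi f)$ is annihilated against holomorphic test functions and is built from $\dbar(\varphi f)=(\dbar\varphi)f$, so $E$ should be a \emph{smoothing} operator: it gains regularity because $\dbar\varphi\in\mathcal{C}^{k-1}$-type estimates let one integrate by parts. Concretely I would represent $E$ via the Bergman kernel and the $\dbar$-data $(\dbar\varphi)f$, then estimate: since $\varphi\in G^{\tau_0}_{\tau,k}$ controls $k$ derivatives of $\varphi$ with the right weights, $E$ maps $B^1_{-N}$ into $B^\infty_\tau$ (or at least raises the index by a definite amount), with $\|E(f)\|_{B^\infty_\tau}\le C\|f\|_{B^1_{-N}}$ plus possibly a small multiple of a higher norm of $f$ that can be absorbed.

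The main term in \eqref{eq:proposal-key} is immediate: $\mathcal{P}^N(\psi h)\in B^\infty_\tau$ with norm $\le C\|h\|_{B^\infty_\tau}$, by the multiplier property together with boundedness of $\mathcal{P}^N$ on $B^\infty_\tau$. Combining, one gets $f=\mathcal{P}^N(\psi h)+E(f)$ with good control, and a bootstrap argument closes the estimate: starting from $f\in B^1_{-N}$, the right-hand side lies in an intermediate space, feed that back in, and after finitely many iterations (the number depending on $k$, $\tau$, $N$) reach $f\in B^\infty_\tau$ with $\|f\|_{B^\infty_\tau}\le C(\|f\|_{B^1_{-N}}+\|h\|_{B^\infty_\tau})$. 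The final sentence about $\ker\T^N_\varphi$ is the case $h=0$. I expect the main obstacle to be the smoothing estimate for $E$: one must track exactly how many derivatives of $\varphi$ are available (hence the hypothesis $\tau<k$), choose the smooth extension $\psi$ of $1/\varphi$ carefully so its derivatives near $\bB$ obey $G^{\tau_0}_{\tau,k}$-type bounds, and control the Bergman-kernel integrals uniformly in $N>0$ — the delicate point being the interplay of the weight $(1-|w|^2)^{N-1}$ with the boundary behavior of $\dbar\varphi$ and with the singularity of $(1-\bar wz)^{-(n+N)}$.
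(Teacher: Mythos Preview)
Your overall strategy is correct and is essentially the paper's: once you recognize that $(\mathrm{Id}-\mathcal{P}^N)(\varphi f)=\K^N(\bar\p(\varphi f))=\K^N(f\,\bar\p\varphi)$ by the Charpentier representation (Theorem~\ref{thm:rep}), your error operator $E$ is built from exactly the term $\K^N(f\,\bar\p\varphi)$ that drives the paper's argument, and your ``multiply by $\psi\approx\chi/\varphi$'' step is the paper's equation~\eqref{eqn:form21}. Two issues, one minor and one substantive.

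The minor one: your displayed identity $f=\mathcal{P}^N(\psi\varphi f)$ is false when $\psi\varphi=1$ only on a boundary collar. What holds is $f=\mathcal{P}^N(\psi\varphi f)-\mathcal{P}^N\bigl((\psi\varphi-1)f\bigr)$, and the correction, being $\mathcal{P}^N$ of a compactly supported function, is harmless; this is the role of the paper's $(1-\chi)f$ term. Also, composing with $\mathcal{P}^N(\psi\,\cdot\,)$ is unnecessary: the paper simply multiplies \eqref{eqn:form210} by $\chi/\varphi$ and uses that the left side is already holomorphic, so it suffices to put the right side into $G^{\tau_0}_{\tau,k}$.

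The substantive one: the smoothing of $E$ is weaker than you suggest, and the bootstrap is not a single chain. The gain per iteration is only $\tau_0$, coming solely from the estimate $\pphi(w)\lesssim(1-|w|^2)^{\tau_0}$; the $k$ \emph{holomorphic} derivatives of $\varphi$ in the $G^{\tau_0}_{\tau,k}$ norm do not contribute to the smoothing of $\K^N(f\,\bar\p\varphi)$ at this stage. Iterating within the $B^1$-scale therefore stalls at ``$f\in B^1_{-t}$ for every $t>0$'' and does not by itself reach any $B^\infty$ space. The paper needs three genuinely distinct steps: (1) the $B^1_{-s}$ bootstrap just described (Lemmas~\ref{lem1:thm:kerT1}--\ref{lem2:thm:kerT1}); (2) a separate $L^p$ bootstrap, via H\"older's inequality and the embedding $B^p_{-s}\subset B^\infty_{-s-n/p}$ (Lemmas~\ref{lem3:thm:kerT1}--\ref{lem4:thm:kerT1}), to pass from $B^1_{-t}$ to $B^\infty_{-t}$ for all $t>0$; (3) a final climb from $H^\infty$ to $B^\infty_\tau$, again $\tau_0$ at a time, using the derivative identity of Lemma~\ref{lem:der35}, which is where the $k$ holomorphic derivatives of $\varphi$ finally enter. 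Step~(2) is the piece your outline has not anticipated.
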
 

The corresponding statement for the case $N=0$ is:

\begin{thm}\label{thm:ker0Int} Let $\varphi$ be the restriction to $\bB$ of a function in $G^{\tau_0}_{\tau,k}$ and
 let $f\in H^1$. If $0\notin \varphi(\bB)$ and $\T_\varphi (f)=h\in B^\infty_\tau$, then $f\in B^\infty_\tau$
 and $\|f\|_{B_\tau^\infty}\le C\left(\|f\|_{H^1}+\|h\|_{B^\infty_\tau}\right)$,  where 
$C>0$ is a finite constant only depending on $\varphi$ and $n$.
 In particular, $\|f\|_{B_\tau^\infty}\leq C\|f\|_{H^1}$, for  any $f\in\ker \T_{\varphi}$.
\end{thm}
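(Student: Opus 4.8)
The plan is to deduce Theorem~\ref{thm:ker0Int} from Theorem~\ref{thm:kerNInt} by comparing the Cauchy projection $\mathcal{P}$ with a weighted Bergman projection $\mathcal{P}^N$, for one conveniently fixed value $N>0$ (say $N=1$). By hypothesis $\varphi=\pphi|_{\bB}$ for some $\pphi\in G^{\tau_0}_{\tau,k}$, and $0\notin\pphi(\bB)=\varphi(\bB)$, so Theorem~\ref{thm:kerNInt} is available for $\pphi$. Moreover, since the $L^1(\bB)$ integral means of $f\in H^1$ over the spheres $|\z|=r$ are bounded by $\|f\|_{H^1}$, one has $H^1\subset B^1_{-N}$ with $\|f\|_{B^1_{-N}}\lesssim\|f\|_{H^1}$; in particular $f\in B^1_{-N}$, $\pphi f\in L^1_N$, and $\T^N_{\pphi}(f)=\mathcal{P}^N(\pphi f)$ is a well-defined holomorphic function.

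The key step is the identity
\begin{equation}\label{eq:sketch}
\T^N_{\pphi}(f)=h+R(f),\qquad R(f)(z):=\int_{\bB}f(\z)\,E_N(z,\z)\,d\sigma(\z),\quad E_N(z,\z):=\int_{\B}\frac{\pphi(w)-\pphi(\z)}{(1-\bar\z w)^{n}(1-\bar w z)^{n+N}}\,d\nu_N(w).
\end{equation}
To obtain it, I would start from the Cauchy reproducing formula $f(w)=\int_{\bB}f(\z)(1-\bar\z w)^{-n}\,d\sigma(\z)$, valid for $f\in H^1$ and $w\in\B$, substitute it into $\mathcal{P}^N(\pphi f)(z)$, and interchange the order of integration; this is legitimate since, for fixed $z\in\B$, the standard weighted integral estimate $\int_{\B}(1-|w|^2)^{N-1}|1-\bar w z|^{-(n+N)}|1-\bar\z w|^{-n}\,d\nu(w)<\infty$ holds uniformly in $\z\in\bB$ (here $N>0$ is exactly what is needed). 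Writing $\pphi(w)=\pphi(\z)+(\pphi(w)-\pphi(\z))$ inside the resulting inner integral and using the reproducing identity $\int_{\B}(1-\bar\z w)^{-n}(1-\bar w z)^{-(n+N)}\,d\nu_N(w)=(1-\bar\z z)^{-n}$ (valid for $\z\in\bB$, since $w\mapsto(1-\bar\z w)^{-n}$ lies in $B^1_{-N}$ and is reproduced by $\mathcal{P}^N$), the $\pphi(\z)$-term contributes $\pphi(\z)(1-\bar\z z)^{-n}=\varphi(\z)(1-\bar\z z)^{-n}$, whose $d\sigma$-integral against $f$ is precisely $\mathcal{P}(\varphi f)(z)=\T_{\varphi}(f)(z)=h(z)$; the remaining term is $R(f)(z)$, which proves \eqref{eq:sketch}.

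Granting the boundedness $R\colon H^1\to B^\infty_\tau$, the argument closes quickly: by \eqref{eq:sketch}, $\T^N_{\pphi}(f)=h+R(f)\in B^\infty_\tau$ with $\|\T^N_{\pphi}(f)\|_{B^\infty_\tau}\lesssim\|h\|_{B^\infty_\tau}+\|f\|_{H^1}$, and Theorem~\ref{thm:kerNInt} applied to $\pphi$ then gives $f\in B^\infty_\tau$ with $\|f\|_{B^\infty_\tau}\lesssim\|f\|_{B^1_{-N}}+\|\T^N_{\pphi}(f)\|_{B^\infty_\tau}\lesssim\|f\|_{H^1}+\|h\|_{B^\infty_\tau}$, the constant depending only on $\varphi$ (through the fixed extension $\pphi$) and $n$. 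Taking $h=0$ gives the statement about $\ker\T_{\varphi}$.

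\textbf{Main obstacle.} The hard part is the boundedness of $R\colon H^1\to B^\infty_\tau$. Since $B^\infty_\tau$ is described by a growth condition on the derivatives (Subsection~\ref{sec:G}) and $E_N(z,\z)$ is holomorphic in $z$, this reduces to kernel estimates of the form $|\p_z^{m}E_N(z,\z)|\lesssim|1-\bar\z z|^{-n+\tau-m+\eps}$ for a fixed integer $m>\tau$ and some $\eps\in(0,m-\tau)$: once these hold, $\int_{\bB}|\p_z^{m}E_N(z,\z)|\,d\sigma(\z)\lesssim(1-|z|^2)^{\tau-m+\eps}\lesssim(1-|z|^2)^{\tau-m}$ by the standard integral estimates on the sphere, and $\|R(f)\|_{B^\infty_\tau}\lesssim\|f\|_{L^1(\bB)}\lesssim\|f\|_{H^1}$ follows. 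Proving these kernel bounds is precisely where the defining properties of the spaces $G^{\tau_0}_{\tau,k}$ enter: after integrating by parts in $w$ (or invoking a Koppelman-type representation) to transfer the singular factor onto the derivatives of $\pphi$, one exploits that the anti-holomorphic derivatives of $\pphi$ decay near $\bB$ at the rate dictated by $\tau$, together with the interior $\mathcal{C}^k$-bounds on $\pphi$ — the hypothesis $\tau<k$ ensuring enough differentiability — in order to offset the singularity of the composed Bergman--Cauchy kernel and the mild factor $|\pphi(w)-\pphi(\z)|\lesssim|w-\z|^{\tau_0}$ coming from $\pphi\in\Lambda_{\tau_0}(\bar\B)$. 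A preliminary reduction to the case of $f$ holomorphic in a neighbourhood of $\bar\B$, via the dilations $f_r(z)=f(rz)$ and a limiting argument ($f_r\to f$ in $H^1$, continuity of $\mathcal{P}$ and $\mathcal{P}^N$ on $L^1$), can be used to make the integral manipulations leading to \eqref{eq:sketch} fully rigorous.
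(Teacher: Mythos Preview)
Your identity \eqref{eq:sketch} is correct, and the overall reduction idea is natural, but the claimed boundedness of $R:H^1\to B^\infty_\tau$ is \emph{false} in general, so the argument cannot be completed along these lines. Take $n=1$, any $N>0$, and $\pphi(w)=2+\bar w$; this symbol belongs to $G^{\tau_0}_{\tau,k}$ for every admissible choice of $\tau_0,\tau,k$. A direct coefficient computation using $\mathcal{P}^N(\bar w\,w^j)=\tfrac{j}{j+N}z^{j-1}$ and $\mathcal{P}(\bar\zeta\,\zeta^j)=z^{j-1}$ gives $R(z^j)=-\tfrac{N}{j+N}z^{j-1}$, so $R$ acts on power series as the multiplier $a_j\mapsto -\tfrac{N}{j+N}a_{j-1}$, i.e.\ it gains essentially one order of smoothing. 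Testing on $f_\delta(z)=(1-z)^{-(1-\delta)}\in H^1$ ($0<\delta<1$) yields $|(Rf_\delta)'(r)|\simeq(1-r)^{\delta-1}$, whence $Rf_\delta\notin B^\infty_\tau$ whenever $\tau>\delta$. Thus $R:H^1\to B^\infty_\tau$ fails for \emph{every} $\tau>0$. The underlying reason is that the only smoothing available in the kernel $E_N$ comes either from $|\pphi(w)-\pphi(\zeta)|\lesssim|1-\bar\zeta w|^{\tau_0/2}$ or, after a Koppelman-type manipulation, from $\widetilde{\pphi}(w)\lesssim(1-|w|^2)^{\tau_0}$; neither yields a gain of order $\tau$, and the ``holomorphic'' bounds $|\partial^j\pphi|\lesssim\omega_{\tau-j}$ are not accessible here because integrating by parts in $w$ against $(1-\bar\zeta w)^{-n}$ only worsens the singularity. (There is also a separate slip in your sketch: an $L^1(\bB)\to L^\infty$ type bound requires control of $\sup_\zeta|\partial_z^mE_N(z,\zeta)|$, not of $\int_{\bB}|\partial_z^mE_N(z,\zeta)|\,d\sigma(\zeta)$.)

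The paper avoids this obstruction by \emph{not} reducing the case $N=0$ to Theorem~\ref{thm:kerNInt}. Instead it proves a single technical statement (Theorem~\ref{thm:kerT1}) covering all $N\ge0$: from $\T^N_\varphi(f)=h$ and the representation formula one gets the pointwise identity $\varphi f=\K^N(f\,\bar\partial\varphi)+h$ on $\B$ (equation~\eqref{eqn:form210}), which makes sense for $f\in B^1_{-N_0}$ with $0<N_0<N+\tau_0$; the case $N=0$ uses only $H^1\subset B^1_{-N_0}$ for $0<N_0<\tau_0$. Because $0\notin\varphi(\bB)$, one can solve for $f$ near $\bB$ and iterate: each application of $\K^N$ to $f\,\bar\partial\varphi$ gains $\tau_0$ of regularity, and a three-step bootstrap ($B^1_{-N_0}\Rightarrow B^1_{-t}$ for all $t>0$, then $B^p_{-t}$ with $p\nearrow\infty$, then $B^\infty_{-t}\Rightarrow B^\infty_\tau$) reaches $B^\infty_\tau$ after finitely many steps. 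The key difference from your plan is that the gain of $\tau_0$ is applied to $f$ itself, repeatedly, rather than demanded of a single linear operator acting on all of $H^1$.
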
 

The preceding theorem was proved in \cite[Theorem 3.1]{KD} for $n=1$, $\varphi\in\Lambda_\tau$ and
  $h=0$.

Note that the inequalities in the above theorems are in fact equivalences due to the continuity of
 both the Toeplitz operator and the embeddings $B^\infty_\tau\subset H^1\subset B^1_{-N}$.


For $\tau>1/2$,  the restriction to $\bB$ of $G_{\tau,k}^{\tau_0}$ contains the space  $\Lambda_\tau$,  hence Theorem~{\ref{thm:ker0Int}} includes the result of Theorem \ref{thm:IntrLip} for these cases. 
However, the same techniques used to prove the above theorems allow us to extend this result to the whole scale of spaces $\Lambda_\tau$.

\begin{cor} If either $f\in B^1_{-N}$, for $N>0$, or $f\in H^1$ and $N=0$, $\varphi\in G_{\tau,k}^{\tau_0}$, $0\notin\varphi(\bB)$ and $\varphi f\in G_{\tau,k}^{\tau_0}+\ker \mathcal{P}^N$,  then $f\in B^\infty_\tau$.
\end{cor}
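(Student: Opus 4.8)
The plan is to reduce this corollary to Theorems \ref{thm:kerNInt} and \ref{thm:ker0Int} by splitting off the kernel part of the Bergman (or Cauchy) projection. Write the hypothesis $\varphi f = g + r$ with $g\in G^{\tau_0}_{\tau,k}$ and $r\in\ker\mathcal{P}^N$. Applying $\mathcal{P}^N$ kills $r$, so $\T^N_\varphi(f)=\mathcal{P}^N(\varphi f)=\mathcal{P}^N(g)$. The point is then that $\mathcal{P}^N$ maps $G^{\tau_0}_{\tau,k}$ into $B^\infty_\tau$: indeed, the holomorphic part of $G^{\tau_0}_{\tau,k}$ is exactly $B^\infty_\tau$ (as recalled in the excerpt, $G^{\tau_0}_{\tau,k}\cap H = B^\infty_\tau$), and $\mathcal{P}^N$ is bounded on the relevant Lipschitz-type scale, so $\mathcal{P}^N(g)\in B^\infty_\tau$. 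Hence $h:=\T^N_\varphi(f)\in B^\infty_\tau$.

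Once we know $h\in B^\infty_\tau$, we are exactly in the situation of Theorem \ref{thm:kerNInt} (when $N>0$ and $f\in B^1_{-N}$) or Theorem \ref{thm:ker0Int} (when $N=0$ and $f\in H^1$): the symbol $\varphi\in G^{\tau_0}_{\tau,k}$ is non-vanishing on $\bB$, $f$ lies in the appropriate source space, and $\T^N_\varphi(f)=h\in B^\infty_\tau$. Those theorems then immediately give $f\in B^\infty_\tau$, together with the quantitative estimate $\|f\|_{B^\infty_\tau}\le C(\|f\|_{B^1_{-N}}+\|h\|_{B^\infty_\tau})$, which combined with $\|h\|_{B^\infty_\tau}=\|\mathcal{P}^N(g)\|_{B^\infty_\tau}\lesssim \|g\|_{G^{\tau_0}_{\tau,k}}$ yields a bound in terms of the data. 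So the corollary is essentially a packaging of the two main theorems.

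The only genuine point to check — and the one I expect to be the main (minor) obstacle — is the boundedness of $\mathcal{P}^N$ (resp.\ $\mathcal{P}$) from $G^{\tau_0}_{\tau,k}$ to $B^\infty_\tau$, i.e.\ that projecting a function in the big Lipschitz-type space onto holomorphic functions lands in the holomorphic Lipschitz-Zygmund space of the same order. This should follow from the standard Forelli--Rudin / kernel estimates for the weighted Bergman kernel $(1-\bar w z)^{-(n+N)}$ together with the derivative-growth characterization of $B^\infty_\tau$ stated in Subsection \ref{sec:G}; one differentiates under the integral sign $k$ times, estimates $\partial^k_z (1-\bar w z)^{-(n+N)}$, and uses the defining $\Lambda_{\tau_0}$-smoothness of $g\in G^{\tau_0}_{\tau,k}$ (and its $\mathcal{C}^k$-bounds on $\B$) to control the resulting integrals. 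This is the kind of estimate that is presumably already established, or routine given the tools, in the body of the paper; I would either cite it or give the short kernel computation. With that in hand, the proof of the corollary is a two-line deduction from Theorems \ref{thm:kerNInt} and \ref{thm:ker0Int}.
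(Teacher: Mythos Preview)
Your proposal is correct and matches the paper's approach exactly: the paper's one-line proof cites Theorem~\ref{thm:kerT2} (which packages Theorems~\ref{thm:kerNInt} and~\ref{thm:ker0Int}) together with Proposition~\ref{prop:contG}(ii), which is precisely the boundedness of $\mathcal{P}^N$ from $G^{\tau_0}_{\tau,k}$ to $B^\infty_\tau$ that you flagged as the only point to check.
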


In particular if $g\in H^1$, then $\overline{g-g(0)}\in\ker\mathcal{P}$, and therefore we have:

\begin{cor} 
If $f,g\in H^1$ satisfy $\varphi=\bar g/f\in \Gamma_{\tau}$ and $0\notin\varphi(\bB)$,
 then $f,g\in B^\infty_\tau$. 
\end{cor}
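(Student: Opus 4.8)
The plan is to reduce the statement to the previous corollary. Suppose $f,g \in H^1$ with $\varphi = \bar g / f \in \Gamma_\tau$ and $0 \notin \varphi(\bB)$. First I would observe that since $g \in H^1$, the function $\overline{g - g(0)}$ lies in $\ker \mathcal{P}$: indeed, $\mathcal{P}$ is the Cauchy projection, and conjugates of holomorphic functions vanishing at the origin are orthogonal to the Hardy space (their Fourier/Taylor coefficients are supported on the ``wrong'' multi-indices). Hence $\bar g = \overline{g(0)} + \bigl(\overline{g-g(0)}\bigr)$, where $\overline{g(0)}$ is a constant, so $\bar g \in \C + \ker\mathcal{P}$. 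Writing $\varphi f = \bar g$, we then get $\varphi f \in \C + \ker\mathcal{P} \subset \Gamma_\tau + \ker\mathcal{P}$ (constants belong to every Lipschitz-Zygmund space $\Gamma_\tau$).

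Next, I would feed this into the previous corollary with $N = 0$: since $\Gamma_\tau$ is contained in the restriction to $\bB$ of $G^{\tau_0}_{\tau,k}$ (stated in the excerpt: ``their restrictions to $\bB$ contain the space $\Gamma_\tau$''), we have $\varphi \in G^{\tau_0}_{\tau,k}$ (up to identifying $\varphi$ with a suitable extension) and $\varphi f \in G^{\tau_0}_{\tau,k} + \ker\mathcal{P}^0$. With $0 \notin \varphi(\bB)$ and $f \in H^1$, the previous corollary yields $f \in B^\infty_\tau$. For $g$, I would note that $\bar g / f \in \Gamma_\tau$ with $f$ non-vanishing-valued (in the sense $0\notin\varphi(\bB)$) forces $1/\varphi = f/\bar g \in \Gamma_\tau$ as well, since $\Gamma_\tau$ is closed under inversion away from zero (a standard fact: $\Gamma_\tau$ is an algebra and $1/\psi$ is Lipschitz-Zygmund whenever $\psi$ is and is bounded away from $0$). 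Then $\bar g = \psi' f$ with $\psi' = \varphi$, and to conclude $g \in B^\infty_\tau$ one applies the symmetric argument: $\overline{f - f(0)} \in \ker\mathcal{P}$, so $\bar f \in \C + \ker\mathcal{P}$, and $\varphi^{-1} \bar f = \bar g$ gives $\varphi^{-1} g = f$... — more cleanly, swap the roles: apply the whole argument to the pair $(g, f)$ in place of $(f, g)$, with symbol $\bar f / g = \overline{(\bar g / f)} \cdot (|f|^2/|g|^2)$; it is simpler to just note $\overline{\varphi} = g/\bar f$, so $\bar f \,\overline{\varphi} = g$ is holomorphic, and since $\overline{\varphi} \in \Gamma_\tau$ (the space is conjugation-invariant) with $0 \notin \overline{\varphi}(\bB)$, the same corollary applied to $g \in H^1$ with $\overline{\varphi} \, g = \bar f \in \C + \ker\mathcal{P}$ — wait, that is not of the form symbol times $g$; rather $\bar f$ is. The correct symmetric statement: $g \in H^1$, and $(1/\varphi) g = (f/\bar g) g$; since $|g|^2 = g \bar g$, we get $(f/\bar g) g = f g / \bar g$, which is not obviously conjugate-holomorphic. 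So I would instead directly run the previous corollary with the pair role reversed: the hypothesis $\bar g / f \in \Gamma_\tau$, $0 \notin (\bar g/f)(\bB)$ is equivalent (taking reciprocals, using that $\Gamma_\tau$ is inverse-closed) to $f / \bar g = \overline{\bar f / g} \in \Gamma_\tau$ with $0$ not in its range, hence $\bar f / g \in \Gamma_\tau$ with $0 \notin (\bar f/g)(\bB)$; applying the already-proven part with $(f,g) \rightsquigarrow (g,f)$ gives $g \in B^\infty_\tau$.

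The main obstacle I expect is purely bookkeeping: verifying that $\varphi \in \Gamma_\tau$ with $0\notin\varphi(\bB)$ really does place $\varphi$ (or an extension of it) inside $G^{\tau_0}_{\tau,k}$ so that the previous corollary applies verbatim, and checking the inverse-closedness of $\Gamma_\tau$ under the non-vanishing hypothesis. Both are standard — $\Gamma_\tau$ being a Banach algebra under pointwise multiplication and closed under inversion away from zero — but one must make sure the constant $g(0)$ (resp. $f(0)$) is genuinely harmless, which it is since constants lie in $G^{\tau_0}_{\tau,k}$ and hence absorbing $\overline{g(0)}$ into the $G^{\tau_0}_{\tau,k}$ summand keeps $\varphi f \in G^{\tau_0}_{\tau,k} + \ker\mathcal{P}^N$. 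Once these identifications are in place, the corollary is immediate from the previous one.
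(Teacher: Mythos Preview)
Your argument for $f$ is exactly the paper's: write $\varphi f=\bar g=\overline{g(0)}+\overline{g-g(0)}\in\C+\ker\mathcal{P}\subset G^{\tau_0}_{\tau,k}+\ker\mathcal{P}$ and invoke the previous corollary (using that $\Gamma_\tau$ embeds into the restrictions of $G^{\tau_0}_{\tau,k}$). That part is fine and matches the paper verbatim.

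For $g$ you take a genuinely different route. After some detours you settle on a symmetry argument: since $\Gamma_\tau$ is conjugation-invariant and inverse-closed away from zero, $\bar f/g=\overline{1/\varphi}\in\Gamma_\tau$ with $0\notin(\bar f/g)(\bB)$, and then the already-established ``$f$-half'' applied to the pair $(g,f)$ gives $g\in B^\infty_\tau$. This is correct, but it costs you the extra ingredient of inverse-closedness of $\Gamma_\tau$ (which, as you note, is standard but does require a short inductive check on the complex-tangential derivatives when $\tau\ge 1/2$). The paper's treatment of $g$ (spelled out in the $\Lambda_\tau$ analogue, Corollary~\ref{cor:Intarg:generalized}) is shorter: once $f\in B^\infty_\tau$ is known, its boundary values lie in $\Gamma_\tau$, so $\bar g=f\varphi\in\Gamma_\tau$ by the algebra property, hence $g\in\Gamma_\tau$, hence $g\in B^\infty_\tau$. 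This avoids rerunning the Toeplitz machinery and needs only closure under multiplication, not inversion. Either way works; the paper's is one line, yours is a clean symmetric reduction once the side facts about $\Gamma_\tau$ are in hand.
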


 The preceding result generalizes Corollary~{\ref{cor:Intarg},} and extends~\cite[Corollary 3.2]{KD}
 to dimension $n>1$.

%

The paper is organized as follows. 
In Section~2 we  state some  properties of  spaces considered in this paper and we also recall some integral
representation formulas used in the proof of the main theorems. 

 In Section~3 we state our main technical theorem (Theorem~{\ref{thm:kerT1}}) from which we deduce 
Theorems~{\ref{thm:kerNInt}} and~{\ref{thm:ker0Int}} and its corollaries. 
We also construct some counterexamples.  Finally,
 Theorem~{\ref{thm:kerT1}} is proved in Section~4.


\section{Preliminaries}


\subsection{Notations}
Throughout the paper, the letter $C$ will denote a positive constant, which may vary from place to
 place. The notation $f(z)\lesssim g(z)$ means that there exists $C>0$, which does not depend on $z$, $f$ and $g$,
 such that $f(z)\le C g(z)$. We write $f(z)\approx g(z)$ when $f(z)\lesssim g(z)$ and $g(z)\lesssim f(z)$.

Let $\p_j:=\frac{\p\,}{\p z_j}$, for $j=1,\dots,n$.
 For any multiindex $\alpha$, i.e. $\alpha=(\alpha_1,\cdots,\alpha_n)\in\N^n$,
 where $\N$ is the set of non-negative integers, let $|\alpha|:=\sum_{j=1}^{n}\alpha_j$ and 
 $\p_\alpha:=\frac{\p^{|\alpha|}}{\p z^\alpha}=\p_1^{\alpha_1}\cdots\p_n^{\alpha_n}$.
 We write   $|\p^k\varphi|:=\sum_{|\alpha|=k}|\p_\alpha\varphi|$ and 
            $|d^k\varphi|:=\sum_{|\alpha|+|\beta|=k}|\p_\alpha\bar\p_\beta\varphi|$.
When $n>1$, we also consider the complex tangential differential operators
 $\mathcal{D}_{i,j}:=\bar z_i\p_j-\bar z_j\p_i$ and
 $\displaystyle{|\p_T\varphi|:=\sum_{1\le i<j\le n}|\mathcal{D}_{i,j}\varphi|}$.
 For $n=1$, we write $|\p_T\varphi|:=0$. 

We also introduce the following two functions which  will be used in the definition of the spaces $G^{\tau_0}_{\tau,k}$.

For $\varphi\in \mathcal{C}^1(\B)$ , let 
\begin{equation}\label{eqn:def:pphi}
\pphi(z):=(1-|z|^2)|\bar\p \varphi(z)|+(1-|z|^2)^{1/2}|\bar\p_T \varphi(z)|.
\end{equation}

For $t\in\R$, let $\omega_t$ be the function on $\B$ defined by
\begin{equation} \label{eqn:omega}
\omega_t(z):=\left\{
\begin{array}{ll}
  (1-|z|^2)^{\min(t,0)},& \mbox{if $t\ne0$,}\\                    
  \log\frac{e}{1-|z|^2},& \mbox{if $t=0$.}
\end{array}\right.
\end{equation}


\subsection{Holomorphic Besov spaces}

Let $1\le p<\infty$, $k\in\N$ and $\delta>0$. The weighted Sobolev space $L^p_{k,\delta}$ is the completion of the space $\mathcal{C}^\infty(\bar \B)$, endowed with the norm
$$
\|\psi\|_{L^p_{k,\delta}}:=\Big\{\sum_{j=0}^k
             \int_{\B}|d^j \psi(z)|^p(1-|z|^2)^{\delta p-1}d\nu(z)\Big\}^{1/p}.
$$
When $k=0$, we will just write $L_\delta^p=L^p_{0,\delta}$.
We extend this definition to the case $p=\infty$, so that
 $L^\infty_{k,\delta}$ is the subspace of functions $\psi$ in the Sobolev space $L^1_{k,\delta+1}$
 satisfying 
$$
\|\psi\|_{L^\infty_{k,\delta}}:=\sum_{j=0}^k\sup_{z\in \B}
                             |d^j \psi(z)|(1-|z|^2)^{\delta}<\infty.
$$

If $1\le p\le\infty$ and $s\in\R$, the holomorphic Besov space $B^p_s$ is defined to be
 $B^p_s:=H\cap L^p_{k,k-s}$, for some $k\in\N$, $k>s$.
 It is well known that  $\|\cdot\|_{L^p_{m,m-s}}$ and $\|\cdot\|_{L^p_{k,k-s}}$
 are equivalent norms on $B^p_s$, for any $m,k\in\N$, $m,k>s$.

Note that if $s=0$, then $B^\infty_0$ is the Bloch space and if $s>0$ then $B^\infty_s$ coincides with the space of holomorphic functions on $\B$ whose boundary values are in the corresponding Lipschitz-Zygmund space $\Lambda_s$
 (see the next subsection for the precise definitions of these last two spaces).

\begin{prop}[{\cite[Theorems 5.13,14]{Bea-Bur}}]    \label{prop:embed}
 Let $1\le p\le q\le \infty$ and let $s,t\in\R$. Then:
\begin{enumerate}
	\item \label{item:embed2} If $s>t$, then $B^p_s\subset B^p_t$.
	\item For any $\e>0$, $B^1_0\subset H^1\subset B^1_{-\e}$.
	\item \label{item:embed3} If $s-n/p=t-n/q$, then $B^1_{s+n/p'}\subset B^p_s\subset B^q_t\subset B^\infty_{s-n/p}$.
\end{enumerate}
\end{prop}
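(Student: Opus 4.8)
The plan is to prove the three items separately: (i) is immediate from monotonicity of the weights, (ii) is an easy computation using elementary $H^1$ estimates, and (iii)---the only substantial one---is a Sobolev-type embedding that I would reduce to the embedding theorem for weighted Bergman spaces. For (i), I would fix an integer $k>s$ (hence also $k>t$); since $d^jf=\p^jf$ for holomorphic $f$, the $L^p_{k,k-t}$-norm of $f$ is comparable to $\bigl(\sum_{j=0}^k\int_\B|\p^jf(z)|^p(1-|z|^2)^{(k-t)p-1}\,d\nu(z)\bigr)^{1/p}$, and writing $(1-|z|^2)^{(k-t)p-1}=(1-|z|^2)^{(k-s)p-1}(1-|z|^2)^{(s-t)p}\le(1-|z|^2)^{(k-s)p-1}$ (using $s>t$ and $1-|z|^2\le1$) gives $\|f\|_{B^p_t}\lesssim\|f\|_{B^p_s}$ at once, the same $k$ serving for both norms by the norm equivalence for different orders recorded in the text.

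For (ii), the inclusion $H^1\subset B^1_{-\e}$ is elementary: taking $k=0$ in the definition, $\|f\|_{B^1_{-\e}}\approx\int_\B|f(z)|(1-|z|^2)^{\e-1}\,d\nu(z)=c\int_0^1 M_1(f,r)(1-r^2)^{\e-1}r^{2n-1}\,dr\le C_\e\|f\|_{H^1}$, where $M_1(f,r):=\int_\bB|f(r\z)|\,d\sigma(\z)\le\|f\|_{H^1}$ and $\e>0$ makes the $r$-integral finite. For $B^1_0\subset H^1$ I would take $k=1$: membership in $B^1_0$ gives $\int_\B|Rf|\,d\nu<\infty$ ($R$ the radial derivative), and since $\frac{d}{dr}M_1(f,r)\le\frac1r M_1(Rf,r)$ with $M_1(Rf,r)=O(r)$ near $0$, one gets $\sup_r M_1(f,r)\le|f(0)|+\int_0^1 M_1(Rf,t)\,\frac{dt}{t}\lesssim|f(0)|+\int_\B|Rf|\,d\nu\lesssim\|f\|_{B^1_0}$, so $f\in H^1$.

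For (iii), the point is that $1\le p\le q\le\infty$ together with $s-n/p=t-n/q$ make this a single embedding $B^p_s\hookrightarrow B^q_t$, with $p=1$ and $q=\infty$ merely displayed at the two endpoints. I would fix $m\in\N$ with $m>s\ge t$ and use the standard description of $\|g\|_{B^p_s}$ as (finitely many Taylor coefficients of $g$ at $0$) plus $\|\p^mg\|_{A^p_\alpha}$, where $A^p_\alpha:=H\cap L^p\bigl(\B,(1-|z|^2)^\alpha d\nu\bigr)$ and $\alpha:=(m-s)p-1$, and likewise for $B^q_t$ with $\beta:=(m-t)q-1$; a direct computation shows $s-n/p=t-n/q$ is the same as $\frac{\alpha+n+1}{p}=\frac{\beta+n+1}{q}$. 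Since these finitely many coefficients contribute identically to both norms, it suffices to prove the weighted Bergman embedding $A^p_\alpha\hookrightarrow A^q_\beta$ under that relation. For this I would invoke the sub-mean-value inequality over Bergman balls $D(z)$ (on which $1-|w|^2\approx1-|z|^2$ and $\nu(D(z))\approx(1-|z|^2)^{n+1}$) to obtain the pointwise bound $|h(z)|\lesssim(1-|z|^2)^{-(n+1+\alpha)/p}\|h\|_{A^p_\alpha}$, which already settles $q=\infty$; for $q<\infty$ I would write $|h|^q=|h|^{q-p}|h|^p$, estimate the first factor by this bound, and observe that the resulting weight exponent $\beta-(n+1+\alpha)(q-p)/p$ equals $\alpha$ precisely when $\frac{\alpha+n+1}{p}=\frac{\beta+n+1}{q}$, which yields $\|h\|_{A^q_\beta}\lesssim\|h\|_{A^p_\alpha}$.

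The main obstacle I anticipate is bookkeeping rather than depth: passing from the $d^k$-norm in the definition of $L^p_{k,k-s}$ to the cleaner top-order (or fractional radial) derivative description of $B^p_s$, modulo finitely many Taylor coefficients at the origin, so that part (iii) collapses to the elementary Bergman-space exponent computation above. All the ingredients---the norm equivalences recorded in the text, the sub-mean-value inequality over Bergman balls, and the resulting pointwise growth bound---are classical and can be quoted (cf.\ \cite{Bea-Bur}).
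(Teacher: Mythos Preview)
The paper does not give a proof of this proposition: it is stated with an explicit citation to \cite[Theorems 5.13,14]{Bea-Bur} and used as a black box thereafter. So there is no ``paper's own proof'' to compare against---the authors simply quote the result.

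Your sketch is correct and more than the paper supplies. Part (i) is exactly the weight-monotonicity argument one would give. Part (ii) is a clean direct computation; the only slightly delicate point is the step $\sup_r M_1(f,r)\lesssim |f(0)|+\int_0^1 M_1(Rf,t)\,\tfrac{dt}{t}$, which works because $Rf(0)=0$ handles the behaviour at $t=0$ and the integrals near $t=1$ are comparable to $\int_\B|Rf|\,d\nu$. Part (iii) is the standard reduction to the weighted Bergman embedding $A^p_\alpha\hookrightarrow A^q_\beta$ under $(\alpha+n+1)/p=(\beta+n+1)/q$, and your exponent bookkeeping (including the $q=\infty$ endpoint via the sub-mean-value pointwise bound) is correct. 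The one ingredient you invoke that is not spelled out in the present paper---replacing the full $L^p_{m,m-s}$-norm by finitely many Taylor coefficients plus the top-order term $\|\p^m g\|_{A^p_\alpha}$---is indeed part of the machinery in \cite{Bea-Bur}, so citing it there is appropriate.
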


\subsection{The space $G_{\tau,k}^{\tau_0}$}\label{sec:G}
 In this section we define the spaces $G_{\tau,k}^{\tau_0}$ and we state some of their main properties.


\begin{defn} \label{def:G}
Let $0<\tau_0\le \tau$, $\tau_0< 1/2$, and let $k>\tau$ be an integer. The space $G^{\tau_0}_{\tau,k}$ consists of all functions $\varphi\in \mathcal{C}^{k}(\B)\cap \mathcal{C}(\bar \B)$ satisfying
$$
\|\varphi\|_{G^{\tau_0}_{\tau,k}} :=\sum_{j=0}^k\sup_{z\in \B} 
	\frac{|\p^j \varphi(z)|}{\omega_{\tau-j}(z)}+\sup_{z\in \B}\frac{\pphi(z)}{(1-|z|^2)^{\tau_0}}<\infty,
$$
\end{defn}
 

 Note that $H\cap G^{\tau_0}_{\tau,k}=B^\infty_\tau$, and $G^{\tau_0}_{\tau,k}\cdot L^p_\delta\subset L^p_\delta$, since $G^{\tau_0}_{\tau,k}\subset L^{\infty}(\B)$.

The following embedding is a consequence of the definition of $G^{\tau_0}_{\tau,k}$ and the fact that  $(1-|z|^2)^s\le (1-|z|^2)^t$ and $\omega_s(z)\lesssim \omega_t(z)$, if $s>t$.

\begin{lem} \label{lem:embedG}  
$G^{\tau_0}_{\tau,k}\subset G^{\vartheta_0}_{\vartheta,m}$, provided that $\vartheta_0\le\tau_0$,
 $\vartheta\le\tau$ and $m\le k$.
\end{lem}

In order to obtain multiplicative properties of the spaces $G^{\tau_0}_{\tau,k}$,   we first state some properties of the function $\omega_t$.

\begin{lem}\label{lem:poteomegas}
Let $a,b\in\R$. Then 
$$
(1-|z|^2)^a\,\omega_b(z) \lesssim (1-|z|^2)^c,
$$
for every $c\in\R$ such that $c<a$ and $c\le a+b$.
\end{lem}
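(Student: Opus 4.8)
The plan is to reduce the inequality to a one–variable statement by writing $r:=1-|z|^2\in(0,1]$, so that the claim $(1-|z|^2)^a\,\omega_b(z)\lesssim(1-|z|^2)^c$ becomes $r^a\,\widehat\omega_b(r)\lesssim r^c$, where $\widehat\omega_b(r)$ equals $1$ if $b>0$, $r^b$ if $b<0$, and $\log(e/r)$ if $b=0$, according to the three branches of \eqref{eqn:omega}. I would then argue case by case on the sign of $b$, the only non-completely-trivial ingredient being the elementary observation that, for $\eps>0$, the function $r\mapsto r^\eps\log(e/r)$ is bounded on $(0,1]$ (it extends continuously to $[0,1]$ with value $0$ at $r=0$); I would record this, together with the monotonicity fact that $r^s\le1$ on $(0,1]$ whenever $s\ge0$, at the start.

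\emph{Case $b>0$.} Here $\widehat\omega_b(r)=1$, so the assertion is $r^a\lesssim r^c$, i.e.\ $r^{a-c}\lesssim1$, which holds on $(0,1]$ because $a-c>0$ by hypothesis; note that the second condition $c\le a+b$ is automatic since $c<a<a+b$. \emph{Case $b<0$.} Here $\widehat\omega_b(r)=r^b$, so the assertion is $r^{a+b}\lesssim r^c$, i.e.\ $r^{a+b-c}\lesssim1$, which holds on $(0,1]$ because $a+b-c\ge0$ by hypothesis (and in this range $c\le a+b<a$, so the first hypothesis is redundant). \emph{Case $b=0$.} Here $\widehat\omega_b(r)=\log(e/r)$, so the assertion is $r^a\log(e/r)\lesssim r^c$, i.e.\ $r^{a-c}\log(e/r)\lesssim1$; setting $\eps:=a-c>0$ (legitimate since $c<a$) this is exactly the boundedness of $r^\eps\log(e/r)$ recorded above, and the condition $c\le a+b=a$ again follows from $c<a$.

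Combining the three cases gives the lemma. There is essentially no obstacle here: the content is just the behaviour of powers of $r$ on $(0,1]$ and the absorption of the logarithmic factor $\log(e/r)$ by an arbitrarily small power $r^\eps$. If a streamlined write-up is preferred, one can unify the cases by observing that in all three $\widehat\omega_b(r)\lesssim r^{\min(b,0)}\log(e/r)$ and then applying the log-absorption bound once, but the case analysis above is the most transparent.
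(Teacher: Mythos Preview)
Your proof is correct and follows essentially the same approach as the paper's: both reduce to the one-variable quantity $r=1-|z|^2$ and separate the power case from the logarithmic case, absorbing the logarithm by the strict inequality $c<a$. The only cosmetic difference is that the paper merges your cases $b>0$ and $b<0$ into a single line via $(1-|z|^2)^a\omega_b(z)=(1-|z|^2)^{\min(a+b,a)}$ for $b\ne0$, whereas you split them; the content is identical.
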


\begin{proof}
 Just note that
$$
(1-|z|^2)^a\,\omega_b(z) =
\left\{
\begin{array}{ll}
(1-|z|^2)^{\min(a+b,a)},           & \mbox{if $b\ne 0$} \\
(1-|z|^2)^a \log\frac{e}{1-|z|^2}, & \mbox{if $b= 0$}
\end{array}
\right\} \lesssim (1-|z|^2)^c,
$$
for every $c\in\R$ such that $c<a$ and $c\le a+b$.
\end{proof}

\begin{lem}\label{lem:prodomegas} 
Let $\vartheta,\tau>0$, $k\in\R$ and $m\in\Z$ such that $m\ge0$. Then
$$
S^{\vartheta,\tau}_{m,k}:=
\sum_{i=0}^{m} \omega_{\vartheta-i}\,\omega_{\tau+i-k}
\lesssim \omega_{\vartheta-m}+\omega_{\tau-k}.
$$
\end{lem}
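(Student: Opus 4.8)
The plan is to prove the estimate
$$
S^{\vartheta,\tau}_{m,k}=\sum_{i=0}^{m}\omega_{\vartheta-i}\,\omega_{\tau+i-k}\lesssim\omega_{\vartheta-m}+\omega_{\tau-k}
$$
by estimating each product $\omega_{\vartheta-i}\,\omega_{\tau+i-k}$ separately and summing the finitely many terms. First I would record the two elementary facts that drive everything: (a) if $s\le t$ then $\omega_s(z)\lesssim\omega_t(z)$ on $\B$ (since $0<1-|z|^2\le1$ and the logarithm is bounded by any negative power up to a constant); and (b) a product rule for $\omega$, namely $\omega_a\,\omega_b\lesssim\omega_{a+b}$ when $a,b$ are not both zero, and $\omega_0\,\omega_0=(\log\frac{e}{1-|z|^2})^2\lesssim\omega_{-\e}$ for any $\e>0$ (in particular $\omega_0\omega_0\lesssim\omega_{-1}$, which is all we need here). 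Both follow at once from the definition \eqref{eqn:omega} and Lemma~\ref{lem:poteomegas} with $a=0$.

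The main step is then a case analysis on the index $i$ in the sum, comparing $\vartheta-i$ with $\tau+i-k$. For a fixed $i\in\{0,\dots,m\}$, apply the product rule (b): $\omega_{\vartheta-i}\,\omega_{\tau+i-k}\lesssim\omega_{\vartheta+\tau-k}$ unless $\vartheta-i=\tau+i-k=0$, in which case it is $\lesssim\omega_{\vartheta+\tau-k-1}=\omega_{-1}$. In the first case, since $i\le m$ we have $\vartheta+\tau-k\le(\vartheta-m)+(\tau+m-k)\le\max(\vartheta-m,\tau-k)$ is \emph{not} quite immediate; instead note $\vartheta+\tau-k=(\vartheta-i)+(\tau+i-k)$ and at least one of the two summands is $\le$ its corresponding extreme value: if $i\le m$ then $\tau+i-k\le\tau+m-k$ but we want comparison with $\tau-k$ or $\vartheta-m$. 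The cleaner route is: if $\vartheta-i\le\vartheta-m$ fails (it always fails for $i<m$), use instead that one of $\vartheta-i\le\vartheta-0=\vartheta$ and we compare $\omega_{\vartheta+\tau-k}$ with $\omega_{\vartheta-m}+\omega_{\tau-k}$ by checking $\vartheta+\tau-k\le\max(\vartheta-m,\tau-k)$ whenever $0\le i\le m$ — which holds because $\vartheta+\tau-k-( \tau-k)=\vartheta\ge\vartheta-m$ handles $i=m$ and ... Let me instead organize it as the two genuinely distinct regimes below.

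Concretely: \textbf{(1)} If $\tau-k\ge\vartheta-m$, I claim $\omega_{\vartheta-i}\,\omega_{\tau+i-k}\lesssim\omega_{\tau-k}$ for every $i$. Indeed $\omega_{\vartheta-i}$ is bounded when $\vartheta-i\ge0$, hence the product is $\lesssim\omega_{\tau+i-k}\lesssim\omega_{\tau-k}$ when $i\ge0$ and $\vartheta-i\ge 0$; and when $\vartheta-i<0$, write $\omega_{\vartheta-i}\omega_{\tau+i-k}\lesssim\omega_{\vartheta+\tau-k}$ (product rule, not both zero since the first exponent is negative) and $\vartheta+\tau-k\ge\tau-k$ is false — so here one uses $\vartheta+\tau-k=(\vartheta-i)+(\tau+i-k)$ with $\vartheta-i<0$ giving $\vartheta+\tau-k<\tau+i-k\le\tau-k+m$, still not $\le\tau-k$. \textbf{(2)} Symmetrically with the roles of $\vartheta-m$ and $\tau-k$ exchanged. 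The honest difficulty, and the step I expect to be the main obstacle, is precisely this bookkeeping: one must show that for each $i\in\{0,\dots,m\}$ the exponent produced, namely $\min(\vartheta+\tau-k,\ \vartheta-i,\ \tau+i-k)$ (taking the correct branch of the product rule), is always $\le\max(\vartheta-m,\tau-k)$ — equivalently $\ge\omega$ of that max — so that (a) finishes the term. This is a purely arithmetic claim about the line: the function $i\mapsto\min(\vartheta-i,\tau+i-k)$ on $[0,m]$ attains its minimum at an endpoint or where the two are equal, and at the endpoints it equals $\vartheta-m$ (at $i=m$, via the first slot) or $\tau-k$ (at $i=0$, via the second slot), while at an interior crossing point both slots vanish forcing the $\omega_0\omega_0\lesssim\omega_{-1}$ branch, and $-1\le\max(\vartheta-m,\tau-k)$ may fail — but then $\vartheta-i=\tau+i-k=0$ forces $\vartheta-m=\vartheta-i-(m-i)=-(m-i)\le 0$ and $\tau-k=\tau+i-k-i=-i\le0$, and since $i$ is an integer with $0\le i\le m$, not both $i=0$ and $m-i=0$ unless $m=0$; hence $\max(\vartheta-m,\tau-k)\le -1$ when $m\ge1$, giving $\omega_{-1}\lesssim\omega_{\max(\vartheta-m,\tau-k)}$ as needed, and when $m=0$ the sum is the single term $\omega_\vartheta\omega_{\tau-k}$, trivially $\lesssim\omega_{\vartheta-m}+\omega_{\tau-k}$. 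Summing the at most $m+1$ estimated terms and absorbing the constant completes the proof.
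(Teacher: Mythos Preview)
Your overall plan --- bound each term $\omega_{\vartheta-i}\,\omega_{\tau+i-k}$ separately by $\omega_{\vartheta-m}$ or $\omega_{\tau-k}$ --- is exactly right, and is what the paper does. But the execution has real errors, not just messiness.

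First, both of your ``elementary facts'' are stated incorrectly. For (a), the monotonicity goes the other way: since $\omega_t(z)=(1-|z|^2)^{\min(t,0)}$ for $t\ne0$, one has $\omega_s\lesssim\omega_t$ when $s\ge t$, not when $s\le t$. More seriously, your product rule (b), $\omega_a\,\omega_b\lesssim\omega_{a+b}$, is false in general: take $a=1$, $b=-2$, so that $\omega_a\omega_b=(1-|z|^2)^{-2}$ while $\omega_{a+b}=(1-|z|^2)^{-1}$, and the inequality fails. The correct identity is $\omega_a\omega_b\approx(1-|z|^2)^{\min(a,0)+\min(b,0)}$ (up to logarithms), which you do seem to use implicitly later, but the stated rule leads you astray in the middle paragraphs.

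Second, the final paragraph has the comparison backwards. To get $\omega_c\lesssim\omega_{\vartheta-m}+\omega_{\tau-k}$ you need the effective exponent $c$ to satisfy $c\ge\min(\vartheta-m,\tau-k)$ (so that $\omega_c$ is dominated by the larger of the two $\omega$'s), not $c\le\max(\vartheta-m,\tau-k)$ as you write. Consequently your handling of the $\omega_0\omega_0$ case is also off: you claim $\max(\vartheta-m,\tau-k)\le-1$ when $m\ge1$, but in fact if $i=m=\vartheta$ then $\vartheta-m=0$ and the max is $0$. What actually saves the day is that $\tau-k=-i\le-1$ (since $i=\vartheta>0$ is an integer in that case), so $\omega_0^2\lesssim(1-|z|^2)^{\tau-k}=\omega_{\tau-k}$.

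The paper avoids all of this arithmetic by splitting directly on the signs of the two exponents. If $\tau+i-k>0$ then $\omega_{\tau+i-k}=1$ and the term is $\omega_{\vartheta-i}\lesssim\omega_{\vartheta-m}$ (as $i\le m$). If $\vartheta-i>0$ then similarly the term is $\lesssim\omega_{\tau-k}$. In the remaining cases ($i\ge\vartheta$ and $i\le k-\tau$) one has $\tau-k\le-\vartheta<0$, so $\omega_{\tau-k}(z)=(1-|z|^2)^{\tau-k}$, and a direct application of Lemma~\ref{lem:poteomegas} (or, when $i=\vartheta=k-\tau$, the trivial bound $(\log)^2\lesssim(1-|z|^2)^{-\vartheta}$) gives the term $\lesssim\omega_{\tau-k}$. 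This sign-based case split is both shorter and sidesteps the bookkeeping that tripped you up.
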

\begin{proof}
We estimate the different products $\omega_{\vartheta-i}\,\omega_{\tau+i-k}$ as follows:\vspace*{5pt}

\noindent
$\bullet$ If $i>k-\tau$, then $\omega_{\vartheta-i}\,\omega_{\tau+i-k}=\omega_{\vartheta-i}\lesssim \omega_{\vartheta-m}$,
since $\vartheta-m\le\vartheta-i$.
\vspace*{5pt}

\noindent
$\bullet$ If $i<\vartheta$, then $\omega_{\vartheta-i}\,\omega_{\tau+i-k}=\omega_{\tau+i-k}\lesssim \omega_{\tau-k}$,
since $\tau-k\le\tau+i-k$.
\vspace*{5pt}

\noindent
$\bullet$ If $i>\vartheta$ and $i\le k-\tau$, then
$$
\omega_{\vartheta-i}(z)\omega_{\tau+i-k}(z)
=(1-|z|)^{\vartheta-i}\omega_{\tau+i-k}(z)\lesssim (1-|z|)^{\tau-k}=\omega_{\tau-k}(z),
$$
 by Lemma~{\ref{lem:poteomegas}}, since $\tau-k<\tau-k+\vartheta=(\vartheta-i)+(\tau+i-k)$
 and $\tau-k\le -i<-\vartheta<0$.
\vspace*{5pt}

\noindent
$\bullet$ If $i=\vartheta$ and $i<k-\tau$, then
$$
\omega_{\vartheta-i}(z)\omega_{\tau+i-k}(z)=(1-|z|)^{\tau+i-k}\omega_0(z)\lesssim (1-|z|)^{\tau-k}=\omega_{\tau-k}(z),
$$
 by Lemma~{\ref{lem:poteomegas}}, since $\tau-k<\tau-k+\vartheta=\tau+i-k<0$.
\vspace*{5pt}

\noindent
$\bullet$  If $\vartheta=i=k-\tau$, then
$$
\omega_{\vartheta-i}(z)\omega_{\tau+i-k}(z)=\omega_0(z)^2\lesssim (1-|z|)^{\tau-k}=\omega_{\tau-k}(z),
$$
since $\tau-k=-\vartheta<0$.
\end{proof}

\begin{prop} \label{prop:prodGs}
$$
\|\varphi\psi\|_{G^{\vartheta_0}_{\vartheta,m}}\lesssim
\|\varphi\|_{G^{\tau_0}_{\tau,k}}\|\psi\|_{G^{\vartheta_0}_{\vartheta,m}}
\qquad(\varphi\in G^{\tau_0}_{\tau,k},\,\psi\in G^{\tau_0}_{\vartheta,m}),
$$
provided that  $\vartheta_0\le\tau_0$, $\vartheta\le\tau$ and $m\le k$.
\end{prop}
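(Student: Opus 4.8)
The plan is to prove the estimate by differentiating the product $\varphi\psi$ using the Leibniz rule and then bounding each of the two terms in the $G^{\vartheta_0}_{\vartheta,m}$-norm separately: the sum over $j=0,\dots,m$ of $\sup_z|\p^j(\varphi\psi)(z)|/\omega_{\vartheta-j}(z)$, and the term $\sup_z\widetilde{\varphi\psi}(z)/(1-|z|^2)^{\vartheta_0}$. First I would apply Lemma~\ref{lem:embedG} to reduce to the situation where $\varphi$ is measured in $G^{\tau_0}_{\tau,k}$ and $\psi$ in $G^{\vartheta_0}_{\vartheta,m}$ with the hypotheses $\vartheta_0\le\tau_0$, $\vartheta\le\tau$, $m\le k$ already built in, and I would freely use that $G^{\tau_0}_{\tau,k}\subset L^\infty(\B)$.

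For the holomorphic-derivative part, the Leibniz rule gives $|\p^j(\varphi\psi)|\lesssim\sum_{i=0}^{j}|\p^i\varphi|\,|\p^{j-i}\psi|$, so by the definitions of the two norms,
$$
\frac{|\p^j(\varphi\psi)(z)|}{\omega_{\vartheta-j}(z)}
\lesssim\|\varphi\|_{G^{\tau_0}_{\tau,k}}\|\psi\|_{G^{\vartheta_0}_{\vartheta,m}}\,
\frac{1}{\omega_{\vartheta-j}(z)}\sum_{i=0}^{j}\omega_{\tau-i}(z)\,\omega_{\vartheta-(j-i)}(z).
$$
Now I would invoke Lemma~\ref{lem:prodomegas}, with its roles of $\vartheta,\tau$ suitably matched (here the "$\vartheta$" of the lemma is our $\vartheta$, the "$\tau$" is our $\tau$, $m$ is replaced by $j$ and $k$ by $j$), to conclude $\sum_{i=0}^{j}\omega_{\tau-i}\,\omega_{\vartheta+i-j}\lesssim\omega_{\tau-j}+\omega_{\vartheta-j}$; since $\tau\ge\vartheta$ implies $\omega_{\tau-j}\lesssim\omega_{\vartheta-j}$, the whole quotient is $\lesssim\|\varphi\|_{G^{\tau_0}_{\tau,k}}\|\psi\|_{G^{\vartheta_0}_{\vartheta,m}}$, uniformly in $z$, which handles $j=0,\dots,m\le k$.

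For the remaining term I would expand $\bar\p(\varphi\psi)=\bar\p\varphi\cdot\psi+\varphi\cdot\bar\p\psi$ and similarly $\mathcal{D}_{i,j}(\varphi\psi)=\mathcal{D}_{i,j}\varphi\cdot\psi+\varphi\cdot\mathcal{D}_{i,j}\psi$, so that, by \eqref{eqn:def:pphi}, $\widetilde{\varphi\psi}\lesssim\|\psi\|_{L^\infty}\,\pphi+\|\varphi\|_{L^\infty}\,\ppsi$. Dividing by $(1-|z|^2)^{\vartheta_0}$ and using $\pphi(z)\lesssim\|\varphi\|_{G^{\tau_0}_{\tau,k}}(1-|z|^2)^{\tau_0}\le\|\varphi\|_{G^{\tau_0}_{\tau,k}}(1-|z|^2)^{\vartheta_0}$ (since $\tau_0\ge\vartheta_0$ and $1-|z|^2\le1$), together with $\ppsi(z)\lesssim\|\psi\|_{G^{\vartheta_0}_{\vartheta,m}}(1-|z|^2)^{\vartheta_0}$ and the fact that the $L^\infty$ norms are controlled by the corresponding $G$-norms, gives the desired bound on this term too. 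The only point requiring any care — and the one I would flag as the main technical obstacle — is the bookkeeping in the first part: making sure the hypotheses of Lemma~\ref{lem:prodomegas} are met for every $j\le m$ (in particular that $j$ is a nonnegative integer and $\vartheta,\tau>0$), and that the resulting bound $\omega_{\tau-j}+\omega_{\vartheta-j}$ collapses to $\omega_{\vartheta-j}$ precisely because $\vartheta\le\tau$; everything else is a routine application of the Leibniz rule and the definitions. Adding the two parts yields $\|\varphi\psi\|_{G^{\vartheta_0}_{\vartheta,m}}\lesssim\|\varphi\|_{G^{\tau_0}_{\tau,k}}\|\psi\|_{G^{\vartheta_0}_{\vartheta,m}}$, completing the proof.
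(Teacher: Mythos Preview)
Your proposal is correct and follows essentially the same argument as the paper's proof: Leibniz rule plus Lemma~\ref{lem:prodomegas} for the $\partial^j$-part (the paper writes the resulting sum directly as $S^{\vartheta,\tau}_{l,l}$ and then invokes the lemma), and the pointwise product inequality $\widetilde{\varphi\psi}\le|\varphi|\,\widetilde{\psi}+|\psi|\,\widetilde{\varphi}$ for the second part. Your preliminary appeal to Lemma~\ref{lem:embedG} is harmless but not needed, since the estimate already uses $\|\psi\|_{G^{\vartheta_0}_{\vartheta,m}}$ on the right.
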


\begin{proof} If $\alpha\in\N^n$, $|\alpha|=l\le m$, then
\begin{eqnarray*}
|\p_\alpha(\varphi\psi)|
&\lesssim& \sum_{\beta+\gamma=\alpha}|\p_\beta\varphi||\p_\gamma\psi|
\lesssim \|\varphi\|_{G^{\tau_0}_{\tau,k}}\|\psi\|_{G^{\vartheta_0}_{\vartheta,m}}S^{\vartheta,\tau}_{l,l}\\
&\lesssim& \|\varphi\|_{G^{\tau_0}_{\tau,k}}\|\psi\|_{G^{\vartheta_0}_{\vartheta,m}}\omega_{\vartheta-l},
\end{eqnarray*}
since, by Lemma~{\ref{lem:prodomegas}},
 $S^{\vartheta,\tau}_{l,l}\lesssim\omega_{\vartheta-l}+\omega_{\tau-l}\lesssim\omega_{\vartheta-l}$.
 
On the other hand,
$$
\widetilde{\varphi\psi}(z)\le|\varphi(z)|\widetilde{\psi}(z)+\widetilde{\varphi}(z)|\psi(z)|
\lesssim \|\varphi\|_{G^{\tau_0}_{\tau,k}}\|\psi\|_{G^{\vartheta_0}_{\vartheta,m}}(1-|z|^2)^{\vartheta_0},
$$
and the proof is complete.
\end{proof}
 
Our next goal is to show the connection between the spaces $G^{\tau_0}_{\tau,k}$ and both
 the non-isotropic Lipschitz-Zygmund spaces $\Gamma_\tau$ and the classical Lipschitz-Zygmund
 spaces $\Lambda_\tau$.

If $0<\tau<1$, the classical Lipschitz-Zygmund space  on $\bB$,  $\Lambda_\tau=\Lambda_{\tau}(\bB)$,
 with respect to the Euclidean metric consists of all the functions $\varphi \in\mathcal{C}(\bB)$ such that
$$
\|\varphi\|_{\Lambda_\tau}:=\|\varphi\|_{\infty}+\sup_{\substack{\z,\eta\in S\\ \z\ne\eta}} \frac{|\varphi(\z)-\varphi(\eta)|}{|\z-\eta|^\tau}<\infty.
$$
If $k$ is a positive integer and $k<\tau<k+1$, then $\Lambda_\tau=\Lambda_{\tau}(\bB)$ consists of all the
 functions $\varphi\in \mathcal{C}^k(\bB)$ such that
$$
\|\varphi\|_{\Lambda_\tau}:=\|\varphi\|_{\mathcal{C}^k}+\sum_{|\alpha|+|\beta|=k} \|\p_\alpha \bar\p_\beta\varphi\|_{\Lambda_{\tau-k}(\bB)}<\infty.
$$

When $\tau$ is a positive integer, $\Lambda_\tau$ is defined analogously by using second order differences. The spaces $\Lambda_\tau(\B)$ are defined in a similar way.
 
The main properties of the spaces $\Lambda_\tau$ can be found, for instance, in the expository paper~{\cite{Kr1}.} 

It is well known (see~{\cite[\S\,15]{Kr1})} that a continuous function 
$\varphi$ is in $\Lambda_\tau$ if and only if, for some (any) integer $k>\tau$, its harmonic extension $\Phi$ on $\B$ satisfies 
\begin{equation}\label{eqn:lipder}
\sup_{z\in \B}(1-|z|^2)^{k-\tau}|d^k\Phi(z)| <\infty.
\end{equation}

We recall that if~{\eqref{eqn:lipder}} holds for some function $\varphi\in\mathcal{C}^k(\B)$,
 then $\varphi\in\Lambda_\tau(\B)$ (see~{\cite[Theorem 15.7]{Kr1}}).
 
This fact and the estimate $|d\varphi(z)|\lesssim \|\varphi\|_{G^{\tau_0}_{\tau,k}}(1-|z|^2)^{\tau_0-1}$ give

\begin{prop} \label{prop:GLip0} $G^{\tau_0}_{\tau,k}\subset \Lambda_{\tau_0}(\B)$.
\end{prop}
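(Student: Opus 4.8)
The plan is to verify the criterion~\eqref{eqn:lipder} for the exponent $\tau_0$ and some integer larger than $\tau_0$; since $\tau_0<1/2<1$, the natural choice is $k=1$, so it suffices to show $\sup_{z\in\B}(1-|z|^2)^{1-\tau_0}|d\varphi(z)|<\infty$ for $\varphi\in G^{\tau_0}_{\tau,k}$, and then invoke the cited fact that~\eqref{eqn:lipder} for a $\mathcal{C}^1$ function forces membership in $\Lambda_{\tau_0}(\B)$ (Theorem~15.7 of~\cite{Kr1}). Here $d\varphi$ collects both $\p\varphi$ and $\bar\p\varphi$, so $|d\varphi(z)|\le|\p\varphi(z)|+|\bar\p\varphi(z)|$.

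For the holomorphic part, the term $j=1$ in the definition of $\|\varphi\|_{G^{\tau_0}_{\tau,k}}$ gives $|\p\varphi(z)|\lesssim\|\varphi\|_{G^{\tau_0}_{\tau,k}}\,\omega_{\tau-1}(z)$, and since $\tau-1<\tau_0-1$ (because $\tau\ge\tau_0$, and in fact $\tau_0<1/2$ forces $\tau-1<-1/2<\tau_0-1$ only when $\tau\le\tau_0$, but more simply $\tau_0-1<0$ and $\tau-1\le\tau_0-1$ fails in general) — I should instead argue directly: $\omega_{\tau-1}(z)=(1-|z|^2)^{\min(\tau-1,0)}$ when $\tau\ne1$, and in all cases $\omega_{\tau-1}(z)\lesssim(1-|z|^2)^{\tau_0-1}$ because $\min(\tau-1,0)\ge\tau_0-1$ (as $\tau_0<1$ gives $0>\tau_0-1$, and $\tau-1\ge\tau_0-1$ since $\tau\ge\tau_0$), with the logarithmic case $\tau=1$ absorbed by any negative power. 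Hence $(1-|z|^2)^{1-\tau_0}|\p\varphi(z)|\lesssim\|\varphi\|_{G^{\tau_0}_{\tau,k}}$.

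For the antiholomorphic part, the relevant quantity is $\widetilde\varphi(z)=(1-|z|^2)|\bar\p\varphi(z)|+(1-|z|^2)^{1/2}|\bar\p_T\varphi(z)|$, which by the last term in the $G^{\tau_0}_{\tau,k}$-norm satisfies $\widetilde\varphi(z)\lesssim\|\varphi\|_{G^{\tau_0}_{\tau,k}}(1-|z|^2)^{\tau_0}$. In particular $(1-|z|^2)|\bar\p\varphi(z)|\le\widetilde\varphi(z)\lesssim\|\varphi\|_{G^{\tau_0}_{\tau,k}}(1-|z|^2)^{\tau_0}$, so $|\bar\p\varphi(z)|\lesssim\|\varphi\|_{G^{\tau_0}_{\tau,k}}(1-|z|^2)^{\tau_0-1}$, which is exactly the estimate quoted just before the statement. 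Combining the two bounds yields $|d\varphi(z)|\lesssim\|\varphi\|_{G^{\tau_0}_{\tau,k}}(1-|z|^2)^{\tau_0-1}$, i.e.\ $(1-|z|^2)^{1-\tau_0}|d\varphi(z)|\lesssim\|\varphi\|_{G^{\tau_0}_{\tau,k}}$, which is~\eqref{eqn:lipder} with $k=1$ and exponent $\tau_0$. Since $\varphi\in\mathcal{C}^k(\B)\subset\mathcal{C}^1(\B)$, Theorem~15.7 of~\cite{Kr1} gives $\varphi\in\Lambda_{\tau_0}(\B)$, and tracking constants shows $\|\varphi\|_{\Lambda_{\tau_0}(\B)}\lesssim\|\varphi\|_{G^{\tau_0}_{\tau,k}}$, so the inclusion is continuous.

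The only delicate point is bookkeeping with $\omega_{\tau-1}$: one must check that $\min(\tau-1,0)\ge\tau_0-1$ in every case (using $\tau\ge\tau_0$ and $\tau_0<1$) and that the logarithmic factor appearing when $\tau=1$ is dominated by $(1-|z|^2)^{\tau_0-1}$; both are immediate from Lemma~\ref{lem:poteomegas} or a direct comparison. No genuine obstacle is expected — the proposition is essentially unwinding the definition of the norm and quoting the converse direction of~\eqref{eqn:lipder}.
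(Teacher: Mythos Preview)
Your proposal is correct and follows exactly the paper's approach: the paper derives Proposition~\ref{prop:GLip0} from the one-line estimate $|d\varphi(z)|\lesssim \|\varphi\|_{G^{\tau_0}_{\tau,k}}(1-|z|^2)^{\tau_0-1}$ together with the cited criterion~\eqref{eqn:lipder} (Theorem~15.7 of~\cite{Kr1}), and your argument simply unpacks that estimate by splitting $|d\varphi|\le|\p\varphi|+|\bar\p\varphi|$ and bounding each piece from the corresponding term in the $G^{\tau_0}_{\tau,k}$-norm. The self-correcting digression on comparing $\omega_{\tau-1}$ with $(1-|z|^2)^{\tau_0-1}$ is a bit tangled in the middle, but you land on the right case analysis (or Lemma~\ref{lem:poteomegas}) in the end.
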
 

We also consider the Lipschitz-Zygmund space on $\bB$ with respect to the pseudodistance $d(\z,\eta)=|1-\bar\z\eta|$,
which is denoted by $\Gamma_{\tau}(\bB)$. If $0<\tau<1/2$, this space is defined just as $\Lambda_\tau$ but replacing the Euclidean distance $|\zeta-\eta|$ by $d(\z,\eta)$. For values $\tau\ge 1/2$ the definition is given in terms of Lipschitz conditions of certain complex tangential derivatives (see \cite[pp.\,670-1]{Bo-Br-Gr} and the references therein  for the precise definitions and main properties). 


We recall that if $f\in H(\B)$ has boundary values $f^*$, then $f^*$ in $\Lambda_\tau$,
 if and only if  $f^*\in\Gamma_\tau$  (see \cite{Ste} or \cite[\S 6.4]{Ru} or \cite[\S 8.8]{Kr3} and the references therein. See also \cite[pp.\,670-1]{Bo-Br-Gr}).
 If $0<\tau<n$, the functions in  $\Gamma_\tau$ can be described in terms of their
 invariant harmonic extensions. In this case, we have that $\varphi$ is in 
$\Gamma_\tau$ if and only if, for some (any) integer $k>\tau$, its invariant harmonic extension $\Phi$ on $\B$ satisfies~{\eqref{eqn:lipder}}.
 This characterization fails to be true when $\tau\ge n$ 
(see~{\cite[Chapter 6]{Kr2}} for more details). Similarly to what happens in the holomorphic case, the complex tangential derivatives of the functions in the  space $\Gamma_\tau$ are more regular, in the sense that $\mathcal{D}_{ij}\varphi\in \Gamma_{\tau-1/2}$ for $i,j=1,\dots,n$.

The next results relate the spaces $\Lambda_\tau$ and $\Gamma_\tau$ to $G_{\tau,k}^{\tau_0}$. 

\begin{prop}\label{prop:GLip1}\blankline

\noindent
a) If $n=1$ then the harmonic extension of a function in $\Lambda_\tau$ belongs to any space $G_{\tau,k}^{\tau_0}$.

\noindent
b) If $n>1$ and  $\tau>1/2$ then every $\varphi\in\Lambda_\tau$ is the restriction of a function
 $\Phi\in G_{\tau,k}^{\tau_0}$. Namely, 
for any integer $k>\tau$, the harmonic extension $\Phi$ of $\varphi$ satisfies that:  
\begin{itemize}
\item $\Phi\in G_{\tau,k}^{\tau-1/2}$, when $1/2<\tau<1$.
\item $\Phi\in G_{\tau,k}^{\tau_0}$, for any $0<\tau_0<1/2$, when $\tau\ge 1$.
\end{itemize}
\end{prop}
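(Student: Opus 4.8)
\textbf{Proof proposal for Proposition~\ref{prop:GLip1}.}

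The plan is to verify directly that the relevant harmonic extension $\Phi$ of a function $\varphi\in\Lambda_\tau$ satisfies the two families of estimates built into the norm $\|\cdot\|_{G^{\tau_0}_{\tau,k}}$ of Definition~\ref{def:G}: first the ``full'' derivative bounds $|\p^j\Phi(z)|\lesssim\omega_{\tau-j}(z)$ for $0\le j\le k$, and then the ``tangential gain'' bound $\pphi(z)\lesssim(1-|z|^2)^{\tau_0}$, where $\pphi$ is as in~\eqref{eqn:def:pphi}. For part a) ($n=1$) there are no complex tangential directions, so $|\p_T\Phi|\equiv0$ and the second term of $\pphi$ disappears; moreover $\p^j\Phi$ is controlled by $|d^j\Phi|$, and~\eqref{eqn:lipder} together with the interpolation-type inequality controlling lower-order derivatives by higher-order ones (this is exactly the content behind the equivalence of the $L^p_{k,k-s}$ norms and the characterization~\eqref{eqn:lipder}) gives $|d^j\Phi(z)|\lesssim\omega_{\tau-j}(z)$ for all $j\le k$. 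Then $\widetilde\Phi(z)=(1-|z|^2)|\bar\p\Phi(z)|\lesssim(1-|z|^2)\,\omega_{\tau-1}(z)\lesssim(1-|z|^2)^{\tau_0}$ by Lemma~\ref{lem:poteomegas} (using $\tau_0<1/2\le\tau$, so $\tau_0<\min(\tau,1)$), which closes part a).

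For part b) ($n>1$, $\tau>1/2$) I would take the same harmonic extension $\Phi$ and argue in two stages. The isotropic derivative bounds $|\p^j\Phi(z)|\lesssim|d^j\Phi(z)|\lesssim\omega_{\tau-j}(z)$ for $0\le j\le k$ follow exactly as in part a) from~\eqref{eqn:lipder} and the lower-order interpolation. The essential new point is the tangential gain: one must show that $\bar\p_T\Phi$ picks up half a power more, i.e. $(1-|z|^2)^{1/2}|\bar\p_T\Phi(z)|\lesssim(1-|z|^2)^{\tau_0}$, which amounts to $|\bar\p_T\Phi(z)|\lesssim(1-|z|^2)^{\tau_0-1/2}$. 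Here I would invoke the well-known fact (for the scale $\Gamma_\tau$, equivalently $\Lambda_\tau$ on $\bB$, cited via \cite{Bo-Br-Gr} and the references therein) that complex tangential differentiation improves regularity by order $1/2$: since $\varphi\in\Lambda_\tau=\Gamma_\tau$ we have $\mathcal{D}_{ij}\varphi\in\Gamma_{\tau-1/2}$, and correspondingly the harmonic extension (which commutes with the relevant tangential operators up to lower-order error terms controlled by the already-established $|d^j\Phi|$ bounds) satisfies $(1-|z|^2)^{k-1/2-(\tau-1/2)}|d^{k-1}(\bar\p_T\Phi)(z)|\lesssim1$, hence $|\bar\p_T\Phi(z)|\lesssim(1-|z|^2)^{(\tau-1/2)-? }$ ; more directly, the first-order bound $|\bar\p_T\Phi(z)|\lesssim\omega_{\tau-1/2}(z)$. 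Combined with $(1-|z|^2)^{1/2}\omega_{\tau-1/2}(z)\lesssim(1-|z|^2)^{\tau_0}$, which holds by Lemma~\ref{lem:poteomegas} precisely when $\tau_0<\min(\tau,1)$ (giving $\tau_0=\tau-1/2$ admissible for $1/2<\tau<1$, and any $\tau_0<1/2$ admissible for $\tau\ge1$, matching the two bullets), this yields $\pphi(z)\lesssim(1-|z|^2)^{\tau_0}$ and completes part b).

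The main obstacle I anticipate is making rigorous the ``half-power gain'' for the harmonic (rather than invariant-harmonic or holomorphic) extension: the clean statement $\mathcal{D}_{ij}\varphi\in\Gamma_{\tau-1/2}$ lives on the sphere, and transferring it to a pointwise interior estimate on $\bar\p_T\Phi$ requires either a suitable integral representation of $\Phi$ near $\bB$ together with the known mapping properties of the Poisson-type kernel under complex tangential derivatives, or a direct estimate of $\bar\p_T$ of the Poisson integral exploiting that $\mathcal{D}_{ij}$ applied to the kernel gains a factor of size $|1-\bar w z|^{1/2}/\dots$ relative to an ordinary derivative. I would handle the commutator terms (where $\mathcal{D}_{ij}$ hits the ``$\bar w$'' coefficients and produces extra factors of $\bar z_i,\bar z_j$ times ordinary derivatives of $\Phi$) by absorbing them into the already-proved isotropic bounds $|\p^j\Phi|,|d^j\Phi|\lesssim\omega_{\tau-j}$, which are more than enough since they lose only integer powers. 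Everything else — the reductions, the bookkeeping of $\omega_t$ factors via Lemmas~\ref{lem:poteomegas} and~\ref{lem:prodomegas}, and the verification that the resulting exponents match the two bullet points — is routine.
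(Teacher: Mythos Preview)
The paper states this proposition without an explicit proof, treating it as an immediate consequence of the characterization~\eqref{eqn:lipder} together with Definition~\ref{def:G}. Your part a) is correct and is exactly the intended argument.

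Your part b) contains a genuine error and is also more complicated than necessary. You write ``since $\varphi\in\Lambda_\tau=\Gamma_\tau$'' and then invoke the half-order tangential gain $\mathcal{D}_{ij}\varphi\in\Gamma_{\tau-1/2}$. But $\Lambda_\tau$ and $\Gamma_\tau$ coincide only for holomorphic boundary values; for general functions the paper itself states that $\Gamma_\tau$ is a \emph{proper} subspace of $\Lambda_\tau$ (paragraph after the definition of $\Gamma_\tau$). So for an arbitrary $\varphi\in\Lambda_\tau$ you cannot appeal to the $\Gamma_\tau$ tangential regularity, and there is no reason to expect the Euclidean harmonic extension $\Phi$ to satisfy $|\bar\p_T\Phi|\lesssim\omega_{\tau-1/2}$.

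In fact no tangential gain is needed: the crude bound $|\bar\p_T\Phi|\lesssim|d\Phi|\lesssim\omega_{\tau-1}$ already gives the proposition. Indeed, the dominant contribution to $\pphi$ is then
\[
(1-|z|^2)^{1/2}|\bar\p_T\Phi(z)|\lesssim(1-|z|^2)^{1/2}\omega_{\tau-1}(z),
\]
and by Lemma~\ref{lem:poteomegas} (with $a=1/2$, $b=\tau-1$) this is $\lesssim(1-|z|^2)^{\tau_0}$ precisely when $\tau_0<1/2$ and $\tau_0\le\tau-1/2$. For $1/2<\tau<1$ the second constraint binds and yields exactly $\tau_0=\tau-1/2$; for $\tau\ge1$ the first constraint binds and yields any $\tau_0<1/2$. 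These are exactly the two bullets. The fact that the proposition claims only $\tau_0=\tau-1/2$ (rather than every $\tau_0<1/2$) in the range $1/2<\tau<1$ is a strong signal that the intended argument uses the trivial bound on $\bar\p_T\Phi$, not a tangential improvement.
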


\begin{cor} \label{cor:Glip1} If either $n=1$ or $n>1$ and $\tau>1/2$, then 
 every $\varphi\in\Lambda_\tau$ is the restriction of a function
 $\Phi\in G_{\tau,k}^{\tau_0}$, for some $0<\tau_0$ and for any integer $k$.
\end{cor}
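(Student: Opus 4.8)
The plan is to deduce the corollary directly from Proposition~\ref{prop:GLip1} by taking, in every case, $\Phi$ to be the harmonic extension of $\varphi$ to $\B$ and then making an explicit admissible choice of the parameter $\tau_0$. Recall that in Definition~\ref{def:G} the integer $k$ is required to exceed $\tau$, so ``for any integer $k$'' is to be understood as ``for any integer $k>\tau$''; recall also that the $G^{\tau_0}_{\tau,k}$-norm depends on $k$ only through the finite sum $\sum_{j=0}^{k}\sup_{z}|\p^j\Phi(z)|/\omega_{\tau-j}(z)$, while the tangential term $\sup_z\widetilde\Phi(z)/(1-|z|^2)^{\tau_0}$ does not involve $k$. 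Since $\varphi\in\Lambda_\tau$ forces~\eqref{eqn:lipder} to hold for every integer $k>\tau$ and $|\p^j\Phi|\le|d^j\Phi|$, the membership $\Phi\in G^{\tau_0}_{\tau,k}$ is valid for all integers $k>\tau$ as soon as it is established for one of them, which is exactly what Proposition~\ref{prop:GLip1} provides.

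If $n=1$, then $|\p_T\Phi|\equiv0$, so $\widetilde\Phi(z)=(1-|z|^2)|\bar\p\Phi(z)|$, and Proposition~\ref{prop:GLip1}a) already states that $\Phi$ belongs to every $G^{\tau_0}_{\tau,k}$; taking $\tau_0:=\min(\tau,1/4)$ (any $\tau_0$ with $0<\tau_0\le\tau$ and $\tau_0<1/2$ would do) settles this case. If $n>1$ and $\tau>1/2$, I would split according to the two regimes of Proposition~\ref{prop:GLip1}b): for $1/2<\tau<1$ choose $\tau_0:=\tau-1/2\in(0,1/2)$, which gives $\Phi\in G^{\tau-1/2}_{\tau,k}=G^{\tau_0}_{\tau,k}$ for every integer $k>\tau$; for $\tau\ge1$ choose any $\tau_0\in(0,1/2)$, say $\tau_0=1/4$, and again Proposition~\ref{prop:GLip1}b) yields $\Phi\in G^{\tau_0}_{\tau,k}$ for every integer $k>\tau$. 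In all cases $\Phi|_{\bB}=\varphi$, which is the assertion.

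There is essentially no obstacle: all of the analysis --- in particular the control of the complex tangential derivatives $\bar\p_T\Phi$ of the harmonic extension, which is what allows $\tau_0$ to be taken as large as $\tau-1/2$ when $1/2<\tau<1$ --- is already carried out in Proposition~\ref{prop:GLip1}. The only points worth a remark are the non-emptiness of the admissible range of $\tau_0$ in each regime, which is immediate, and the fact that the conclusion holds for \emph{every} admissible $k$ rather than just the smallest one, which follows from the ``for some (any) $k>\tau$'' character of the characterization~\eqref{eqn:lipder} together with the $k$-independence of the tangential part of the norm.
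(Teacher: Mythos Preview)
Your proof is correct and follows the same approach as the paper, which states the corollary without proof as an immediate consequence of Proposition~\ref{prop:GLip1}. Your extra remark about ``for any integer $k$'' is unnecessary, since Proposition~\ref{prop:GLip1} already asserts its conclusion \emph{for any integer $k>\tau$}, but it is not wrong.
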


\begin{prop} \label{prop:GLip2} 
 If $n>1$, $0<\tau<n$ and $\varphi\in\Gamma_\tau$, then, for any integer $k>\tau$, its invariant
 harmonic extension $\Phi$ satisfies that:
\begin{itemize}
\item $\Phi\in G^{\tau}_{\tau,k}$, when $0<\tau<1/2$. 
\item $\Phi\in G^{\tau_0}_{1,k}$, for any $0<\tau_0<1/2$, when $\tau\ge 1/2$.
\end{itemize}
\end{prop}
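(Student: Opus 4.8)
The plan is to check directly the two groups of inequalities that define the norm $\|\Phi\|_{G^{\tau_0}_{\tau,k}}$ of Definition~\ref{def:G}: that $|\partial^j\Phi(z)|\lesssim\omega_{\tau-j}(z)$ for $0\le j\le k$, and that $\widetilde{\Phi}(z)\lesssim(1-|z|^2)^{\tau_0}$, i.e.\ that both $(1-|z|^2)\,|\bar\partial\Phi(z)|$ and $(1-|z|^2)^{1/2}\,|\bar\partial_T\Phi(z)|$ are $\lesssim(1-|z|^2)^{\tau_0}$ (with $\tau_0=\tau$ if $0<\tau<1/2$ and $\tau_0$ any number in $(0,1/2)$ if $\tau\ge1/2$). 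The starting point is the characterization recalled above: since $0<\tau<n$, the invariant harmonic extension $\Phi$ of $\varphi\in\Gamma_\tau$ satisfies $|d^m\Phi(z)|\lesssim(1-|z|^2)^{\tau-m}$ for every integer $m>\tau$; since moreover $\|\Phi\|_\infty\le\|\varphi\|_\infty<\infty$, a standard argument --- integrating the top-order estimate along radii, the logarithm appearing exactly at $j=\tau$; cf.\ \cite[Ch.\,6]{Kr2} --- upgrades this to $|d^j\Phi(z)|\lesssim\omega_{\tau-j}(z)$ for all $j\ge0$.

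Since $|\partial^j\Phi|\le|d^j\Phi|$, this gives the first group of inequalities at once. Taking $j=1$ and using Lemma~\ref{lem:poteomegas} (with $a=1$, $b=\tau-1$, $c=\tau_0$; admissible because $\tau_0<1$ and $\tau_0\le\tau$) yields $(1-|z|^2)\,|\bar\partial\Phi(z)|\lesssim(1-|z|^2)\,\omega_{\tau-1}(z)\lesssim(1-|z|^2)^{\tau_0}$, the first half of the bound on $\widetilde{\Phi}$. The $j=1$ estimate also settles the complex tangential half \emph{when $\tau\ge1$}: then $\omega_{\tau-1}\lesssim\omega_0$, so $(1-|z|^2)^{1/2}\,|\bar\partial_T\Phi(z)|\lesssim(1-|z|^2)^{1/2}\,\omega_0(z)\lesssim(1-|z|^2)^{\tau_0}$ by Lemma~\ref{lem:poteomegas} (now with $a=1/2$, $b=0$, $c=\tau_0<1/2$).

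The crux is the complex tangential term when $0<\tau<1$, which is precisely where the non-isotropy of $\Gamma_\tau$ (as opposed to $\Lambda_\tau$) is indispensable, since the crude bound $|\bar\partial_T\Phi|\lesssim\omega_{\tau-1}$ is too weak. Here I would return to the integral representation $\Phi=P[\varphi]$ with $P(z,\zeta)=(1-|z|^2)^n\,|1-\bar\zeta z|^{-2n}$ the Poisson--Szeg\H o kernel. A direct computation gives, for $1\le i<j\le n$,
$$
\overline{\mathcal{D}_{ij}}^{\,(z)}P(z,\zeta)=\frac{n\,(1-|z|^2)^n\,(z_i\zeta_j-z_j\zeta_i)}{(1-\bar\zeta z)^n\,(1-\zeta\bar z)^{n+1}},
$$
and the non-isotropic Cauchy--Schwarz estimate $\sum_{i<j}|z_i\zeta_j-z_j\zeta_i|^2=|z|^2-|\bar\zeta z|^2\le 2\,|1-\bar\zeta z|$, valid for $\zeta\in\bB$, gives $|\overline{\mathcal{D}_{ij}}^{\,(z)}P(z,\zeta)|\lesssim(1-|z|^2)^n\,|1-\bar\zeta z|^{-(2n+1/2)}$. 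Since $\overline{\mathcal{D}_{ij}}^{\,(z)}\!\int_{\bB}P(z,\zeta)\,d\sigma(\zeta)=\overline{\mathcal{D}_{ij}}[1]=0$, I may subtract $\varphi(\zeta_z)$, $\zeta_z:=z/|z|$, inside the integral defining $\bar\partial_T\Phi(z)$. Now take $\tau'$ with $\tau_0\le\tau'\le\min(\tau,1/2)$, and $\tau'<1/2$ when $\tau\ge1/2$ (so $\tau'=\tau$ when $\tau<1/2$); then $\varphi\in\Gamma_\tau\subset\Gamma_{\tau'}$, and, $\tau'$ being $<1/2$, $\Gamma_{\tau'}$ is defined by a H\"older condition for the pseudodistance $d$, whence $|\varphi(\zeta)-\varphi(\zeta_z)|\lesssim d(\zeta_z,\zeta)^{\tau'}\lesssim|1-\bar\zeta z|^{\tau'}$ (using $1-|z|\lesssim|1-\bar\zeta z|$). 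Combining this with the kernel estimate and the standard integral bound $\int_{\bB}|1-\bar\zeta z|^{-(n+s)}\,d\sigma(\zeta)\approx(1-|z|^2)^{-s}$, $s>0$ (here $s=n+1/2-\tau'>0$), we obtain
$$
|\bar\partial_T\Phi(z)|\;\lesssim\;(1-|z|^2)^n\!\int_{\bB}\frac{d\sigma(\zeta)}{|1-\bar\zeta z|^{2n+1/2-\tau'}}\;\lesssim\;(1-|z|^2)^{\tau'-1/2}\;\le\;(1-|z|^2)^{\tau_0-1/2},
$$
so that $(1-|z|^2)^{1/2}\,|\bar\partial_T\Phi(z)|\lesssim(1-|z|^2)^{\tau_0}$, and $\Phi\in G^{\tau_0}_{\tau,k}$ follows.

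I expect the main difficulty to be bookkeeping rather than conceptual: marshalling the imported facts about $\Gamma_\tau$ in exactly the form needed --- the all-orders derivative bound for the invariant harmonic extension, and the embedding $\Gamma_\tau\subset\Gamma_{\tau'}$ together with the $d$-H\"older description of $\Gamma_{\tau'}$ for $\tau'<1/2$ --- and keeping the three regimes $0<\tau<1/2$, $1/2\le\tau<1$, $\tau\ge1$ apart (they cut across the two cases of the statement), verifying in each that the exponent supplied by Lemma~\ref{lem:poteomegas} or by the kernel computation dominates the target $(1-|z|^2)^{\tau_0}$.
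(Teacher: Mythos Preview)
The paper states this proposition without proof, relying on the cited characterization of $\Gamma_\tau$ via growth of $|d^k\Phi|$ for the invariant harmonic extension (see the paragraph preceding the proposition and the reference to~\cite[Chapter~6]{Kr2}) together with the remark that complex tangential derivatives gain half an order of regularity, $\mathcal{D}_{ij}\varphi\in\Gamma_{\tau-1/2}$. Your argument supplies exactly these details and is correct: the bound $|\p^j\Phi|\le|d^j\Phi|\lesssim\omega_{\tau-j}$ follows from the characterization plus radial integration, and the crucial tangential gain $|\bar\p_T\Phi(z)|\lesssim(1-|z|^2)^{\tau'-1/2}$ you establish by a direct Poisson--Szeg\H o kernel computation (the cancellation in $\overline{\mathcal{D}_{ij}}P$ and the estimate $\sum_{i<j}|z_i\zeta_j-z_j\zeta_i|^2\le 2|1-\bar\zeta z|$) rather than by invoking the $\Gamma_{\tau-1/2}$ fact as a black box. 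The two routes are equivalent in content; yours is more self-contained.

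One caveat on the \emph{target}: the second bullet of the statement reads $G^{\tau_0}_{1,k}$, whereas you prove $\Phi\in G^{\tau_0}_{\tau,k}$. For $\tau\ge1$ yours is the stronger conclusion and implies the stated one by Lemma~\ref{lem:embedG}. For $1/2\le\tau<1$ the literal $G^{\tau_0}_{1,k}$ would be stronger and does not obviously follow from the characterization (it would demand $|\p\Phi|\lesssim\omega_0$, while only $(1-|z|^2)^{\tau-1}$ is available). Since Corollary~\ref{cor:Glip2} requires precisely $G^{\tau_0}_{\tau,k}$, and since Proposition~\ref{prop:GLip1} has $\tau$ in the analogous slot, the ``$1$'' is almost certainly a typographical slip for ``$\tau$''; your target is the intended one.
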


\begin{cor} \label{cor:Glip2} Every $\varphi\in \Gamma_\tau$, $\tau>0$, is the restriction of a function
 $\Phi\in G_{\tau,k}^{\tau_0}$, for some $0<\tau_0$ and for any integer $k$.
\end{cor}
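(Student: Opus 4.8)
The plan is to deduce Corollary~\ref{cor:Glip2} from Proposition~\ref{prop:GLip2} together with the embedding Lemma~\ref{lem:embedG}, handling separately the ranges $0<\tau<n$ and $\tau\ge n$, and also the trivial case $n=1$ where $\Gamma_\tau=\Lambda_\tau$ so that Corollary~\ref{cor:Glip1} already applies. The point of the statement is purely to repackage the two-alternative conclusion of Proposition~\ref{prop:GLip2} into the cleaner assertion that \emph{every} $\varphi\in\Gamma_\tau$, for \emph{every} $\tau>0$, extends to a function in $G_{\tau,k}^{\tau_0}$ for some $\tau_0>0$ and \emph{every} integer $k$.

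First I would dispose of the small-$\tau$ case: if $0<\tau<1/2$, Proposition~\ref{prop:GLip2} (first bullet) gives that the invariant harmonic extension $\Phi$ lies in $G^{\tau}_{\tau,k}$ for every integer $k>\tau$; since $\tau<1/2$ we may take $\tau_0=\tau$, and for the finitely many integers $k\le\tau$ there is nothing to prove (the definition of $G^{\tau_0}_{\tau,k}$ requires $k>\tau$). If $1/2\le\tau<n$, the second bullet gives $\Phi\in G^{\tau_0}_{1,k}$ for any $0<\tau_0<1/2$ and any $k>\tau$; but here I would note $\Gamma_\tau\subset\Gamma_1$ (again using $|1-\bar\z\eta|\le|\z-\eta|$ type monotonicity of Lipschitz scales, or simply that $\tau\ge1$ forces membership in the $\Gamma_1$ class after the tangential-derivative reduction) — actually the cleanest route is just to invoke Proposition~\ref{prop:GLip2} at the parameter value $1$ itself when $\tau\ge1$, and at $\tau$ when $1/2\le\tau<1$, in each case landing in some $G^{\vartheta_0}_{\vartheta,m}$, and then upgrade via Lemma~\ref{lem:embedG}. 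The remaining case $\tau\ge n$ (which only occurs when $n\ge2$, since for $n=1$ we are in the $\Lambda_\tau$ picture) is handled by the inclusion $\Gamma_\tau\subset\Gamma_{\tau'}$ for any $\tau'<n$: pick such a $\tau'\ge 1/2$, apply the already-established $0<\tau<n$ case to get an extension $\Phi\in G^{\tau_0}_{\tau',k}$, and observe that a function of Lipschitz order $\tau\ge\tau'$ with the required derivative growth in fact sits in the \emph{$\tau$-version} of the space as well — here one uses that the derivative bounds defining $G^{\tau_0}_{\tau,k}$ via $\omega_{\tau-j}$ are implied by the better regularity coming from $\Gamma_\tau$, exactly as in the proof of Proposition~\ref{prop:GLip2}.

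The key structural steps, in order, are: (i) reduce to $n\ge2$ since $n=1$ is Corollary~\ref{cor:Glip1}; (ii) for $0<\tau<n$ quote Proposition~\ref{prop:GLip2} directly, choosing $\tau_0=\tau$ in the first bullet and any fixed $\tau_0\in(0,1/2)$ in the second; (iii) use Lemma~\ref{lem:embedG} to pass from the particular $(\vartheta,\vartheta_0,m)$ produced there to all admissible $(\tau,\tau_0,k)$ with $\vartheta_0\le\tau_0$, $\vartheta\le\tau$, $m\le k$ — this is what licenses the phrase ``for any integer $k$''; (iv) for $\tau\ge n$ reduce to $\tau'<n$ via the nesting $\Gamma_\tau\subset\Gamma_{\tau'}$ and then re-derive the sharp order-$\tau$ derivative estimates from the $\Gamma_\tau$ hypothesis. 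Throughout, the invariant harmonic extension of $\varphi$ is the natural choice of $\Phi$, and one checks it restricts to $\varphi$ on $\bB$ because $\varphi\in\Gamma_\tau\subset\mathcal C(\bB)$.

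The main obstacle I anticipate is step (iv): Proposition~\ref{prop:GLip2} is only stated for $0<\tau<n$, precisely because the characterization of $\Gamma_\tau$ by invariant harmonic extensions breaks down at $\tau=n$, so for $\tau\ge n$ one cannot simply quote it — one must instead exploit the tangential-regularity structure of $\Gamma_\tau$ (the fact that $\mathcal D_{ij}\varphi\in\Gamma_{\tau-1/2}$, noted in the excerpt) to peel off derivatives until the residual order drops below $n$, and then assemble the estimates $|\p^j\Phi(z)|\lesssim\omega_{\tau-j}(z)$ and $\widetilde\Phi(z)\lesssim(1-|z|^2)^{\tau_0}$ by induction. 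This is essentially a bootstrap on the order $\tau$, decreasing it by $1/2$ at a time via complex tangential differentiation, and it is the only place where a genuinely new computation beyond Proposition~\ref{prop:GLip2} and Lemma~\ref{lem:embedG} is required; everything else is bookkeeping with the two lemmas already in hand.
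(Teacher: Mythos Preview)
Your proposal has a genuine gap in step (iii), which then propagates into step (iv). You invoke Lemma~\ref{lem:embedG} to ``upgrade'' from the space $G^{\tau_0}_{1,k}$ (produced by the second bullet of Proposition~\ref{prop:GLip2}) to $G^{\tau_0}_{\tau,k}$ for $\tau>1$. But Lemma~\ref{lem:embedG} runs in the opposite direction: it says $G^{\tau_0}_{\tau,k}\subset G^{\vartheta_0}_{\vartheta,m}$ when $\vartheta\le\tau$, so from $G^{\tau_0}_{1,k}$ you can only pass to $G^{\tau_0}_{\vartheta,k}$ with $\vartheta\le 1$, never to $\vartheta=\tau>1$. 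The same misreading of the lemma underlies your phrase ``and then upgrade via Lemma~\ref{lem:embedG}'' in the $1/2\le\tau<n$ discussion. Consequently, for every $\tau>1$ your argument, as written, produces only an extension in $G^{\tau_0}_{1,k}$, which is strictly weaker than the claim.

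The paper's (implicit) route avoids this entirely by combining Proposition~\ref{prop:GLip2} with Proposition~\ref{prop:GLip1} rather than with Lemma~\ref{lem:embedG}. Recall from the introduction that $|1-\bar\z\eta|\le|\z-\eta|$ on $\bB$, so $\Gamma_\tau\subset\Lambda_\tau$ for every $\tau>0$. Hence for $n>1$ and $\tau>1/2$ one simply applies Proposition~\ref{prop:GLip1}(b) (equivalently Corollary~\ref{cor:Glip1}) to the harmonic extension of $\varphi\in\Gamma_\tau\subset\Lambda_\tau$, obtaining directly $\Phi\in G^{\tau_0}_{\tau,k}$ with the correct middle index $\tau$. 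Proposition~\ref{prop:GLip2} is then only needed for the residual range $n>1$, $0<\tau\le 1/2$: the first bullet handles $0<\tau<1/2$, and for $\tau=1/2$ the second bullet gives $G^{\tau_0}_{1,k}\subset G^{\tau_0}_{1/2,k}$ (here Lemma~\ref{lem:embedG} \emph{is} used, but in the correct, downgrading, direction). This also makes your step~(iv) unnecessary: the case $\tau\ge n$ is absorbed into $\tau>1/2$ via $\Gamma_\tau\subset\Lambda_\tau$, so no tangential bootstrap beyond Proposition~\ref{prop:GLip2} is required.
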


\subsection{Representation formulas and estimates}

In this subsection we recall some well-known results on the integral representation formulas obtained in \cite{Char}.

We begin by introducing the following nonnegative integral kernels and their corresponding integral operators. 
\begin{defn} 
Let $N,M,L\in\R$ such that $N>0$ and $L<n$. Then
$$
\mathcal{K}^{N}_{M,L}(w,z):=\frac{(1-|w|^2)^{N-1}}{|1-\bar w z|^{M}D(w,z)^L},
\quad(z, w\in \bar\B,\,\,z\ne w), 
$$ 
where $D(w,z):=|1-\bar w z|^2-(1-|w|^2)(1-|z|^2)$. 
 The associated integral operator is also denoted by $\mathcal{K}^{N}_{M,L}$: 
$$
\mathcal{K}^{N}_{M,L}(\psi)(z):=\int_{\B} \mathcal{K}^{N}_{M,L}(w,z)\psi(w)\,d\nu(w).
$$ 
\end{defn}

Note that $D(w,z)=|(w-z)\bar z|^2+(1-|z|^2)|w-z|^2$, so, for every $z\in \B$ such that $1-|z|^2\ge \delta>0$, we have that 
\begin{equation}\label{estimate:kernel:K}
\mathcal{K}^{N}_{M,L}(w,z)\simeq
\left\{\begin{array}{ll}
|w-z|^{-2L},     & \mbox{ if $|w-z|<(1-|z|)/2$,}\\
(1-|w|^2)^{N-1}, & \mbox{ if $|w-z|\ge(1-|z|)/2$.}
\end{array}\right. 
\end{equation}

\begin{thm}[{\cite{Char}}]\label{thm:rep}
 Let $N>0$. Then every function $\psi\in\mathcal{C}^1(\bar \B)$ decomposes as 
\begin{equation} \label{eqn:rep} 
\psi= \mathcal{P}^N(\psi)+\mathcal{K}^N(\bar\p \psi),
\end{equation}
where 
$$
\mathcal{K}^N(\bar\p \psi)(z):=\int_{\B}\mathcal{K}^N(w,z)\wedge\bar\p\psi(w)
$$
and $\mathcal{K}^N(w,z)$ is an $(n,n-1)$-form (on $w$) of class 
$\mathcal{C}^{\infty}$ on $\B\times \B$ outside its diagonal.

In particular if $\psi$ is holomorphic on $\B$ then $\psi=\mathcal{P}^N(\psi)$.

Moreover, $\mathcal{K}^N(w,z)$ satisfies the estimate
\begin{equation}\label{eqn:estimate:kernel:K}
|\mathcal{K}^N(w,z)\wedge\bar\p\psi(w)|\lesssim K^N_{N-n+1,n-1/2}(w,z) \ppsi(w),
\end{equation}
for any $\psi\in\mathcal{C}^1(\B)$, where $\ppsi$ is defined as in~{\eqref{eqn:def:pphi}}. 
\end{thm}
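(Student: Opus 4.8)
The plan is to obtain \eqref{eqn:rep} from a Koppelman-type homotopy formula on $\B$ adapted to the weight $d\nu_N$, and then to read off the anisotropic estimate \eqref{eqn:estimate:kernel:K} by tracking the gain of the kernel in the complex-tangential directions. First I would set up the Cauchy--Fantappi\`e--Koppelman machinery with two generating sections. The \emph{holomorphic} section attached to $\Phi(w,z)=1-\bar w z$ produces, after insertion of the weight $c_N(1-|w|^2)^{N-1}$, a Cauchy--Fantappi\`e kernel that reproduces holomorphic functions and is exactly $\mathcal{P}^N$; the Bochner--Martinelli section $\overline{w-z}/|w-z|^2$ supplies the universal solid solution operator for $\dbar$. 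The quantity $D(w,z)=|1-\bar w z|^2-(1-|w|^2)(1-|z|^2)$ is the natural denominator interpolating the two: via $D(w,z)=|(w-z)\bar z|^2+(1-|z|^2)|w-z|^2$ it is comparable to $|w-z|^2$ near the diagonal and to $|1-\bar w z|^2$ away from it, so the mixed kernel has singularities controlled by powers of $1-\bar w z$ and of $D$, which is precisely what the family $\K^N_{M,L}$ encodes (cf. \eqref{estimate:kernel:K}).

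Next I would form the $(n,n-1)$-form (in $w$) $\K^N(w,z)$ obtained as the Koppelman homotopy between these two sections, so that by Koppelman's lemma its total differential $d_w\K^N(w,z)$ equals the difference of the two endpoint kernels, up to the reproducing diagonal contribution. Wedging with $\psi$ and applying Stokes' theorem on $\B$ then splits $\psi$ into three pieces: the weighted Bergman reproduction $\mathcal{P}^N(\psi)$, the correction term $\int_{\B}\K^N(w,z)\wedge\dbar\psi(w)$, and a boundary integral over $\bB$. The boundary term carries a factor $(1-|w|^2)^{N}$ and therefore vanishes on $\bB$ precisely because $N>0$; this is the defining feature of Charpentier's weighted construction and leaves exactly \eqref{eqn:rep}. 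Smoothness of $\K^N$ off the diagonal is immediate, since there both $1-\bar w z$ and $D(w,z)$ are bounded away from $0$. Taking $\psi$ holomorphic gives $\dbar\psi=0$, recovering the reproducing identity $\psi=\mathcal{P}^N(\psi)$.

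The heart of the argument is the estimate \eqref{eqn:estimate:kernel:K}. Writing $\dbar\psi=\sum_j(\bar\p_{\bar w_j}\psi)\,d\bar w_j$, I would pair the coefficients of the $(n,n-1)$-form $\K^N(w,z)$ with the $d\bar w_j$, observing that these coefficients are built from the entries $\bar w_k,\ \bar z_k,\ 1-|w|^2$ of the two sections over powers of $1-\bar w z$ and $D(w,z)$. The decisive point is anisotropy: when $\dbar\psi$ is paired in the complex-normal direction (its component along $\bar w$, governed by $|\dbar\psi|$) the kernel behaves like $D^{-(n-1/2)}$ with an extra factor $1-|w|^2$, whereas along the complex-tangential directions $\mathcal{D}_{i,j}$ one factor $|w-z|\approx D^{1/2}$ is absorbed, so the tangential part carries only the half-power $(1-|w|^2)^{1/2}$. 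Collecting both contributions reproduces exactly $\ppsi(w)=(1-|w|^2)|\dbar\psi(w)|+(1-|w|^2)^{1/2}|\bar\p_T\psi(w)|$ against $\K^N_{N-n+1,\,n-1/2}(w,z)$, as defined in \eqref{eqn:def:pphi}.

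I expect the main obstacle to be precisely this bookkeeping of the tangential gain: verifying that the admissible exponent $L=n-1/2$ (admissible since $n-1/2<n$) emerges with exactly the half-integer shift that separates the normal weight $(1-|w|^2)$ from the tangential weight $(1-|w|^2)^{1/2}$ in $\pphi$. Once the determinant expansion of the Cauchy--Fantappi\`e kernel is carried out and the two regimes $D(w,z)\approx|w-z|^2$ and $D(w,z)\approx|1-\bar w z|^2$ of \eqref{estimate:kernel:K} are distinguished, the remaining comparisons are routine, and the constant in \eqref{eqn:estimate:kernel:K} depends only on $n$ and $N$.
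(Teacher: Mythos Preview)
The paper does not prove this statement at all: Theorem~\ref{thm:rep} is quoted from Charpentier~\cite{Char} and is used as a black box, so there is no ``paper's own proof'' to compare your proposal against. Your outline is a reasonable summary of the Cauchy--Fantappi\`e/Koppelman construction that underlies Charpentier's result (two generating sections, Stokes, vanishing of the boundary term thanks to the factor $(1-|w|^2)^N$ with $N>0$, and the anisotropic bookkeeping that produces the tangential gain encoded in $\ppsi$), but since the authors simply import the theorem, any detailed comparison would have to be made against~\cite{Char} rather than against this paper.
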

Then, it is clear that, 
\begin{equation}\label{eqn:estimates:P:K}
|\mathcal{\mathcal{P}}^N(\psi)|\lesssim \K^N_{n+N,0}(|\psi|) \quad\mbox{  and  }\quad |\mathcal{\K}^N(\bar\p\psi)|\lesssim \K^N_{N-n+1,n-1/2}(\ppsi).
\end{equation}
  
\begin{rem}\label{rem:thm:rep}
 The above representation formula will be applied in a more general setting to functions $\psi=\varphi f$
 where $\varphi\in G^{\tau_0}_{\tau,k}$ and either $f\in B^1_{-N}$, for $N>0$, or  $f\in H^1$, for $N=0$. The validity 
 of the formula for this class of functions is obtained by applying the dominated convergence theorem and
 Theorem~{\ref{thm:rep}} to the functions $\psi_r(z)=\psi(rz)$.
\end{rem}

\begin{lem}[{\cite[Lemma I.1]{Char}}] \label{lem:estK}  
$$
\int_{\B} \mathcal{K}^{N}_{M,L}(w,z)d\nu(w)\lesssim\omega_t(z),
$$
where $t:=n+N-M-2L$ is the so-called {\sf type} of the kernel $\mathcal{K}^{N}_{M,L}$. 
\end{lem}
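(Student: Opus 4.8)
The plan is to evaluate the integral by pushing it, via the involutive automorphism $\varphi_z$ of $\B$ that interchanges $0$ and $z$, to a classical Forelli--Rudin type integral. First I would recall the standard identities (see, e.g., \cite[\S 2.2]{Ru}): the real Jacobian of $u\mapsto\varphi_z(u)$ is $\left(\tfrac{1-|z|^2}{|1-\bar uz|^2}\right)^{n+1}$,
$$
1-|\varphi_z(u)|^2=\frac{(1-|z|^2)(1-|u|^2)}{|1-\bar uz|^2},\qquad |1-\bar wz|=\frac{1-|z|^2}{|1-\bar uz|}\quad(w=\varphi_z(u)).
$$
Plugging these into $D(w,z)=|1-\bar wz|^2-(1-|w|^2)(1-|z|^2)$ gives the clean formula $D(\varphi_z(u),z)=(1-|z|^2)^2|u|^2/|1-\bar uz|^2$. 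Substituting $w=\varphi_z(u)$ in $\int_\B\mathcal{K}^N_{M,L}(w,z)\,d\nu(w)$ and collecting powers --- those of $1-|z|^2$ add up precisely to $t=n+N-M-2L$, those of $1-|u|^2$ to $N-1$, and those of $|1-\bar uz|$ to $-(n+N+t)$ --- I would obtain the identity
$$
\int_\B\mathcal{K}^N_{M,L}(w,z)\,d\nu(w)=(1-|z|^2)^t\int_\B\frac{(1-|u|^2)^{N-1}}{|u|^{2L}\,|1-\bar uz|^{n+N+t}}\,d\nu(u).
$$

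Next I would split the last integral over $\{|u|<1/2\}$ and $\{|u|\ge 1/2\}$. On $\{|u|<1/2\}$ one has $|1-\bar uz|\approx 1$ and $(1-|u|^2)^{N-1}\approx 1$, so the contribution is $\lesssim\int_{|u|<1/2}|u|^{-2L}\,d\nu(u)<\infty$, where finiteness is exactly the hypothesis $L<n$. On $\{|u|\ge 1/2\}$ one has $|u|^{-2L}\approx 1$, so this part is $\lesssim\int_\B\frac{(1-|u|^2)^{N-1}}{|1-\bar uz|^{n+N+t}}\,d\nu(u)$; since $N>0$ the exponent $N-1>-1$, and the classical estimate (see, e.g., \cite[Prop.\ 1.4.10]{Ru}) applies with the ``excess'' parameter equal to $t$, giving a bound $\lesssim 1$ if $t<0$, $\lesssim\log\frac{e}{1-|z|^2}$ if $t=0$, and $\lesssim(1-|z|^2)^{-t}$ if $t>0$ --- that is, $\lesssim\omega_t(z)\,(1-|z|^2)^{-t}$ in all three cases. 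Since $\omega_t(z)(1-|z|^2)^{-t}\gtrsim 1$, the same bound also dominates the $\{|u|<1/2\}$ contribution, and multiplying through by the prefactor $(1-|z|^2)^t$ yields $\int_\B\mathcal{K}^N_{M,L}(w,z)\,d\nu(w)\lesssim\omega_t(z)$, which is the claim.

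I expect the only delicate point to be the change-of-variables bookkeeping that produces the reduced integral above --- in particular verifying the formula for $D(\varphi_z(u),z)$ and tracking the exponent of $|1-\bar uz|$ correctly. Once that identity is established, the argument is purely mechanical: the two standing hypotheses enter in a transparent way --- $L<n$ to make $|u|^{-2L}$ locally integrable at the origin (this governs the behaviour of the kernel near the diagonal $w=z$), and $N>0$ to make the boundary weight $(1-|u|^2)^{N-1}$ admissible in the Forelli--Rudin estimate --- and the trichotomy $t<0$, $t=0$, $t>0$ matches exactly the three cases in the definition of $\omega_t$.
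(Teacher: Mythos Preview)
Your argument is correct. The change of variables $w=\varphi_z(u)$ is carried out accurately: the formula $D(\varphi_z(u),z)=(1-|z|^2)^2|u|^2/|1-\bar uz|^2$ is right, and your exponent bookkeeping checks out (the $(1-|z|^2)$ exponent is indeed $t$, the $|1-\bar uz|$ exponent is $-(n+N+t)$). The subsequent split near and away from $u=0$, invoking $L<n$ for the local integrability at the origin and $N>0$ for the Forelli--Rudin estimate on the boundary part, cleanly produces the three cases in the definition of $\omega_t$.

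As for comparison with the paper: there is nothing to compare, since the paper does not prove this lemma at all --- it is quoted verbatim from \cite[Lemma~I.1]{Char}. Your proof is therefore a genuine addition. It is essentially the same route Charpentier takes (the involutive automorphism reduces everything to a standard Forelli--Rudin integral), but written in a more streamlined way adapted to the notation of the present paper.
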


Observe that from the above estimate we deduce that if $\K^N_{M,L}$ is a kernel of type 0, $0<\delta<N$ and $\psi(z)=(1-|z|^2)^{-\delta}$, then $\K^N_{M,L}(\psi)\lesssim \psi$.
As a consequence of that result and Schur's lemma we have:

\begin{lem} \label{lem:estK:Lp}  
If $\K^N_{M,L}$ is a kernel of type 0 and $0<\delta<N$, then $\K^{N}_{M,L}$ maps boundedly $L^p_{\delta}$ to itself.
\end{lem}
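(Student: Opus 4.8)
The plan is to regard $\K^N_{M,L}$ as an operator on a weighted Lebesgue space and apply Schur's test. Set $d\mu(z):=(1-|z|^2)^{\delta p-1}\,d\nu(z)$ and $\widetilde K(w,z):=(1-|w|^2)^{1-\delta p}\,\K^N_{M,L}(w,z)\ge0$, so that $\K^N_{M,L}(\psi)(z)=\int_\B \widetilde K(w,z)\,\psi(w)\,d\mu(w)$ and $\|\psi\|_{L^p_\delta}=\|\psi\|_{L^p(d\mu)}$ for $p<\infty$. Thus, for $1<p<\infty$, it suffices to produce $h>0$ on $\B$ and $C>0$ with
\begin{equation*}
\int_\B \widetilde K(w,z)\,h(w)^{p'}\,d\mu(w)\le C\,h(z)^{p'}
\quad\text{and}\quad
\int_\B \widetilde K(w,z)\,h(z)^{p}\,d\mu(z)\le C\,h(w)^{p},
\end{equation*}
for a.e.\ $z$ and a.e.\ $w$ respectively; for $p=1$ only the second inequality is needed (with $h\equiv1$, by Tonelli's theorem), and the case $p=\infty$ will be treated directly.

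I would try $h(z):=(1-|z|^2)^{-b}$. In the first integral the weight $(1-|w|^2)^{\delta p-1}$ cancels the factor $(1-|w|^2)^{1-\delta p}$, leaving $\int_\B \K^{N-bp'}_{M,L}(w,z)\,d\nu(w)$, which by Lemma~\ref{lem:estK} is $\lesssim\omega_{-bp'}(z)=(1-|z|^2)^{-bp'}$ as long as $0<bp'<N$, because the type $n+N-M-2L$ of $\K^N_{M,L}$ is $0$. In the second integral one factors out all powers of $1-|w|^2$ and, using the symmetries $|1-\bar wz|=|1-\bar zw|$ and $D(w,z)=D(z,w)$ to apply Lemma~\ref{lem:estK} with the two variables interchanged, obtains
\begin{align*}
\int_\B \widetilde K(w,z)\,h(z)^p\,d\mu(z)
&= (1-|w|^2)^{N-\delta p}\int_\B \K^{(\delta-b)p}_{M,L}(z,w)\,d\nu(z)\\
&\lesssim (1-|w|^2)^{N-\delta p}\,\omega_{(\delta-b)p-N}(w),
\end{align*}
valid whenever $(\delta-b)p>0$; this equals $(1-|w|^2)^{-bp}=h(w)^p$ exactly when $(\delta-b)p-N<0$. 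Hence both Schur inequalities hold for any $b$ with $0<b<N/p'$ and $\delta-N/p<b<\delta$, and the hypothesis $0<\delta<N$, together with $N/p+N/p'=N$, is precisely what makes this set of admissible $b$ nonempty. For $p=\infty$ the bound is immediate: for $\psi\in L^\infty_\delta$,
\begin{align*}
(1-|z|^2)^{\delta}\,|\K^N_{M,L}(\psi)(z)|
&\le \|\psi\|_{L^\infty_\delta}\,(1-|z|^2)^{\delta}\int_\B \K^{N-\delta}_{M,L}(w,z)\,d\nu(w)\\
&\lesssim \|\psi\|_{L^\infty_\delta}\,(1-|z|^2)^{\delta}\,\omega_{-\delta}(z)=\|\psi\|_{L^\infty_\delta},
\end{align*}
using $\delta>0$ for the last equality and $\delta<N$ to apply Lemma~\ref{lem:estK}.

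Once the problem is organized this way, everything is routine. The only point that genuinely requires care is the exponent bookkeeping: checking that the vanishing of the type forces the powers of $1-|z|^2$ to match after each use of Lemma~\ref{lem:estK}, and verifying that the constraints coming from the two Schur conditions are satisfiable by a single $b$, which is exactly where $0<\delta<N$ is used. Nothing beyond Lemma~\ref{lem:estK} and the elementary symmetry of the kernel enters.
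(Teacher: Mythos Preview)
Your proof is correct and follows essentially the same approach as the paper: the paper simply observes that $\K^N_{M,L}\bigl((1-|\cdot|^2)^{-\delta}\bigr)\lesssim(1-|z|^2)^{-\delta}$ for $0<\delta<N$ (from Lemma~\ref{lem:estK}) and invokes Schur's lemma without further detail. You have carried out the Schur test explicitly with the test function $h=(1-|z|^2)^{-b}$, verifying both dual inequalities and handling the endpoints $p=1,\infty$ separately; this is exactly what the paper's one-line argument is gesturing at.
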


By applying H\"older's inequality we deduce the following  pointwise estimate of the operators $K_{M,L}^N$, which will be often used in the forthcoming sections.

\begin{lem} \label{lem:HoldK} Let $N\ge 0$, $\tau>0$, $p\geq 1$ and $0<\e<N+\tau$. Then $\displaystyle{\left(\K^{N+\tau}_{N-n+1,n-1/2}(|\psi|)\right)^p\lesssim \K^{(N+\tau-\e)p}_{Np-n+1,n-1/2}(|\psi|^p).}$
\end{lem}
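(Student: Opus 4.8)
The plan is to apply Hölder's inequality directly to the integral defining $\K^{N+\tau}_{N-n+1,n-1/2}(|\psi|)(z)$, splitting the kernel $\K^{N+\tau}_{N-n+1,n-1/2}(w,z)$ as a product of a factor that will be raised to the power $p'$ (and whose $L^{p'}(d\nu)$-norm in $w$ must be controlled) and a factor that multiplies $|\psi(w)|$ and gets raised to the power $p$. Writing the kernel explicitly,
$$
\K^{N+\tau}_{N-n+1,n-1/2}(w,z)=\frac{(1-|w|^2)^{N+\tau-1}}{|1-\bar w z|^{N-n+1}D(w,z)^{n-1/2}},
$$
I would distribute the exponent $N+\tau-1$ of the $(1-|w|^2)$ factor as $a+b$ with $a=\frac{N+\tau-\e}{p}-\frac1p\cdot\text{(something)}$ chosen so that, after raising to $p$, the ``$\psi$-part'' becomes exactly $\K^{(N+\tau-\e)p}_{Np-n+1,n-1/2}(w,z)$, and split $|1-\bar w z|^{-(N-n+1)}$ and $D(w,z)^{-(n-1/2)}$ analogously into a $p$-part and a $p'$-part. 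Concretely, I expect to write
$$
\K^{N+\tau}_{N-n+1,n-1/2}(w,z)=\Big[\K^{(N+\tau-\e)p}_{Np-n+1,n-1/2}(w,z)\Big]^{1/p}\cdot G(w,z),
$$
where $G(w,z)$ collects the leftover powers, and then by Hölder
$$
\K^{N+\tau}_{N-n+1,n-1/2}(|\psi|)(z)\le\Big(\K^{(N+\tau-\e)p}_{Np-n+1,n-1/2}(|\psi|^p)(z)\Big)^{1/p}\Big(\int_\B G(w,z)^{p'}d\nu(w)\Big)^{1/p'}.
$$

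The key step is then to recognize $\int_\B G(w,z)^{p'}d\nu(w)$ as the integral of another kernel of the form $\K^{N'}_{M',L'}$ and to check, via Lemma 2.15, that this kernel has \emph{nonnegative type} (in fact one wants type $\ge 0$ so that $\int_\B G^{p'}d\nu(w)\lesssim \omega_t(z)\lesssim 1$, or at least $\lesssim$ a constant). The bookkeeping is: the $p$-part has $(1-|w|^2)$-exponent $(N+\tau-\e)p-1$, $|1-\bar wz|$-exponent $Np-n+1$, and $D$-exponent $n-1/2$; so the $p'$-part must have $(1-|w|^2)$-exponent $\big((N+\tau-1)-\frac{(N+\tau-\e)p-1}{p}\big)p'=\e(p-1)p'/(p-1)$—wait, one must simply solve $a/1+b/1=N+\tau-1$ with $a=((N+\tau-\e)p-1)/p$ and similarly for the other two exponents, then multiply the $p'$-parts by $p'$. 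After this arithmetic the $p'$-part kernel is $\K^{N''}_{M'',L''}(w,z)$ with $N''=\e p'$ (so $N''>0$), and one computes its type $t=n+N''-M''-2L''$ and verifies $t\ge 0$; since $0<\e<N+\tau$ was assumed, this should come out nonnegative, giving $\int_\B G^{p'}d\nu(w)\lesssim 1$ uniformly in $z$, which yields the claimed pointwise bound.

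The main obstacle is purely the exponent arithmetic: one must choose the split so that \emph{both} (i) the $p$-part is literally $\K^{(N+\tau-\e)p}_{Np-n+1,n-1/2}$ as written in the statement, and (ii) the $p'$-part has strictly positive $(1-|w|^2)$-power (so Lemma 2.15 applies, since that lemma requires $N>0$) and nonnegative type. I expect the natural split—put \emph{all} of the $|1-\bar wz|$ and $D$ weight into the $p$-part except what is forced, and distribute $(1-|w|^2)^{N+\tau-1}$ accordingly—to work precisely because the hypothesis $\e<N+\tau$ is exactly what makes the leftover $(1-|w|^2)$-exponent $\e p'>0$ and what makes the type nonnegative. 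There is no deep idea here beyond Hölder plus Lemma 2.15; the only care needed is to not mismatch the roles of $p$ and $p'$ and to handle the borderline type-$0$ case (where Lemma 2.15 gives a logarithmic factor $\omega_0(z)$, still $\lesssim$ a constant after noting it is bounded, or one slightly decreases $\e$ to make the type strictly positive).
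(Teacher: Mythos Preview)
Your approach is correct and is exactly what the paper has in mind (the paper simply says ``by applying H\"older's inequality'' without details). Two small clean-ups: the leftover kernel $G^{p'}$ is $\K^{\e p'}_{-(n-1),\,n-1/2}$, whose type is $n+\e p'+(n-1)-2(n-1/2)=\e p'>0$ strictly, so no borderline type-$0$ case ever occurs (and note $\omega_0(z)=\log\tfrac{e}{1-|z|^2}$ is \emph{not} bounded, so that parenthetical remark should be dropped); the hypothesis $\e<N+\tau$ is needed not for the $p'$-part but to ensure $(N+\tau-\e)p>0$, i.e.\ that the target kernel on the right-hand side is well defined.
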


In the next lemma we state some differentiation formulas for both operators $\mathcal{P}^N$ and $\mathcal{K}^N$.

\begin{lem} \label{lem:derPN}
Let $N\ge 0$, $\alpha\in\N^n$ and $k=|\alpha|$.
\begin{enumerate}
\item \label{item:derPN1} If $\psi\in \mathcal{C}^{k}(\bar \B)$, then 
$\p_\alpha \mathcal{P}^{N}(\psi)=\mathcal{P}^{N+k}(\p_\alpha\psi)$.
\item \label{item:derPN2} If $\psi\in \mathcal{C}^{k+1}(\bar \B)$, then $\p_\alpha \mathcal{K}^{N}(\bar\p\psi)=\mathcal{K}^{N+k}(\bar\p\p_\alpha\psi)$.
\end{enumerate}
\end{lem}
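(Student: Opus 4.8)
The plan is to prove part~(i) first for a single derivative and then iterate, and to deduce part~(ii) formally from part~(i) and the representation formula of Theorem~\ref{thm:rep}. The whole argument rests on one integration by parts in the variable~$w$.

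Fix $z\in\B$ and $j\in\{1,\dots,n\}$. Since the kernel $(1-\bar w z)^{-(n+N)}$ is \emph{antiholomorphic} in~$w$ and is smooth in~$z$ with all $z$-derivatives bounded uniformly in $w\in\bar\B$ for $z$ in compact subsets of~$\B$, we may differentiate under the integral sign to obtain
$$
\p_j\mathcal{P}^N(\psi)(z)=c_N(n+N)\int_{\B}\frac{\psi(w)\,\bar w_j\,(1-|w|^2)^{N-1}}{(1-\bar w z)^{n+N+1}}\,d\nu(w).
$$
The point is that, being antiholomorphic in~$w$, the kernel is annihilated by $\p_{w_j}$; hence, setting $g(w):=\psi(w)(1-|w|^2)^{N}(1-\bar w z)^{-(n+N+1)}$ and using $\p_{w_j}(1-|w|^2)^{N}=-N\,\bar w_j(1-|w|^2)^{N-1}$, we get
$$
\p_{w_j}g=\frac{(\p_j\psi)(1-|w|^2)^{N}}{(1-\bar w z)^{n+N+1}}-N\,\frac{\psi\,\bar w_j(1-|w|^2)^{N-1}}{(1-\bar w z)^{n+N+1}}.
$$
Integrating over~$\B$ and invoking the (normalized) divergence theorem $\int_{\B}\p_{w_j}g\,d\nu=n\int_{\bB}g\,\bar w_j\,d\sigma$, the boundary term vanishes when $N>0$ because $(1-|w|^2)^{N}=0$ on~$\bB$; comparing with the displayed formula for $\p_j\mathcal{P}^N(\psi)$ and using the elementary recursion $c_{N+1}=\tfrac{n+N}{N}\,c_N$ yields the base case $\p_j\mathcal{P}^N(\psi)=\mathcal{P}^{N+1}(\p_j\psi)$. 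For $N=0$ the weight is absent, $\p_{w_j}$ still kills the Cauchy kernel, and now the boundary term does \emph{not} vanish: it equals $n\int_{\bB}\psi\,\bar\z_j(1-\bar\z z)^{-(n+1)}\,d\sigma=\p_j\mathcal{P}(\psi)$, which the divergence theorem identifies with the volume integral $\mathcal{P}^1(\p_j\psi)$ (here $c_1=1$). Thus the base case holds for every $N\ge0$.

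Iterating the base case $k=|\alpha|$ times---each step raising the exponent by one and remaining legitimate because $\psi\in\mathcal{C}^k(\bar\B)$ keeps the successive integrands of class $\mathcal{C}^1$---and using that the operators $\p_j$ commute, we obtain part~(i): $\p_\alpha\mathcal{P}^N(\psi)=\mathcal{P}^{N+k}(\p_\alpha\psi)$. For part~(ii) write the decomposition $\psi=\mathcal{P}^N(\psi)+\mathcal{K}^N(\dbar\psi)$ of Theorem~\ref{thm:rep} as $\mathcal{K}^N(\dbar\psi)=\psi-\mathcal{P}^N(\psi)$; applying $\p_\alpha$ and part~(i) gives $\p_\alpha\mathcal{K}^N(\dbar\psi)=\p_\alpha\psi-\mathcal{P}^{N+k}(\p_\alpha\psi)$. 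On the other hand, Theorem~\ref{thm:rep} applied with exponent $N+k$ to $\p_\alpha\psi\in\mathcal{C}^1(\bar\B)$ (this is where the hypothesis $\psi\in\mathcal{C}^{k+1}(\bar\B)$ enters) reads $\p_\alpha\psi=\mathcal{P}^{N+k}(\p_\alpha\psi)+\mathcal{K}^{N+k}(\dbar\p_\alpha\psi)$, and subtracting the two identities yields $\p_\alpha\mathcal{K}^N(\dbar\psi)=\mathcal{K}^{N+k}(\dbar\p_\alpha\psi)$.

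The one place needing care is the integration by parts when $0<N<1$, since then $(1-|w|^2)^{N}$ is continuous but not $\mathcal{C}^1$ up to~$\bB$. I would handle this by applying the divergence theorem on $\B_r=\{|w|<r\}$ and letting $r\nearrow1$: the boundary integral over $\{|w|=r\}$ carries the factor $(1-r^2)^{N}\to0$, while the interior integrals converge by dominated convergence, because for fixed $z\in\B$ the kernel is bounded on $\bar\B$ and $(1-|w|^2)^{N-1}\in L^1(d\nu)$. The same boundedness of the kernel and its $z$-derivatives on compact subsets of~$\B$ justifies the differentiation under the integral sign throughout. This limiting step is the only genuine obstacle; everything else is bookkeeping with the Gamma-function constants and the commutation of the holomorphic derivatives.
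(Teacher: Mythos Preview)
Your argument is correct and essentially matches the paper's: both prove (i) for $N>0$ by the same integration by parts (the paper phrases it as the kernel identity $\p_{z_j}\mathcal{P}^N(w,z)=\p_{w_j}\mathcal{P}^{N+1}(w,z)$ followed by integration by parts, which unwinds to exactly your computation with $g$), and both obtain (ii) by combining (i) with the representation formula~\eqref{eqn:rep}. The only real difference is the $N=0$ case: the paper passes to the limit $N\searrow0$ in the formula already established for $N>0$ (using $\lim_{N\searrow0}\mathcal{P}^N=\mathcal{P}$), whereas you argue directly from the divergence theorem, now keeping the boundary term and identifying it with $\p_j\mathcal{P}(\psi)$. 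Your direct route is a pleasant alternative and avoids the limiting argument, while the paper's route avoids checking the normalization $\int_{\B}\p_{w_j}g\,d\nu=n\int_{\bB}g\,\bar w_j\,d\sigma$; both are equally short.
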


\begin{proof} These results are well known (see, for instance, \cite[\S\,5]{Br-Or}). For the sake of completeness, we give a  brief sketch of the proof. For $N>0$, {\eqref{item:derPN1}}~follows
 from the equation $\frac{\p }{\p z_j}\mathcal{P}^N(w,z)=$ $\frac{\p }{\p w_j}\mathcal{P}^{N+1}(w,z)$ and integration by parts, while~{\eqref{item:derPN2}} is just a direct consequence of~{\eqref{eqn:rep}} and~{\eqref{item:derPN1}}.  

The case $N=0$ is deduced from the corresponding formulas for $N>0$ by taking $N\searrow 0$.
\end{proof}

\begin{rem} \label{rem:derPN} The above differentiation formulas will be applied to functions $\psi=\varphi f$ where $\varphi\in G^{\tau_0}_{\tau,k}$ and $f\in B^\infty_s$, $s>0$. The validity of the formulas in this more general setting can be shown by applying Lemma~{\ref{lem:derPN}} to $\psi_r(z)=\psi(rz)$ and the dominated convergence theorem.
\end{rem}

Now we state some regularity properties related to the integral operator $\mathcal{P}^N$. 
\begin{prop} \label{prop:contG} \blankline
\begin{enumerate}
	\item If $0<\delta<N$ and $1\le p<\infty$, then $\mathcal{P}^N$ maps continuously $L^p_\delta$  in  $B^p_{-\delta}$.
	\item If $N\ge 0$, then $\mathcal{P}^N$ maps continuously $G^{\tau_0}_{\tau,k}$ in  $B^\infty_\tau$.
	\item If $N=0$, then $\mathcal{P}$ maps continuously $\Lambda_{\tau}$ in  $B^\infty_\tau$.
\end{enumerate}
\end{prop}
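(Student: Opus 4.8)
The argument splits into the three parts; in each, the plan is to reduce to tools already available: the pointwise domination $|\mathcal{P}^N(\psi)|\lesssim\K^N_{n+N,0}(|\psi|)$ of~\eqref{eqn:estimates:P:K} (for $N>0$), the differentiation rule $\p_\alpha\mathcal{P}^N(\psi)=\mathcal{P}^{N+|\alpha|}(\p_\alpha\psi)$ of Lemma~\ref{lem:derPN}\,\eqref{item:derPN1}, and the estimates for kernels of type~$0$ in Lemma~\ref{lem:estK}, Lemma~\ref{lem:estK:Lp} and the observation following Lemma~\ref{lem:estK}. Part~(i) is immediate: $\K^N_{n+N,0}$ is a kernel of type~$0$, so by Lemma~\ref{lem:estK:Lp} (as $0<\delta<N$) it maps $L^p_\delta$ to itself, and hence so does $\mathcal{P}^N$; since $\mathcal{P}^N(\psi)$ is holomorphic, $\mathcal{P}^N(\psi)\in H\cap L^p_{0,\delta}=B^p_{-\delta}$, with the corresponding norm estimate.

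For part~(ii) fix $\varphi\in G^{\tau_0}_{\tau,k}$ and a multiindex $\alpha$ with $j:=|\alpha|\le k$. Combining Lemma~\ref{lem:derPN}\,\eqref{item:derPN1} (legitimate here by Remark~\ref{rem:derPN}, via $\varphi_r(z)=\varphi(rz)$), the domination above, and the defining bound $|\p^j\varphi(z)|\lesssim\|\varphi\|_{G^{\tau_0}_{\tau,k}}\,\omega_{\tau-j}(z)$, one obtains for $j\ge1$
$$
|\p_\alpha\mathcal{P}^N(\varphi)(z)|\lesssim\K^{N+j}_{n+N+j,0}(|\p_\alpha\varphi|)(z)
\lesssim\|\varphi\|_{G^{\tau_0}_{\tau,k}}\,\K^{N+j}_{n+N+j,0}(\omega_{\tau-j})(z),
$$
and similarly for $j=0$ (using $|\mathcal{P}^N(\varphi)|\lesssim\|\varphi\|_\infty\,\K^N_{n+N,0}(1)$ when $N>0$, and $|\mathcal{P}(\varphi)|\lesssim\|\varphi\|_\infty\,\omega_0$ directly when $N=0$). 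Since $\K^{N+j}_{n+N+j,0}$ has type~$0$, Lemma~\ref{lem:estK} and the observation after it, applied to the three possibilities $\omega_{\tau-j}(w)\lesssim1$ ($j<\tau$), $\omega_{\tau-j}=\omega_0$ ($j=\tau$), $\omega_{\tau-j}=(1-|\cdot|^2)^{\tau-j}$ ($j>\tau$), bound $\K^{N+j}_{n+N+j,0}(\omega_{\tau-j})(z)$ by $(1-|z|^2)^{\tau-k}$ when $j=k$ and by $(1-|z|^2)^{\tau-j}$ or $(1-|z|^2)^{-\e}$ (arbitrary $\e>0$) when $j<k$. Multiplying by $(1-|z|^2)^{k-\tau}$ and using $j\le k$ together with the strict inequality $k>\tau$, we get $\sup_z(1-|z|^2)^{k-\tau}|\p_\alpha\mathcal{P}^N(\varphi)(z)|\lesssim\|\varphi\|_{G^{\tau_0}_{\tau,k}}$ for all $|\alpha|\le k$. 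As $\mathcal{P}^N(\varphi)\in H$ and $d^j=\p^j$ there, this means $\mathcal{P}^N(\varphi)\in H\cap L^\infty_{k,k-\tau}=B^\infty_\tau$ with the asserted norm estimate.

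For part~(iii), if $n=1$ or $\tau>1/2$ then by Corollary~\ref{cor:Glip1} and Proposition~\ref{prop:GLip1} every $\varphi\in\Lambda_\tau$ extends to some $\Phi\in G^{\tau_0}_{\tau,k}$ with $\|\Phi\|_{G^{\tau_0}_{\tau,k}}\lesssim\|\varphi\|_{\Lambda_\tau}$, and then $\mathcal{P}(\varphi)=\mathcal{P}^0(\Phi)\in B^\infty_\tau$ by part~(ii) with $N=0$. In the remaining case ($n>1$, $0<\tau\le1/2$) take $\Phi$ to be the harmonic extension of $\varphi$, so that $|d^j\Phi(z)|\lesssim\|\varphi\|_{\Lambda_\tau}\,\omega_{\tau-j}(z)$ for $0\le j\le k$ by~\eqref{eqn:lipder}, and $\mathcal{P}(\varphi)=\lim_{N\searrow0}\mathcal{P}^N(\Phi)$ pointwise on $\B$ by~\cite[\S\,0.3]{Bea-Bur}. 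Running the estimate of part~(ii) with $\Phi$ in place of $\varphi$ yields $(1-|z|^2)^{k-\tau}|\p_\alpha\mathcal{P}^N(\Phi)(z)|\lesssim\|\varphi\|_{\Lambda_\tau}$ for $0<N\le1$, with constants independent of $N$ (for $|\alpha|\ge1$ the kernels involved have weight parameter $N+|\alpha|\ge1$; the term $|\alpha|=0$ only contributes a factor $\omega_0$, uniformly in $N$). Letting $N\searrow0$ gives $\mathcal{P}(\varphi)\in B^\infty_\tau$.

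The one genuinely delicate point is this last case of part~(iii). In contrast with Proposition~\ref{prop:GLip1}, for $n>1$ and $\tau\le1/2$ the harmonic extension of a $\Lambda_\tau$ function need not lie in any $G^{\tau_0}_{\tau,k}$ (its complex tangential derivatives are no more regular than its full differential), so part~(ii) cannot be invoked directly; the detour through $\mathcal{P}^N$ with $N\searrow0$, together with verifying that all the type~$0$ kernel estimates are uniform as $N\to0$, is where the work lies. In part~(ii) the only care needed is the bookkeeping with the weights $\omega_t$: the factor $(1-|z|^2)^{k-\tau}$ must absorb the logarithmic and small-negative-power losses coming from the type~$0$ kernels, which forces $k>\tau$ to be strict, the top order $j=k$ being the tight case.
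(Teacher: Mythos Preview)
Your argument is correct in all three parts, but it is considerably more elaborate than the paper's. In the paper, parts~(i) and~(iii) are disposed of by citation: (i) is~\cite[Theorem 2.10]{Zhu}, and (iii) is~\cite[\S\,6.4]{Ru}. Only~(ii) is proved directly, and there the paper checks \emph{only} the top order $j=k$: since $k>\tau$ one has $\omega_{\tau-k}=(1-|\cdot|^2)^{\tau-k}$, so
\[
|\p^k\mathcal{P}^N(\psi)(z)|=|\mathcal{P}^{N+k}(\p^k\psi)(z)|
\lesssim \K^{N+\tau}_{n+N+k,0}(1)(z)\lesssim(1-|z|^2)^{\tau-k},
\]
and for holomorphic functions this single estimate already gives $\mathcal{P}^N(\psi)\in B^\infty_\tau$ (by the well-known equivalence of $B^\infty_\tau$-norms using any order $>\tau$). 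Your version of~(ii), which tracks all orders $0\le j\le k$ and handles the $j<\tau$, $j=\tau$, $j>\tau$ cases separately, is correct but unnecessary.

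For~(iii) your route through part~(ii) is genuinely different from the paper's citation and does work, but note what it costs. In the case $n>1$, $\tau\le 1/2$ you correctly observe that the harmonic extension $\Phi$ is not in any $G^{\tau_0}_{\tau,k}$, so you pass through $\mathcal{P}^N(\Phi)$ with $N\searrow 0$ and must argue that all the type-$0$ kernel estimates are uniform in $N\in(0,1]$. This is true (for $|\alpha|\ge1$ the weight exponent $N+|\alpha|$ stays bounded away from~$0$, and the $|\alpha|=0$ term needs the normalizing constant $c_N\sim N/n$ to tame the divergent integral, which it does since $c_N\K^N_{n+N,0}(1)$ converges to the Cauchy integral on~$\bB$), but it is exactly the kind of technical verification that the citation to Rudin avoids. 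The payoff of your approach is self-containment; the payoff of the paper's is brevity.
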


\begin{proof} The proof of (i) can be found in~{\cite[Theorem 2.10]{Zhu}.}
 The proof of (ii) reduces to show that every $\psi\in G^{\tau_0}_{\tau,k}$ satisfies  
$$
|\p^k\mathcal{P}^N(\psi)(z)|=
|\mathcal{P}^{N+k}(\p^k\psi)(z)|\lesssim K^{N+\tau}_{n+N+k,0}(1)(z)\lesssim (1-|z|^2)^{\tau-k},
$$ 
 which follows from Lemmas~{\ref{lem:derPN}} and~{\ref{lem:estK}.} 
 Assertion (iii) can be found in \cite[\S\, 6.4]{Ru}.  
\end{proof}

\begin{prop} \label{prop:contTN}
If $\varphi\in G^{\tau_0}_{\tau,k}$, then $\T^N_\varphi$ maps boundedly $B^1_{-N}$ to itself, for $N>0$, and $H^1$ to 
 itself, for $N=0$.
 \end{prop}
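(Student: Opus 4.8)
The plan is to prove Proposition~\ref{prop:contTN} by reducing the boundedness of $\T^N_\varphi=\mathcal{P}^N\circ(\varphi\,\cdot\,)$ on $B^1_{-N}$ (resp.\ $H^1$) to two facts already available in the excerpt: that multiplication by $\varphi\in G^{\tau_0}_{\tau,k}$ preserves the relevant $L^1$-type space because $G^{\tau_0}_{\tau,k}\subset L^\infty(\B)$, and that $\mathcal{P}^N$ has the appropriate mapping properties. More precisely, for $N>0$ I would first note that if $f\in B^1_{-N}=L^1_N\cap H$, then $\varphi f\in L^1_N=L^1(\B,d\nu_N)$ with $\|\varphi f\|_{L^1_N}\le\|\varphi\|_{L^\infty(\B)}\|f\|_{L^1_N}\lesssim\|\varphi\|_{G^{\tau_0}_{\tau,k}}\|f\|_{B^1_{-N}}$, and then that $\mathcal{P}^N$ maps $L^1_N$ into $H$; in fact, recalling that $d\nu_N=c_N(1-|z|^2)^{N-1}d\nu$, the space $L^1_N$ is (up to the constant $c_N$) exactly $L^1_{N}$ in the notation $L^p_\delta=L^p(\B,(1-|z|^2)^{\delta p-1}d\nu)$ with $p=1$, $\delta=N$, so Proposition~\ref{prop:contG}(i) applies for any $0<\delta<N$ after using the embedding $L^1_N\subset L^1_\delta$ (from $\delta<N$), giving $\mathcal{P}^N:L^1_N\to B^1_{-\delta}\subset H$. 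This shows $\T^N_\varphi:B^1_{-N}\to H$ continuously; that its image actually lies in $B^1_{-N}$ follows from the embedding $B^1_{-\delta}\subset B^1_{-N}$ for $\delta<N$ in Proposition~\ref{prop:embed}(i), or more directly from the pointwise bound $|\mathcal{P}^N(\psi)|\lesssim\K^N_{n+N,0}(|\psi|)$ in~\eqref{eqn:estimates:P:K} together with Lemma~\ref{lem:estK:Lp}.

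For the case $N=0$, the argument is analogous but uses $H^1$ in place of $L^1_N$: here $\varphi$ is the restriction to $\bB$ of a function in $G^{\tau_0}_{\tau,k}$, hence $\varphi\in\mathcal{C}(\bB)\subset L^\infty(\bB)$, so $\varphi f\in L^1(\bB)$ with $\|\varphi f\|_{L^1(\bB)}\le\|\varphi\|_{L^\infty(\bB)}\|f\|_{H^1}$. Then I would invoke the classical boundedness of the Cauchy projection on the relevant space: since $f\in H^1$, the product $\varphi f$ has an $H^1$-atomic-type control, but more simply one uses that $\mathcal{P}=\mathcal{P}^0$ maps $H^1$ to $H^1$ when the multiplier is sufficiently regular (here $\varphi\in\Lambda_\tau$ via Proposition~\ref{prop:GLip0} applied to the restriction, or directly $\varphi$ continuous on $\bB$ suffices together with the $H^1$-boundedness of $\T_\varphi$ noted in the introduction). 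Alternatively, and more in the spirit of the rest of the paper, I would use the limiting relation $\mathcal{P}^N\to\mathcal{P}$ as $N\searrow0$ together with the uniform-in-$N$ bounds coming from~\eqref{eqn:estimates:P:K} and Lemma~\ref{lem:estK:Lp} on the appropriate $L^1$ spaces, passing to the limit via dominated convergence as in Remark~\ref{rem:thm:rep}.

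The main obstacle I expect is the endpoint nature of the $L^1$ (and $H^1$) estimates: $\mathcal{P}^N$ does \emph{not} map $L^1_N$ continuously into $B^1_{-N}$ at the exact exponent $\delta=N$, which is why Proposition~\ref{prop:contG}(i) requires the strict inequality $0<\delta<N$; so the clean reduction goes through $B^1_{-\delta}$ with $\delta<N$ and then the embedding $B^1_{-\delta}\subset B^1_{-N}$, and one must be careful that the constant in the final estimate is finite (it will blow up as $\delta\to N$, but any fixed $\delta\in(0,N)$ works). For $N=0$ the endpoint is the boundedness of the Cauchy projection on $H^1$, which is genuinely a Hardy-space fact rather than a weighted-Bergman one; here I would lean on the observation already made in the introduction that $\T_\varphi$ maps $H^1$ to $H^1$ precisely because $\varphi\in\Lambda_\tau$, so for $N=0$ the proposition is essentially a restatement of that remark combined with Corollary~\ref{cor:Glip1} or Proposition~\ref{prop:GLip0}. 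Aside from this endpoint care, the proof is a routine combination of $\|\varphi\|_\infty\lesssim\|\varphi\|_{G^{\tau_0}_{\tau,k}}$ with the cited continuity properties of $\mathcal{P}^N$.
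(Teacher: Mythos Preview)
Your treatment of the case $N=0$ is essentially what the paper does: invoke $G^{\tau_0}_{\tau,k}\subset\Lambda_{\tau_0}$ (Proposition~\ref{prop:GLip0}) and then cite the classical $H^1$--boundedness of Toeplitz operators with Lipschitz symbols (the paper cites \cite[Theorem 6.5.4]{Ru}). That part is fine.

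The case $N>0$, however, has a genuine gap. Your argument hinges on the embedding $L^1_N\subset L^1_\delta$ for $\delta<N$, but this inclusion goes the \emph{wrong} way: since $(1-|z|^2)^{\delta-1}\ge(1-|z|^2)^{N-1}$ near $\bB$ when $\delta<N$, one has $L^1_\delta\subset L^1_N$, not the reverse. Consequently Proposition~\ref{prop:contG}(i) cannot be applied to $\varphi f$, which you only know lies in $L^1_N$. The alternative you suggest via~\eqref{eqn:estimates:P:K} and Lemma~\ref{lem:estK:Lp} fails for the same reason: the kernel $\K^N_{n+N,0}$ has type $0$, and Lemma~\ref{lem:estK:Lp} only gives boundedness on $L^p_\delta$ for $0<\delta<N$, not at the endpoint $\delta=N$. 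In fact $\mathcal{P}^N$ is \emph{not} bounded from $L^1_N$ to $B^1_{-N}$, so no argument that uses only $\|\varphi\|_\infty$ can succeed here.

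The paper's proof exploits precisely the extra regularity of $\varphi$ that you discard. Using $G^{\tau_0}_{\tau,k}\subset\Lambda_{\tau_0}(\B)$ one has $|\varphi(w)-\varphi(z)|\lesssim|1-\bar w z|^{\tau_0/2}$, and the decomposition
\[
\T^N_\varphi(f)(z)=\mathcal{P}^N\bigl((\varphi-\varphi(z))f\bigr)(z)+\varphi(z)f(z)
\]
yields $|\T^N_\varphi(f)|\lesssim\K^N_{n+N-\tau_0/2,0}(|f|)+|f|$. The kernel $\K^N_{n+N-\tau_0/2,0}$ now has \emph{positive} type $\tau_0/2$, so Fubini and Lemma~\ref{lem:estK} give $\|\K^N_{n+N-\tau_0/2,0}(|f|)\|_{L^1_N}\lesssim\|f\|_{L^1_N}$ directly, avoiding the endpoint obstruction altogether. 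The Lipschitz oscillation of $\varphi$ is the missing idea in your proposal.
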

 
 \begin{proof} The second assertion is a consequence of $G^{\tau_0}_{\tau,k}\subset \Lambda_{\tau_0}$ and {\cite[Theorem 6.5.4]{Ru},} so let us prove the first one.
Assume $N>0$. By Proposition~{\ref{prop:GLip0},} $\varphi\in\Lambda_{\tau_0}(\B)$, so 
$|\varphi(w)-\varphi(z)|\lesssim|w-z|^{\tau_0}\lesssim|1-\bar w z|^{\tau_0/2}$. Then, since 
$\T^N_\varphi(f)(z)=\T^N_{\varphi-\varphi(z)}(f)(z)+\varphi(z)f(z)$,
 we have that 
$$
|\T^N_\varphi(f)|\lesssim\K^N_{n+N-\tau_0/2,0}(|f|)+|f|.
$$
By Fubini's Theorem and Lemma~{\ref{lem:estK},}  $\|\K^N_{n+N-\tau_0/2,0}(|f|)\|_{L^1_N}\lesssim\|f\|_{L^1_N}$.
 Therefore $\|\T^N_\varphi(f)\|_{L^1_N}\lesssim\|f\|_{L^1_N}$, and the proof is complete.
 \end{proof}

\section{Toeplitz operators with symbols in $G^{\tau_0}_{\tau,k}$}


In this section we state a general theorem from which we will deduce the results stated in the introduction.
 The proof of this general theorem will be postponed to the next section.

Observe that if the functions $\varphi\in G^{\tau_0}_{\tau,k}$ and $f\in B^1_{-N}$, $N\ge0$, satisfy the equation $\T^N_{\varphi}(f)=h\in B^\infty_\tau$, then, taking into account Remark~{\ref{rem:thm:rep}}, 
formula~{\eqref{eqn:rep}} gives that 
 \begin{equation} \label{eqn:form210}
 \varphi f=\K^N(f\bar\p\varphi)+h.
 \end{equation}
Note that, by \eqref{eqn:estimate:kernel:K}, $$|\K^N(f\bar\p\varphi)|\lesssim \K^N_{N-n+1,n-1/2}(|f|\tilde \varphi) \lesssim \K^{N+\tau_0}_{N-n+1,n-1/2}(|f|)$$
and therefore by~{\eqref{estimate:kernel:K}}, $\K^N(f\bar\p\varphi)$ is pointwise defined even if $f\in B^1_{-N_0}$, for some $N<N_0<N+\tau_0$.  
This fact and the inclusion  $H^1\subset B^1_{-N_0}$ for any $N_0>0$, allow us to unify the proofs of
 Theorems~{\ref{thm:kerNInt}}  and~{\ref{thm:ker0Int},} using the following result:
 
\begin{thm}\label{thm:kerT1} Let $N\ge0$
and let $\varphi\in G^{\tau_0}_{\tau,k}$ be such that $0\notin\varphi(\bB)$.
 If  $0<N_0<N+\tau_0$,  $f\in B^1_{-N_0}$ and $h\in B_\tau^\infty$ satisfy \eqref{eqn:form210},
 then $f\in B^\infty_{\tau}$ and $\|f\|_{B_\tau^\infty}\lesssim\|f\|_{B^1_{-N_0}}+\|h\|_{B^\infty_\tau}$.
\end{thm}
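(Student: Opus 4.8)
The strategy is a bootstrapping argument on the weight of the Bergman-type space to which $f$ belongs, using equation~\eqref{eqn:form210} as a fixed-point relation. The key observation is that $0\notin\varphi(\bB)$ together with continuity on $\bar\B$ forces $|\varphi|$ to be bounded below by some $c>0$ on a neighborhood of $\bB$, say on the corona $\{1-|z|^2<\rho\}$; outside this corona $f$ and all its derivatives are controlled by $\|f\|_{B^1_{-N_0}}$ by subharmonicity, so the whole difficulty is localized near the boundary where we may divide by $\varphi$. Rewriting~\eqref{eqn:form210} as $f=\varphi^{-1}\K^N(f\bar\p\varphi)+\varphi^{-1}h$ on the corona and recording that $\varphi^{-1}\in G^{\tau_0}_{\tau,k}$ (division is legitimate since $\varphi$ is bounded away from $0$ there; one checks the $G^{\tau_0}_{\tau,k}$-norm of $1/\varphi$ via the quotient rule, Proposition~\ref{prop:prodGs}, and the chain rule for the $\tilde{\ }$ part), the plan is to iterate the estimate $|\K^N(f\bar\p\varphi)|\lesssim \K^{N+\tau_0}_{N-n+1,n-1/2}(|f|)$ noted just before the theorem.

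First I would run the \emph{$L^1$-level} bootstrap: if $f\in B^1_{-N_0}$ with $N_0<N+\tau_0$, then $\K^{N+\tau_0}_{N-n+1,n-1/2}$ has type $n+(N+\tau_0)-(N-n+1)-2(n-1/2)=\tau_0>0$, so by Lemma~\ref{lem:estK} it gains weight: applied to $|f|\in L^1_{N_0}$ it lands in $L^1_{N_0-\tau_0}$ (or better), i.e. $\varphi f$, hence $f$, lies in $B^1_{-(N_0-\tau_0)}$ near the boundary, using also $h\in B^\infty_\tau\subset B^1_{-\e}$. After finitely many steps we reach $f\in B^1_{-\e}$ for every $\e>0$, in fact $f\in B^1_0\subset H^1$, and then, pushing once more, $f\in B^\infty_{s}$ for some small $s>0$ (this is the transition from an integrable to a sup-norm estimate, obtained by combining Lemma~\ref{lem:HoldK} with $p$ large and Lemma~\ref{lem:estK:Lp} / Proposition~\ref{prop:contG}(i)). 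Throughout, each step contributes only a constant multiple of $\|f\|_{B^1_{-N_0}}+\|h\|_{B^\infty_\tau}$, so the final estimate is of the claimed form.

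Once $f\in B^\infty_s$ for some $s>0$, I would run the \emph{derivative-level} bootstrap to climb from $s$ up to $\tau$. Here one differentiates~\eqref{eqn:form210} using Lemma~\ref{lem:derPN}\eqref{item:derPN2} and the product rule: $\p_\alpha(\varphi f)=\K^{N+|\alpha|}(\bar\p\p_\alpha(\varphi f)-\text{lower order})+\p_\alpha h$, expand $\p_\alpha(\varphi f)$ by Leibniz, isolate the top term $\varphi\,\p_\alpha f$, and estimate the kernel contributions by $\K^N_{N-n+1,n-1/2}$ applied to $\widetilde{(\cdot)}$ together with the $G^{\tau_0}_{\tau,k}$-bounds on the derivatives of $\varphi$ and the inductive bound $|\p^j f(z)|\lesssim\omega_{s-j}(z)$. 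The type computations, governed by Lemma~\ref{lem:prodomegas}, show that each round improves the growth exponent of $\p^j f$ by essentially $\tau_0$ (capped by $\tau$), and $h\in B^\infty_\tau$ supplies the target regularity; after finitely many rounds one obtains $|\p^k f(z)|\lesssim\omega_{\tau-k}(z)(\|f\|_{B^1_{-N_0}}+\|h\|_{B^\infty_\tau})$, which is precisely $f\in B^\infty_\tau$ with the stated norm control.

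The main obstacle I anticipate is bookkeeping the two bootstraps so that the gain per step is genuinely bounded below (one must be careful that $\tau_0<1/2$ and the interplay $\omega_{\vartheta-m}+\omega_{\tau-k}$ in Lemma~\ref{lem:prodomegas} do not stall the iteration at a non-integer obstruction, and that the logarithmic case $t=0$ in $\omega_t$ is absorbed), and handling the \emph{tangential} part $\tilde\varphi$ correctly: the extra half-power $(1-|z|^2)^{1/2}|\bar\p_T\varphi|$ in~\eqref{eqn:def:pphi} is exactly what makes the kernel $\K^{N+\tau_0}_{N-n+1,n-1/2}$ of strictly positive type $\tau_0$ rather than type $0$, so one must track that the division by $\varphi$ and the Leibniz expansions never destroy this gain. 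The passage from $L^1_\delta$ estimates to $L^\infty$-type estimates (the jump across Proposition~\ref{prop:contG}) is the other delicate junction, handled by the H\"older trick of Lemma~\ref{lem:HoldK} with exponent $p$ chosen large enough that $n/p<\tau_0$.
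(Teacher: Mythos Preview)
Your three-phase bootstrap (localize near $\bB$ using $|\varphi|\ge c$, then an $L^1$-weight iteration gaining $\tau_0$ per step, then a derivative-level iteration up to $B^\infty_\tau$) is exactly the architecture of the paper's proof in Section~4. Two small corrections are worth recording. First, the $L^1$ bootstrap only yields $f\in B^1_{-t}$ for every $t>0$, not $f\in B^1_0$; the endpoint is not reached. Second, and more substantively, your ``pushing once more'' to pass from $B^1_{-t}$ (all $t>0$) to $B^\infty_s$ cannot be done in a single application of Lemma~\ref{lem:HoldK}: the paper inserts an intermediate iteration in the integrability exponent (its Step~2, Lemmas~4.3--4.4, following Dyakonov), showing $f\in B^{p_j}_{-s}$ with $1/p_{j+1}=1/p_j-\tau_0/(2n)$ until $p_j$ is large enough that the embedding $B^q_{-s}\subset B^\infty_{-s-n/q}$ gives $f\in B^\infty_{-t}$ for all $t>0$; only then does the sup-norm bootstrap (your ``derivative-level'' phase) begin, first reaching $H^\infty$ and then climbing by $\tau_0$ per step to $B^\infty_\tau$. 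Your identification of Lemma~\ref{lem:HoldK} and the constraint $n/p<\tau_0$ as the relevant tools is correct, but the passage is an iteration, not a single jump.
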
 

Now we easily deduce Theorems~{\ref{thm:kerNInt}}  and~{\ref{thm:ker0Int}} all at once:


\begin{thm} \label{thm:kerT2}
 Let $\varphi\in G^{\tau_0}_{\tau,k}$ be such that $0\notin \varphi(\bB)$.
\begin{enumerate}
 \item If $f\in B^1_{-N}$ and $\T^N_{\varphi}(f)\in B^\infty_\tau$, for some $N>0$, then $f\in B^\infty_{\tau}$
 and $\|f\|_{B_\tau^\infty}\lesssim\|f\|_{B^1_{-N}}+\|h\|_{B^\infty_\tau}$.
 \item If $f\in H^1$ and $\T_{\varphi}(f)\in B^\infty_\tau$, then $f\in B^\infty_{\tau}$
  and $\|f\|_{B_\tau^\infty}\lesssim\|f\|_{H^1}+\|h\|_{B^\infty_\tau}$.
\end{enumerate}
\end{thm}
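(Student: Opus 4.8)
The plan is to derive Theorem~{\ref{thm:kerT2}} from Theorem~{\ref{thm:kerT1}} by a short bootstrapping argument. The key observation, already recorded in the excerpt, is that whenever $\varphi\in G^{\tau_0}_{\tau,k}$, $N\ge0$, $f\in B^1_{-N}$ and $\T^N_\varphi(f)=h\in B^\infty_\tau$, the decomposition formula~{\eqref{eqn:rep}} applied to $\psi=\varphi f$ (justified in Remark~{\ref{rem:thm:rep}}) yields exactly the integral identity~{\eqref{eqn:form210}}, namely $\varphi f=\K^N(f\bar\p\varphi)+h$. So both parts of Theorem~{\ref{thm:kerT2}} reduce to checking that the hypotheses of Theorem~{\ref{thm:kerT1}} are met for a suitable choice of $N_0$.

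For part (i), I would take $f\in B^1_{-N}$ with $N>0$. Fix any $N_0$ with $N<N_0<N+\tau_0$ (possible since $\tau_0>0$). By Proposition~{\ref{prop:embed}}\eqref{item:embed2}, $B^1_{-N}\subset B^1_{-N_0}$, so $f\in B^1_{-N_0}$, and $\|f\|_{B^1_{-N_0}}\lesssim\|f\|_{B^1_{-N}}$. Since $h=\T^N_\varphi(f)\in B^\infty_\tau$ by hypothesis and~{\eqref{eqn:form210}} holds, Theorem~{\ref{thm:kerT1}} applies and gives $f\in B^\infty_\tau$ together with $\|f\|_{B^\infty_\tau}\lesssim\|f\|_{B^1_{-N_0}}+\|h\|_{B^\infty_\tau}\lesssim\|f\|_{B^1_{-N}}+\|h\|_{B^\infty_\tau}$, which is the asserted estimate. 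For part (ii), we have $N=0$, $f\in H^1$ and $\T_\varphi(f)=h\in B^\infty_\tau$; here~{\eqref{eqn:form210}} reads $\varphi f=\K^0(f\bar\p\varphi)+h$. Now choose any $N_0$ with $0<N_0<\tau_0$; by Proposition~{\ref{prop:embed}}(ii), $H^1\subset B^1_{-N_0}$, so $f\in B^1_{-N_0}$ with $\|f\|_{B^1_{-N_0}}\lesssim\|f\|_{H^1}$. Applying Theorem~{\ref{thm:kerT1}} with this $N=0$ and $N_0$ yields $f\in B^\infty_\tau$ and $\|f\|_{B^\infty_\tau}\lesssim\|f\|_{H^1}+\|h\|_{B^\infty_\tau}$.

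There is essentially no obstacle here: the entire content has been pushed into Theorem~{\ref{thm:kerT1}}, and the present statement is a matter of verifying that the integral equation~{\eqref{eqn:form210}} is available and that the relevant Besov-space embeddings let us lower the index $-N$ (or pass from $H^1$) to some $-N_0$ strictly above $-(N+\tau_0)$. The only point requiring a word of care is that $\K^N(f\bar\p\varphi)$ is pointwise well-defined for $f$ merely in $B^1_{-N_0}$, $N_0<N+\tau_0$; this is exactly the remark following~{\eqref{eqn:form210}}, using $|\K^N(f\bar\p\varphi)|\lesssim\K^{N+\tau_0}_{N-n+1,n-1/2}(|f|)$ and the kernel estimate~{\eqref{estimate:kernel:K}}. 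Hence the proof is just the two verifications above, and I would present it in roughly five lines.

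\begin{proof}
Since $\varphi\in G^{\tau_0}_{\tau,k}$, Remark~{\ref{rem:thm:rep}} shows that the decomposition~{\eqref{eqn:rep}} applies to $\psi=\varphi f$, so in both cases the hypothesis $\T^N_{\varphi}(f)=h\in B^\infty_\tau$ is equivalent to the integral identity~{\eqref{eqn:form210}}, i.e.\ $\varphi f=\K^N(f\bar\p\varphi)+h$.

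(i) Let $N>0$ and fix $N_0$ with $N<N_0<N+\tau_0$. By Proposition~{\ref{prop:embed}}\eqref{item:embed2}, $f\in B^1_{-N}\subset B^1_{-N_0}$ and $\|f\|_{B^1_{-N_0}}\lesssim\|f\|_{B^1_{-N}}$. Applying Theorem~{\ref{thm:kerT1}} we get $f\in B^\infty_\tau$ and
$$
\|f\|_{B^\infty_\tau}\lesssim\|f\|_{B^1_{-N_0}}+\|h\|_{B^\infty_\tau}\lesssim\|f\|_{B^1_{-N}}+\|h\|_{B^\infty_\tau}.
$$

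(ii) Now $N=0$ and $f\in H^1$. Fix $N_0$ with $0<N_0<\tau_0$. By Proposition~{\ref{prop:embed}}(ii), $f\in H^1\subset B^1_{-N_0}$ and $\|f\|_{B^1_{-N_0}}\lesssim\|f\|_{H^1}$. Theorem~{\ref{thm:kerT1}} then gives $f\in B^\infty_\tau$ and
$$
\|f\|_{B^\infty_\tau}\lesssim\|f\|_{B^1_{-N_0}}+\|h\|_{B^\infty_\tau}\lesssim\|f\|_{H^1}+\|h\|_{B^\infty_\tau}.
$$
\end{proof}
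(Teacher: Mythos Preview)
Your proof is correct and follows essentially the same route as the paper's: derive the integral identity~\eqref{eqn:form210} from Remark~\ref{rem:thm:rep}, then invoke Theorem~\ref{thm:kerT1} with an appropriate $N_0$. The only minor difference is that in part~(i) you pass to some $N_0$ with $N<N_0<N+\tau_0$ via the embedding $B^1_{-N}\subset B^1_{-N_0}$, whereas the paper simply takes $N_0=N$ (which already satisfies $0<N_0<N+\tau_0$) and applies Theorem~\ref{thm:kerT1} directly; your extra step is harmless but unnecessary.
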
 

\begin{proof} As we pointed out at the beginning of the section, if $\T^N_{\varphi}(f)=h\in B^\infty_\tau$, $N\ge0$,
 then $\varphi$ and $f$ satisfy \eqref{eqn:form210}.
 Therefore (i) directly follows from Theorem~{\ref{thm:kerT1}} (case $N>0$).
 By Proposition \ref{prop:embed}, $H^1\subset B^1_{-t}$, for every $t>0$, and, in particular, $H^1\subset B^1_{-N_0}$, for every $0<N_0<\tau_0$,
 so (ii) also follows from Theorem~{\ref{thm:kerT1}} (case $N=0$).   
\end{proof}

As an immediate consequence of Theorem~{\ref{thm:kerT2}} we obtain the following corollaries.

\begin{cor}\label{cor:kerT20}
Let $\tau>0$ and assume that $\varphi$ satisfy that $0\notin\varphi(\bB)$, and one of the following  conditions:
\begin{enumerate}
\item $n=1$ and $\varphi\in \Lambda_\tau$.
\item $n>1$, $\tau>\frac12$ and $\varphi\in \Lambda_\tau$.
\item $n>1$, $\tau\leq \frac12$ and $\varphi\in \Gamma_\tau$.
\end{enumerate}
If $f\in H^1$ and $\T_{\varphi}(f)\in B^\infty_\tau$, then $f\in B^\infty_{\tau}$.
\end{cor}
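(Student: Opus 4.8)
The plan is to derive Corollary~\ref{cor:kerT20} directly from Theorem~\ref{thm:kerT2}(ii) by showing that in each of the three listed cases the symbol $\varphi$ (or its appropriate harmonic extension) lies in some space $G^{\tau_0}_{\tau,k}$, so that the hypotheses of Theorem~\ref{thm:kerT2}(ii) are met. The point is that Theorem~\ref{thm:kerT2}(ii) is stated for symbols in $G^{\tau_0}_{\tau,k}$, while the corollary is phrased in terms of the classical and non-isotropic Lipschitz-Zygmund spaces; the bridge is provided by the embedding results in Subsection~\ref{sec:G}.

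First I would fix an integer $k>\tau$. In case (i), $n=1$, Proposition~\ref{prop:GLip1}(a) says the harmonic extension of $\varphi\in\Lambda_\tau$ belongs to $G^{\tau_0}_{\tau,k}$ for a suitable $\tau_0$ (equivalently, invoke Corollary~\ref{cor:Glip1}). In case (ii), $n>1$ and $\tau>1/2$, Proposition~\ref{prop:GLip1}(b) (or again Corollary~\ref{cor:Glip1}) gives that $\varphi\in\Lambda_\tau$ is the restriction to $\bB$ of a function $\Phi\in G^{\tau_0}_{\tau,k}$ for some $\tau_0>0$. In case (iii), $n>1$ and $0<\tau\le 1/2$, Proposition~\ref{prop:GLip2} (or Corollary~\ref{cor:Glip2}) gives that $\varphi\in\Gamma_\tau$ extends to a function $\Phi\in G^{\tau_0}_{\tau,k}$ for some $\tau_0>0$ (namely $\tau_0=\tau$ when $\tau<1/2$, and with $\tau$ replaced appropriately in the boundary value when $\tau=1/2$; one can always shrink to a smaller $G$-space by Lemma~\ref{lem:embedG} if needed to have $k>\tau$ and $\tau_0<1/2$). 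In all three cases, since $0\notin\varphi(\bB)$ is part of the hypothesis and $\Phi|_{\bB}=\varphi$, the non-vanishing condition $0\notin\Phi(\bB)$ required by Theorem~\ref{thm:kerT2} holds automatically.

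Having produced such a $\Phi\in G^{\tau_0}_{\tau,k}$ with $\Phi|_{\bB}=\varphi$ and $0\notin\Phi(\bB)$, I would then simply observe that $\T_\varphi$ depends only on the boundary values of the symbol, so the equation $\T_\varphi(f)=h\in B^\infty_\tau$ is precisely $\T_\Phi(f)=h\in B^\infty_\tau$ in the sense of Theorem~\ref{thm:kerT2}(ii) (here $\T_\Phi$ means the $N=0$ Toeplitz operator with symbol $\Phi|_{\bB}=\varphi$). Applying Theorem~\ref{thm:kerT2}(ii) with symbol $\Phi$ then yields $f\in B^\infty_\tau$, which is the desired conclusion.

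The only genuine content beyond bookkeeping is the verification that the extensions land in a $G$-space, and that verification has already been packaged into Propositions~\ref{prop:GLip1} and~\ref{prop:GLip2} and their corollaries; so in fact there is no real obstacle, and the proof is a short case check. The one mild subtlety to be careful about is the borderline value $\tau=1/2$ in dimension $n>1$: the corollary places it in case (iii) (via $\Gamma_\tau$), and Proposition~\ref{prop:GLip2} covers $\tau\ge 1/2$ by giving $\Phi\in G^{\tau_0}_{1,k}$; one must then check that $G^{\tau_0}_{1,k}$ still suffices, i.e. that $B^\infty_1$ is the relevant target and that $h\in B^\infty_\tau=B^\infty_{1/2}$ together with $\Phi\in G^{\tau_0}_{1,k}$ lets Theorem~\ref{thm:kerT2}(ii) be applied — using Proposition~\ref{prop:embed}(\ref{item:embed2}) to pass between the Besov scales as needed, or simply noting that the theorem's conclusion is monotone in $\tau$. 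I would state this as a brief remark rather than belabor it.
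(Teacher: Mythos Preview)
Your approach is correct and matches the paper's: the paper's proof is a single line citing Theorem~\ref{thm:kerT2} together with Corollaries~\ref{cor:Glip1} and~\ref{cor:Glip2}, which is exactly what you unpack case by case. One small simplification: for the borderline $\tau=1/2$ in case~(iii) you need not worry about landing in $G^{\tau_0}_{1,k}$ and then arguing with monotonicity of the conclusion---Corollary~\ref{cor:Glip2} already asserts $\Phi\in G^{\tau_0}_{\tau,k}$ for \emph{every} $\tau>0$ (the passage from $G^{\tau_0}_{1,k}$ to $G^{\tau_0}_{1/2,k}$ via Lemma~\ref{lem:embedG} is absorbed into that corollary), so you can apply Theorem~\ref{thm:kerT2}(ii) directly with the given $\tau$ in all three cases.
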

\begin{proof}
This is a consequence of Theorem \ref{thm:kerT2} and Corollaries \ref{cor:Glip1} and \ref{cor:Glip2}.
\end{proof}
\begin{cor}  Let $\varphi\in G^{\tau_0}_{\tau,k}$ be such that $0\notin \varphi(\bB)$.
\begin{enumerate}
 \item  If $N>0$ and $f\in B^1_{-N}$ satisfies that $\varphi f\in G_{\tau,k}^{\tau_0}+\ker \mathcal{P}^N$,
        then $f\in B^\infty_{\tau}$.
 \item If $f\in H^1$ satisfies that $\varphi f\in G_{\tau,k}^{\tau_0}+\ker \mathcal{P}$,
        then $f\in B^\infty_{\tau}$.
\end{enumerate}
\end{cor}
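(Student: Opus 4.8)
The statement is the last Corollary in the excerpt, whose two parts follow from Theorem~\ref{thm:kerT2} once we translate the hypothesis $\varphi f\in G^{\tau_0}_{\tau,k}+\ker\mathcal{P}^N$ into the language of the Toeplitz operator. The plan is as follows. Write $\varphi f=g+u$, where $g\in G^{\tau_0}_{\tau,k}$ and $u\in\ker\mathcal{P}^N$ (with $\mathcal{P}^0=\mathcal{P}$ and $\ker\mathcal{P}\subset L^1(\bB)$ understood in the case $N=0$). Apply the projection $\mathcal{P}^N$ to both sides: since $u\in\ker\mathcal{P}^N$,
\begin{equation*}
\T^N_\varphi(f)=\mathcal{P}^N(\varphi f)=\mathcal{P}^N(g).
\end{equation*}
By Proposition~\ref{prop:contG}(ii) (for $N\ge 0$), $\mathcal{P}^N$ maps $G^{\tau_0}_{\tau,k}$ continuously into $B^\infty_\tau$, so $h:=\mathcal{P}^N(g)\in B^\infty_\tau$. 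Thus $\T^N_\varphi(f)=h\in B^\infty_\tau$, and we are exactly in the situation of Theorem~\ref{thm:kerT2}: in case (i), $N>0$ and $f\in B^1_{-N}$, so Theorem~\ref{thm:kerT2}(i) yields $f\in B^\infty_\tau$; in case (ii), $N=0$ and $f\in H^1$, so Theorem~\ref{thm:kerT2}(ii) yields $f\in B^\infty_\tau$.

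\textbf{On the well-posedness of applying $\mathcal{P}^N$.} One point that needs a word of justification is that $\mathcal{P}^N(\varphi f)$ is well defined and that $\mathcal{P}^N$ is additive on the decomposition $\varphi f=g+u$. Since $\varphi\in G^{\tau_0}_{\tau,k}\subset L^\infty(\B)$ and $f\in B^1_{-N}$ (resp.\ $f\in H^1$), we have $\varphi f\in L^1_N$ (resp.\ $\varphi f\in L^1(\bB)$, using $\varphi\in\Lambda_{\tau_0}$ and the boundedness of $\T_\varphi$ on $H^1$ from Proposition~\ref{prop:contTN}); also $g\in G^{\tau_0}_{\tau,k}\subset L^\infty$, hence $g\in L^1_N$ (resp.\ $g\in L^1(\bB)$), and therefore $u=\varphi f-g$ lies in the same $L^1$ space, so the identity $\mathcal{P}^N(\varphi f)=\mathcal{P}^N(g)+\mathcal{P}^N(u)=\mathcal{P}^N(g)$ is legitimate. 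In the case $N=0$ this is just linearity of the Cauchy projection on $L^1(\bB)$, combined with the definition $\T_\varphi(f)=\mathcal{P}(\varphi f)$ and the hypothesis that the decomposition takes place inside $L^1(\bB)$.

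\textbf{Main obstacle.} There is essentially no analytic obstacle here: the corollary is a bookkeeping consequence of the machinery already in place. The only thing to be careful about is the consistency of the $N=0$ and $N>0$ conventions — in particular that $\ker\mathcal{P}\subset L^1(\bB)$ and $\varphi f\in L^1(\bB)$ in case (ii), so that the subtraction $u=\varphi f-g$ makes sense and $\mathcal{P}$ annihilates $u$ — and that Proposition~\ref{prop:contG}(ii) does cover $N=0$, which it is stated to do. Once these conventions are lined up, the proof is the two-line argument above: project, invoke Proposition~\ref{prop:contG}(ii) to land $h$ in $B^\infty_\tau$, and quote Theorem~\ref{thm:kerT2}.
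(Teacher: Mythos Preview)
Your proof is correct and follows exactly the paper's approach: the paper's proof is the single line ``This is a consequence of Theorem~\ref{thm:kerT2} and Proposition~\ref{prop:contG}(ii),'' and your argument spells out precisely that deduction (decompose $\varphi f=g+u$, project to get $\T^N_\varphi(f)=\mathcal{P}^N(g)\in B^\infty_\tau$, then apply Theorem~\ref{thm:kerT2}). The well-posedness remarks you add are sound and merely make explicit what the paper leaves implicit.
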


\begin{proof} 
 This is a consequence of Theorem~{\ref{thm:kerT2}} and Proposition~{\ref{prop:contG}(ii)}.
\end{proof}

\begin{cor} If $\varphi\in G^{\tau_0}_{\tau,k}$,
then $\ker(\T_{\varphi}^N-\lambda\I)\subset B^\infty_\tau$, for any $\lambda\in\C\setminus\varphi(\bB)$ and $N\ge0$.
In particular, $\ker \T^N_\varphi\subset B^\infty_\tau$, whenever $0\notin\varphi(\bB)$.
\end{cor}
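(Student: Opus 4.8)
The plan is to reduce the statement to Theorem~\ref{thm:kerT2} via the elementary observation that the operator $\T^N_\varphi - \lambda\I$ is itself a weighted Toeplitz operator with symbol in the same class. First I would note that for any $f\in B^1_{-N}$ (or $f\in H^1$ when $N=0$) we have
$$
(\T^N_\varphi - \lambda\I)(f) = \mathcal{P}^N(\varphi f) - \lambda f = \mathcal{P}^N(\varphi f) - \lambda\,\mathcal{P}^N(f) = \mathcal{P}^N((\varphi-\lambda)f) = \T^N_{\varphi-\lambda}(f),
$$
where we used that $\mathcal{P}^N$ fixes holomorphic functions (Theorem~\ref{thm:rep}, or the defining property of the weighted Bergman projection) so $\mathcal{P}^N(f)=f$. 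Thus $\ker(\T^N_\varphi-\lambda\I) = \ker\T^N_{\varphi-\lambda}$.

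Next I would check that the constant shift keeps us in the admissible symbol class: since constants obviously lie in $G^{\tau_0}_{\tau,k}$ (all derivatives of order $\ge 1$ vanish, $\omega_\tau$ is bounded below on $\B$, and $\widetilde{\lambda}\equiv 0$), and $G^{\tau_0}_{\tau,k}$ is a vector space, we get $\psi:=\varphi-\lambda\in G^{\tau_0}_{\tau,k}$. The hypothesis $\lambda\in\C\setminus\varphi(\bB)$ says exactly that $\psi(\z)=\varphi(\z)-\lambda\ne 0$ for every $\z\in\bB$, i.e. $0\notin\psi(\bB)$. So $\psi$ satisfies the hypotheses of Theorem~\ref{thm:kerT2}.

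Now if $f\in\ker(\T^N_\varphi-\lambda\I)$, then $f\in\ker\T^N_\psi$, so $\T^N_\psi(f) = 0 \in B^\infty_\tau$ trivially. Applying Theorem~\ref{thm:kerT2}(i) when $N>0$ and Theorem~\ref{thm:kerT2}(ii) when $N=0$ yields $f\in B^\infty_\tau$, which is the desired inclusion $\ker(\T^N_\varphi-\lambda\I)\subset B^\infty_\tau$. The final assertion is the special case $\lambda=0$, for which the hypothesis $\lambda\notin\varphi(\bB)$ becomes $0\notin\varphi(\bB)$, and $\T^N_\varphi-0\cdot\I=\T^N_\varphi$. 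I do not expect any real obstacle here: the only points requiring a word of care are the identity $\mathcal{P}^N f = f$ for $f$ holomorphic (already available) and the membership of constants in $G^{\tau_0}_{\tau,k}$ (immediate from the definition), so the proof is essentially a one-line reduction.
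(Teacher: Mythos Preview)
Your proof is correct and follows exactly the paper's approach: the paper's proof is the one-line observation that $\T^N_\varphi-\lambda\I=\T^N_{\varphi-\lambda}$ and an appeal to Theorem~\ref{thm:kerT2}. You have simply spelled out the verifications (that $\mathcal{P}^N f=f$ for holomorphic $f$, that constants lie in $G^{\tau_0}_{\tau,k}$, and that $\lambda\notin\varphi(\bB)$ means $0\notin(\varphi-\lambda)(\bB)$) which the paper leaves implicit.
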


\begin{proof} 
Since $\T_{\varphi}^N-\lambda\I=\T_{\varphi-\lambda}^N$, it directly follows from Theorem~{\ref{thm:kerT2}}.
\end{proof}

\begin{rem} If the condition $0\notin \varphi(\bB)$ is omitted, then $\ker\T^N_\varphi$ is not necessarily
 contained in $B^\infty_\tau$. For $n>1$, this result follows by taking $\varphi(z)=\bar z_1$, and observing that
 $\ker\T^N_\varphi$ contains any function in $B^1_{-N}$, if $N>0$, ($H^1$, if $N=0$), which does not depend on the
 first variable.  For $n=1$ we may consider the symbol $\varphi(z)=\bar z^{\,m+1}(1-z)^{m+\alpha}$ and the function 
$f(z)=(1-z)^{-\alpha}$, where $0<\alpha<1$  and $m\in\N$ such that $m+\alpha\ge\tau$, which satisfy
 $\varphi\in G^{\tau_0}_{\tau,k}$ and $f\in\ker\T^N_\varphi\setminus B^{\infty}_{\tau}$. 
\end{rem}

Now we extend Corollary \ref{cor:kerT20}.

\begin{thm} \label{thm:LipE}
 Let $\tau>0$ and let $\varphi\in \Lambda_\tau$ be a non-vanishing function on $\bB$.
 If $f\in H^1$ and $\T_\varphi(f)\in B^\infty_\tau$, then $f\in B^\infty_\tau$.
\end{thm}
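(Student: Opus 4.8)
The plan is to reduce to the one case not yet covered — $n>1$ and $0<\tau\le\tfrac12$ — and there to rerun the proof of Theorem~\ref{thm:kerT1} with the Poisson extension of $\varphi$ in place of an element of $G^{\tau_0}_{\tau,k}$.

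If $n=1$, or if $n>1$ and $\tau>\tfrac12$, then by Corollary~\ref{cor:Glip1} (Proposition~\ref{prop:GLip1}) the function $\varphi$ is the restriction to $\bB$ of some $\Phi\in G^{\tau_0}_{\tau,k}$, so the conclusion is Theorem~\ref{thm:kerT2}(ii). So assume $n>1$ and $0<\tau\le\tfrac12$. Since $|\z-\eta|^2\le2\,|1-\bar\z\eta|$ for $\z,\eta\in\bB$, every $\varphi\in\Lambda_\tau$ lies in $\Gamma_{\tau/2}$, and $\tau/2\le\tfrac12$; as $h=\T_\varphi(f)\in B^\infty_\tau\subset B^\infty_{\tau/2}$ (Proposition~\ref{prop:embed}), Corollary~\ref{cor:kerT20}(iii) already gives $f\in B^\infty_{\tau/2}$. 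In particular $f\in\mathcal C(\bar\B)$ is bounded and $|\p^jf(z)|\lesssim(1-|z|^2)^{\tau/2-j}$ for every $j$.

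Now let $\Phi$ be the Poisson extension of $\varphi$. By~\eqref{eqn:lipder}, $|\p^j\Phi(z)|\le|d^j\Phi(z)|\lesssim\omega_{\tau-j}(z)$ for $0\le j\le k$, and $\Phi\in\mathcal C(\bar\B)$ is non-vanishing on a neighbourhood of $\bB$ in $\bar\B$ (all that is used of $0\notin\varphi(\bB)$). Thus $\Phi$ satisfies every requirement of Definition~\ref{def:G} except the last, because a symbol merely in $\Lambda_\tau$ with $\tau\le\tfrac12$ carries no extra tangential regularity, so one only gets $\widetilde\Phi(z)\lesssim(1-|z|^2)^{\tau}+(1-|z|^2)^{\tau-1/2}\lesssim(1-|z|^2)^{\tau-1/2}$, i.e.\ the last term of the $G^{\tau_0}_{\tau,k}$-norm with the (possibly nonpositive) exponent $\tau-\tfrac12$ in place of $\tau_0>0$. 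Because $f\in B^\infty_{\tau/2}$, Remark~\ref{rem:thm:rep} still yields $\Phi f=\K^0(f\bar\p\Phi)+h$ (its complex-tangential part being understood via the integration by parts described below), and differentiating via Lemma~\ref{lem:derPN} gives, for each multi-index $\alpha$ with $|\alpha|=m\le k$, an identity of the form
$$
\Phi\,\p_\alpha f=\K^{m}\!\Big(\sum_{\beta\le\alpha}\tbinom{\alpha}{\beta}(\bar\p\,\p_\beta\Phi)\,\p_{\alpha-\beta}f\Big)-\sum_{0\neq\beta\le\alpha}\tbinom{\alpha}{\beta}(\p_\beta\Phi)\,\p_{\alpha-\beta}f+\p_\alpha h,
$$
in which $\K^{m}$ is a genuine kernel since $m\ge1$. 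One then estimates term by term, aiming at $|\p^mf(z)|\lesssim\omega_{\tau-m}(z)$: the second sum and $\p_\alpha h$ obey this at once from $|\p^j\Phi|\lesssim\omega_{\tau-j}$, $|\p^lf|\lesssim\omega_{\tau/2-l}$ and $h\in B^\infty_\tau$; and the ``normal'' part of the kernel term — the part of $\widetilde{\p_\alpha(\Phi f)}$ carrying the factor $(1-|w|^2)\,|\bar\p(\cdot)|$ in~\eqref{eqn:estimate:kernel:K} — behaves as if the exponent were $\tau>0$ and is controlled by Lemmas~\ref{lem:estK}--\ref{lem:HoldK} exactly as in Section~4.

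The genuinely new point, and the main obstacle, is the complex-tangential part of the kernel term, namely the contribution of $(1-|w|^2)^{1/2}\,|\bar\p_T\p_\beta\Phi(w)|\,|\p_{\alpha-\beta}f(w)|$: since $|\bar\p_T\p_\beta\Phi(w)|\lesssim(1-|w|^2)^{\tau-|\beta|-1}$ and nothing better holds for the Poisson extension of a function merely in $\Lambda_\tau$, a direct estimate produces a kernel of type $\tfrac{3\tau}2-m-\tfrac12<\tau-m$, short of what is needed. This is overcome precisely as the tangential terms are already handled in the proof of Theorem~\ref{thm:kerT1}: one integrates by parts the complex-tangential operators $\overline{\mathcal D_{il}}=w_i\bar\p_l-w_l\bar\p_i$ in the variable $w$. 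They annihilate the holomorphic factors $\p_{\alpha-\beta}f$ and the radial weight $(1-|w|^2)^{m-1}$ of $\K^{m}(w,z)$, hence fall only on the factors $|1-\bar wz|^{-(m-n+1)}$ and $D(w,z)^{-(n-1/2)}$, where, by Lagrange's identity together with $|\langle w,z\rangle|\le1$, $|\overline{\mathcal D_{il}}(1-\bar wz)|=|w_iz_l-w_lz_i|\lesssim|1-\bar wz|^{1/2}$. Thus moving the tangential derivative off $\p_\beta\Phi$ gains a full power of $1-|w|^2$ at the cost of only half a power of $|1-\bar wz|$ — exactly the balance encoded in the half-integer exponent of $D(w,z)$ in the kernels $\K^{N}_{N-n+1,n-1/2}$ — so the resulting kernel has type $\tfrac{3\tau}2-m\ge\tau-m$, and the a priori regularity $f\in B^\infty_{\tau/2}$ supplies exactly the extra $\tau/2$ needed. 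Dividing by $|\Phi(z)|\gtrsim1$ gives $|\p^mf(z)|\lesssim\omega_{\tau-m}(z)$ for $1\le m\le k$, hence $\sup_{z\in\B}(1-|z|^2)^{k-\tau}|d^kf(z)|<\infty$; as $f\in\mathcal C^k(\B)$, the characterization of $B^\infty_\tau$ by derivative growth (cf.~\eqref{eqn:lipder}) gives $f\in B^\infty_\tau$.
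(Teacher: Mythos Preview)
Your reduction to the case $n>1$, $0<\tau\le\tfrac12$, and the first move there --- $\Lambda_\tau\subset\Gamma_{\tau/2}$, hence $f\in B^\infty_{\tau/2}$ via Corollary~\ref{cor:Glip2} and Theorem~\ref{thm:kerT2}(ii) --- match the paper exactly. The gap is in what follows. You assert that the tangential integration by parts you sketch is ``precisely as the tangential terms are already handled in the proof of Theorem~\ref{thm:kerT1}'', but Section~4 contains no such manoeuvre: there the tangential contribution to $\K^N(f\bar\p\varphi)$ is absorbed \emph{directly} into the hypothesis $\pphi(z)\lesssim(1-|z|^2)^{\tau_0}$, which is built into the definition of $G^{\tau_0}_{\tau,k}$; no derivative is ever moved off the symbol onto the kernel. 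Carrying out your programme would require the explicit $(n,n-1)$-form $\K^m(w,z)$ (the paper records only the pointwise bound~\eqref{eqn:estimate:kernel:K}), the adjoint of $\overline{\mathcal D_{il}}$ against $d\nu$, and a careful analysis of $\overline{\mathcal D_{il}}$ acting on $D(w,z)^{-(n-1/2)}$ --- a factor not comparable to a power of $|1-\bar wz|$ near the diagonal, so your accounting ``gain a full power of $1-|w|^2$ at the cost of half a power of $|1-\bar wz|$'' is not substantiated. As written this is an outline, not a proof.

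The paper's argument from $f\in B^\infty_{\tau/2}$ is far shorter and avoids the Charpentier kernel entirely. Since $\tau<1$, one only needs $|\p_jf(z)|\lesssim(1-|z|^2)^{\tau-1}$. Using $\p_jf=\mathcal{P}^1(\p_jf)$ (valid because $\p_jf\in B^1_{-1}$), one \emph{freezes} the symbol at $z$:
\[
\Phi(z)\,\p_jf(z)=\mathcal{P}^1\big((\Phi(z)-\Phi)\,\p_jf\big)(z)+\mathcal{P}^1\big(\p_j(\Phi f)\big)(z)-\mathcal{P}^1(f\,\p_j\Phi)(z).
\]
By Lemma~\ref{lem:derPN} the middle term is $\p_j\T_\varphi(f)(z)\lesssim(1-|z|^2)^{\tau-1}$. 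For the first term, the \emph{isotropic} bound $|\Phi(z)-\Phi(w)|\lesssim|z-w|^\tau\lesssim|1-\bar wz|^{\tau/2}$ combines with $|\p_jf(w)|\lesssim(1-|w|^2)^{\tau/2-1}$ to give $\K^{\tau/2}_{n+1-\tau/2,0}(1)(z)$, of type $\tau-1$; for the third, $|f|\lesssim1$ and $|\p_j\Phi(w)|\lesssim(1-|w|^2)^{\tau-1}$ give $\K^{\tau}_{n+1,0}(1)(z)$, also of type $\tau-1$. Lemma~\ref{lem:estK} finishes. The idea you are missing is precisely this frozen-coefficient trick: it converts the $\Lambda_\tau$ regularity of $\Phi$ into a gain of $|1-\bar wz|^{\tau/2}$ inside a \emph{Bergman} projection, which the a~priori estimate $f\in B^\infty_{\tau/2}$ exactly matches --- no tangential regularity of $\Phi$ and no integration by parts are needed.
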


\begin{proof}
If $\tau>1/2$, the result is just a consequence of Corollary~{\ref{cor:Glip1}} and part (ii) of Theorem~{\ref{thm:kerT2}.}

Now assume that $\tau\le 1/2$. Since $|w-z|^2\le 2|1-\bar w z|$, we have that $\Lambda_\tau\subset \Gamma_{\tau/2}$.
 And then Corollary~{\ref{cor:Glip2}} and part (ii) of Theorem~{\ref{thm:kerT2}} show that $f\in B^\infty_{\tau/2}$.
Thus $|\p f(z)|\lesssim (1-|z|^2)^{\tau/2-1}$, but we want to prove that $|\p f(z)|\lesssim (1-|z|^2)^{\tau-1}$,
 or equivalently $|\Phi(z)\p f(z)|\lesssim (1-|z|^2)^{\tau-1}$,
 $\Phi$ being the harmonic extension of $\varphi$ to $\B$. (Recall that, since $0\not\in\varphi(\bB)$, there is $0<r<1$ so
 that $|\Phi(z)|\simeq 1$ for $r\le|z|\le 1$.)

In order to show the estimate note that $|d\Phi(z)|\lesssim (1-|z|^2)^{\tau-1}$, which implies that
 $|\Phi(z)-\Phi(w)|\lesssim|z-w|^{\tau}\lesssim|1-\bar w z|^{\tau/2}$, for $z,w\in \B$.
 On the other hand, since $f\in B^\infty_{\tau/2}$, $\p_jf\in B^1_{-1}$ so $\p_jf=\mathcal{P}^1(\p_jf)$ and therefore 
$$
\Phi(z) \p_j f(z)=\mathcal{P}^1((\Phi(z)-\Phi)\p_jf)(z)+\mathcal{P}^1(\p_j(\Phi f))(z)-\mathcal{P}^1(f\p_j\Phi)(z).
$$
By Lemma~{\ref{lem:derPN},} $\mathcal{P}^1(\p_j(\Phi f))=\p_j\T_{\varphi}f$. Hence
$$
|\Phi(z)\p_j f(z)| \lesssim \K^{\tau/2}_{n+1-\tau/2,0}(1)(z) + (1-|z|^2)^{\tau-1} + \K^{\tau}_{n+1,0}(1)(z),
$$
and then Lemma~{\ref{lem:estK}} shows that $|\Phi(z)\p_j f(z)|\lesssim (1-|z|^2)^{\tau-1}$.
\end{proof}

Since $\mathcal{P}$ maps $\Lambda_\tau$ to $B^\infty_\tau$, we deduce 

\begin{cor}
 If $f\in H^1$ and $\varphi\in \Lambda_\tau$ satisfy  $0\notin\varphi(\bB)$
 and $\varphi f\in \Lambda_\tau+\ker \mathcal{P}$, then $f\in B^\infty_{\tau}$.
\end{cor}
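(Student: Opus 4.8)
The plan is to deduce this corollary directly from Theorem~\ref{thm:LipE}, exactly as the analogous corollaries were deduced from Theorem~\ref{thm:kerT2}. The hypothesis $\varphi f\in\Lambda_\tau+\ker\mathcal{P}$ means we may write $\varphi f=g+k$ with $g\in\Lambda_\tau$ and $k\in\ker\mathcal{P}$, all living in $L^1(\bB)$; note this makes sense because $\varphi\in\Lambda_\tau\subset L^\infty(\bB)$ and $f\in H^1$, so $\varphi f\in L^1(\bB)$ and the Toeplitz operator $\T_\varphi(f)=\mathcal{P}(\varphi f)$ is well defined.

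First I would apply $\mathcal{P}$ to both sides: since $\mathcal{P}k=0$, we get $\T_\varphi(f)=\mathcal{P}(\varphi f)=\mathcal{P}(g)$. Now invoke the fact recorded just before the statement --- that $\mathcal{P}$ maps $\Lambda_\tau$ continuously into $B^\infty_\tau$ (this is Proposition~\ref{prop:contG}(iii)) --- to conclude $\mathcal{P}(g)\in B^\infty_\tau$, hence $h:=\T_\varphi(f)\in B^\infty_\tau$. At this point the hypotheses of Theorem~\ref{thm:LipE} are met: $\varphi\in\Lambda_\tau$ is non-vanishing on $\bB$, $f\in H^1$, and $\T_\varphi(f)\in B^\infty_\tau$. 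Theorem~\ref{thm:LipE} then yields $f\in B^\infty_\tau$, which is the assertion.

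There is essentially no obstacle here; the work has all been done in Theorem~\ref{thm:LipE} (and in turn in Theorem~\ref{thm:kerT1}). The only point requiring a sentence of care is the legitimacy of splitting off the $\ker\mathcal{P}$ part and applying $\mathcal{P}$ termwise --- this is immediate once one observes $\varphi f\in L^1(\bB)$ and that $\ker\mathcal{P}$ is by definition the kernel of $\mathcal{P}$ acting on $L^1(\bB)$, so the decomposition takes place inside $L^1(\bB)$ and linearity of $\mathcal{P}$ applies directly.

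\begin{proof}
By hypothesis, $\varphi f=g+k$ with $g\in\Lambda_\tau$ and $k\in\ker\mathcal{P}\subset L^1(\bB)$; since $\varphi\in\Lambda_\tau\subset L^\infty(\bB)$ and $f\in H^1$, indeed $\varphi f\in L^1(\bB)$. Applying $\mathcal{P}$ and using $\mathcal{P}(k)=0$, we get $\T_\varphi(f)=\mathcal{P}(\varphi f)=\mathcal{P}(g)$. By Proposition~\ref{prop:contG}(iii), $\mathcal{P}(g)\in B^\infty_\tau$, so $\T_\varphi(f)\in B^\infty_\tau$. Since $\varphi\in\Lambda_\tau$ is non-vanishing on $\bB$ and $f\in H^1$, Theorem~\ref{thm:LipE} gives $f\in B^\infty_\tau$.
\end{proof}
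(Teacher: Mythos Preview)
Your proof is correct and follows exactly the route the paper indicates: the paper simply remarks ``Since $\mathcal{P}$ maps $\Lambda_\tau$ to $B^\infty_\tau$, we deduce'' the corollary from Theorem~\ref{thm:LipE}, and your argument spells out precisely this deduction.
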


Now we obtain Corollary~{\ref{cor:Intarg}}:

\begin{cor}\label{cor:Intarg:generalized}
 If $f,g\in H^1\setminus\{0\}$ satisfy $\varphi=\bar g/f\in \Lambda_{\tau}$ and $0\notin\varphi(\bB)$,
 then $f,g\in B^\infty_\tau$.
 In particular,  if $f\in H^1\setminus\{0\}$ and its argument function $\bar f/f$ is in $\Lambda_{\tau}$,
 then $f\in B^\infty_\tau$.
\end{cor}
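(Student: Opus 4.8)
The plan is to derive both assertions from the corollary immediately preceding this one, which treats symbols $\varphi\in\Lambda_\tau$ with $0\notin\varphi(\bB)$ and $\varphi f\in\Lambda_\tau+\ker\mathcal{P}$. The starting point is the boundary identity $\varphi f=\bar g$ on $\bB$. Since $g\in H^1$, we have $\overline{g-g(0)}\in\ker\mathcal{P}$, as already noted in the text, so the splitting $\bar g=\overline{g(0)}+\overline{(g-g(0))}$ exhibits $\varphi f$ as a constant (which lies in $\Lambda_\tau$) plus an element of $\ker\mathcal{P}$. Hence $\varphi f\in\Lambda_\tau+\ker\mathcal{P}$, and since $\varphi\in\Lambda_\tau$ with $0\notin\varphi(\bB)$, the preceding corollary immediately yields $f\in B^\infty_\tau$.

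For $g$ I would argue symmetrically. Put $\tilde\varphi:=\bar f/g$. From $\varphi f=\bar g$ one gets $\bar\varphi\,\bar f=g$ on $\bB$, so $\tilde\varphi=1/\bar\varphi=\overline{1/\varphi}$; since $\varphi\in\Lambda_\tau\cap\mathcal{C}(\bB)$ is bounded away from $0$, the reciprocal $1/\varphi$, hence its conjugate, again belongs to $\Lambda_\tau$, and $0\notin\tilde\varphi(\bB)$ because $|\tilde\varphi|=1/|\varphi|\ge 1/\|\varphi\|_\infty>0$. Moreover $\tilde\varphi g=\bar f=\overline{f(0)}+\overline{(f-f(0))}\in\Lambda_\tau+\ker\mathcal{P}$, so the same corollary applied to $\tilde\varphi$ gives $g\in B^\infty_\tau$. (Alternatively, once $f\in B^\infty_\tau$ is known, one may simply observe that the boundary values of $\bar g=\varphi f$ are a product of two $\Lambda_\tau$ functions, hence lie in $\Lambda_\tau$, so conjugating and using that $B^\infty_\tau$ consists of the holomorphic functions with boundary values in $\Lambda_\tau$ gives $g\in B^\infty_\tau$ directly.)

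Finally, the last statement is the case $f=g$, for which I only need to check that $0\notin\varphi(\bB)$ is automatic: since $f\in H^1\setminus\{0\}$, its boundary function vanishes only on a set of measure zero, so $\varphi=\bar f/f$ has modulus $1$ almost everywhere on $\bB$; being continuous as an element of $\Lambda_\tau$, this forces $|\varphi|\equiv 1$, whence $0\notin\varphi(\bB)$. Then the first part applies with $g=f$. I do not expect a genuine obstacle here; the argument is essentially bookkeeping on top of the results already established, the only mildly delicate points being the almost-everywhere nonvanishing of $H^1$ boundary values in the last step and the standard facts (see \cite{Kr1}) that $\Lambda_\tau$ is closed under multiplication and under taking reciprocals of non-vanishing functions.
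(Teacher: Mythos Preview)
Your argument is correct and essentially the same as the paper's: the paper invokes Theorem~\ref{thm:LipE} directly (noting $\T_\varphi(f)=\mathcal{P}(\bar g)=\overline{g(0)}\in B^\infty_\tau$), which is equivalent to your use of the preceding corollary, and for $g$ it uses exactly your parenthetical alternative, i.e.\ $\bar g=\varphi f\in\Lambda_\tau$ once $f\in B^\infty_\tau$. Your extra verification that $|\bar f/f|\equiv 1$ in the ``in particular'' case is a welcome detail the paper leaves implicit.
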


\begin{proof}
 Since $\T_\varphi(f)=\mathcal{P}(\bar g)=\overline{g(0)}\in B^\infty_\tau$, Theorem~{\ref{thm:LipE}} 
shows that $f\in B^\infty_\tau$. Therefore $\bar g=f\varphi\in\Lambda_{\tau}$ and
 hence $g\in B^\infty_\tau$.
\end{proof}

\section{Proof of Theorem~{\ref{thm:kerT1}}}


This section is devoted to the proof of Theorem~{\ref{thm:kerT1}}.
 It is splitted into three steps composed of several lemmas that will give succesive improvements on the
 regularity of the solutions to the equation $\T^N_{\varphi}(f)=h$. 
First we will show that  any solution $f$ to  \eqref{eqn:form210} which is in $B^1_{-{N_0}}$ is in fact in any
 $B^1_{-t}$, $t>0$. Then we will obtain that the solution is in $B_{-t}^\infty$ for any $t>0$,
 and finally we will deduce that it is in $B_\tau^\infty$.
 
 Throughout this section we will assume that $\varphi$ and $h$ satisfy the hypotheses of
 Theorem~{\ref{thm:kerT1}.} 
Since  $|\varphi(\z)|\ge \rho>0$ on $\bB$, we can choose $r_0$ such that $|\varphi(z)|\ge \rho/2>0$ on the corona  $C=\{\,z\in \B\,:\,r_0\le|z|\le1\,\}$. 
 Let $\chi$ be a real $\mathcal{C}^\infty$-function on $\C^n$
 supported on the corona $C_0=\{\,z\in \B\,:\,r_0\le |z|\le 1+r_0\,\}$, such that $0\le\chi\le1$ and $\chi\equiv1$ on a neighborhood of $\bB$. Then~{\eqref{eqn:form210}} shows that
\begin{equation} \label{eqn:form21} 
f=\dfrac{\chi}{\varphi}\K^N(f\bar\p\varphi)+\dfrac{\chi}{\varphi}h+(1-\chi)f.
\end{equation}

The function  $(1-\chi)f$ is a $\mathcal{C}^\infty$ function with compact support on $\B$.
It is easy to prove that $\dfrac{\chi}{\varphi}\in G^{\tau_0}_{\tau,k}$,
 and so $\dfrac{\chi}{\varphi}h\in G^{\tau_0}_{\tau,k}$, by Proposition~{\ref{prop:prodGs}.} Therefore
 $\displaystyle{(1-\chi)f+\dfrac{\chi}{\varphi}h\in G^{\tau_0}_{\tau,k}}$ and 
\begin{equation}\label{eqn:regh}
 \|(1-\chi)f+\dfrac{\chi}{\varphi}h\|_{ G^{\tau_0}_{\tau,k}}\le C_\varphi\left(\|f\|_{B^1_{-N_0}}+\|h\|_{B^\infty_\tau}\right)
\end{equation}  
Hence, in order to prove that
$f\in B^\infty_\tau$, we have just to show that 
$\dfrac{\chi}{\varphi}\K^N(f\bar\p\varphi)\in G^{\tau_0}_{\tau,k}$.
\vspace*{12pt}

%

\noindent
{\bf\sc Step 1.} The first couple of lemmas will show that $f\in B^1_{-t}$, for any $t>0$.

\begin{lem}\label{lem1:thm:kerT1} 
Let $f\in B^1_{-s}$, for some $0<s<N+\tau_0$, and assume it satisfies~{\eqref{eqn:form210}.}
\begin{enumerate}
 \item  If $s\le\tau_0$ then $f\in B^1_{-t}$, for every $t>0$.
 \item  If $s>\tau_0$ then $f\in B^1_{-(s-\tau_0)}$.
\end{enumerate}
\end{lem}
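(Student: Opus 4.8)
The plan is to start from equation \eqref{eqn:form21}, which expresses $f$ as the sum of three terms, and to track how membership in $B^1_{-s}$ propagates through each. Since $(1-\chi)f$ is smooth with compact support in $\B$ and $\frac{\chi}{\varphi}h\in G^{\tau_0}_{\tau,k}\subset L^\infty(\B)\subset B^1_{-\e}$ for every $\e>0$ (indeed both belong to $B^1_{-t}$ for all $t>0$), the only term that can limit the regularity of $f$ is $T:=\frac{\chi}{\varphi}\K^N(f\bar\p\varphi)$. Because $\frac{\chi}{\varphi}\in G^{\tau_0}_{\tau,k}\subset L^\infty(\B)$ and multiplication by an $L^\infty$ function preserves each $B^1_{-t}$ (more precisely each $L^p_\delta$), it suffices to estimate $\K^N(f\bar\p\varphi)$ in the weighted norm $\|\cdot\|_{L^1_{N_0'}}$ for a suitable $N_0'$.

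The key pointwise bound is the one already recorded just before Theorem \ref{thm:kerT1}: combining \eqref{eqn:estimates:P:K} with \eqref{eqn:estimate:kernel:K} and the fact that $\widetilde\varphi(w)\lesssim\|\varphi\|_{G^{\tau_0}_{\tau,k}}(1-|w|^2)^{\tau_0}$ gives
$$
|\K^N(f\bar\p\varphi)(z)|\lesssim \K^N_{N-n+1,n-1/2}(|f|\,\widetilde\varphi)(z)\lesssim \K^{N+\tau_0}_{N-n+1,n-1/2}(|f|)(z).
$$
Now I would apply Lemma \ref{lem:estK}: the kernel $\K^{N+\tau_0}_{N-n+1,n-1/2}$ has type $t=n+(N+\tau_0)-(N-n+1)-2(n-1/2)=\tau_0$. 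So its boundedness properties on weighted $L^1$ spaces are governed by a gain of $\tau_0$ in the weight exponent. Concretely, writing $f\in B^1_{-s}=L^1_s\cap H$, one estimates, using Fubini and Lemma \ref{lem:estK} (applied after pulling the weight $(1-|w|^2)^{s-1}$ into the kernel, which turns $\K^{N+\tau_0}_{N-n+1,n-1/2}$ with measure $d\nu_N$-type weight into $\K^{N+\tau_0-s'}_{\cdot,\cdot}$ with the appropriate shift), that $\K^{N+\tau_0}_{N-n+1,n-1/2}(|f|)\in L^1_{s-\tau_0}$ when $s>\tau_0$, with norm $\lesssim\|f\|_{L^1_s}$; and when $s\le\tau_0$ the resulting kernel has nonnegative type, so $\K^{N+\tau_0}_{N-n+1,n-1/2}(|f|)$ lies in $L^\infty(\B)$ (or at least in $L^1_{-\e+ \text{something}}$ for all admissible exponents), hence in $L^1_{t}$ for every $t>0$, which is statement (i). In either case $T\in L^1_{s-\tau_0}$ (resp.\ $T\in\bigcap_{t>0}L^1_{-t}$), so $f\in B^1_{-(s-\tau_0)}$ (resp.\ $f\in B^1_{-t}$ for all $t>0$), proving (ii) and (i).

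The main obstacle is bookkeeping the exact relation between the weight exponent in the $L^1_\delta$-norm and the parameters of the kernel $\K^N_{M,L}$ so that Lemma \ref{lem:estK} (or the Schur-type Lemma \ref{lem:estK:Lp}) applies cleanly; in particular one must check that all exponents stay in the admissible ranges ($L<n$, positive first index, and the constraint $0<s<N+\tau_0$ guarantees exactly that $s-\tau_0<N$, so the shifted kernel is still of the allowed form, while $s<N+\tau_0$ keeps $\K^N(f\bar\p\varphi)$ pointwise well-defined as noted in the text). The holomorphy of $f$ and of $\K^N(f\bar\p\varphi)$ is needed only to identify the weighted-$L^1$ membership with membership in the Besov space $B^1_{-t}$; the analytic core is the single kernel estimate plus Lemma \ref{lem:estK}. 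Iterating part (ii) finitely many times then drives $s$ below $\tau_0$ and part (i) finishes the step, but that iteration belongs to the next lemma rather than this one.
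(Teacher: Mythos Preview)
Your plan is correct and follows the same route as the paper: start from \eqref{eqn:form21}, reduce to estimating $\K^N(f\bar\p\varphi)$ via the pointwise bound $|\K^N(f\bar\p\varphi)|\lesssim\K^{N+\tau_0}_{N-n+1,n-1/2}(|f|)$, then integrate against the weight $(1-|z|^2)^{t-1}$, apply Fubini, and use Lemma~\ref{lem:estK} on the inner integral to obtain $\|\K^N(f\bar\p\varphi)\|_{L^1_t}\lesssim\|f\|_{L^1_s}$ whenever $s\le t+\tau_0$, which yields both cases. One small correction: when $s\le\tau_0$ the operator does \emph{not} send $|f|$ into $L^\infty$ (since $|f|$ need not be bounded); the paper's argument stays at the $L^1$ level throughout, and your hedged version (``in $L^1_t$ for every $t>0$'') is the right statement.
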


\begin{proof}
First note that~{\eqref{eqn:estimates:P:K}} shows that 
$$
|\K^N(f\bar\p\varphi)|=|\K^N(\bar\p(f\varphi))|\lesssim \K^N_{N-n+1,n-\frac12}(|f|\pphi),
$$
and so 
\begin{equation}\label{eqn:KN4}
|\K^N(f\bar\p\varphi)|\lesssim\|\varphi\|_{G^{\tau_0}_{\tau,k}}\K^{N+\tau_0}_{N-n+1,n-1/2}(|f|).
\end{equation}
By integrating and using Fubini's Theorem, for any $t>0$ we have that
$$
\|\K^N(f\bar\p\varphi)\|_{L^1_t}\lesssim
\|\varphi\|_{G^{\tau_0}_{\tau,k}}\int_{\B}|f(w)|g_t(w)\,d\nu(w),
$$
where 
$$
g_t(w)=(1-|w|^2)^{N+\tau_0-1}\int_{\B}\K^t_{N-n+1,n-1/2}(z,w)\,d\nu(z).
$$
Now Lemmas~{\ref{lem:estK}} and~{\ref{lem:poteomegas}} show that 
\begin{equation}\label{eqn:gestimate}
g_t(w)\lesssim(1-|w|^2)^{N+\tau_0-1}\omega_{t-N}(w)\lesssim(1-|w|^2)^{s-1},
\end{equation}
provided that $s\le t+\tau_0$. (recall that $s<N+\tau_0$). 
Therefore, if $s\le t+\tau_0$,
\begin{equation}\label{eqn:lem41}
\|\K^N(f\bar\p\varphi)\|_{L^1_t}\lesssim\|\varphi\|_{G^{\tau_0}_{\tau,k}}\|f\|_{L^1_s},
\end{equation}
so, by~{\eqref{eqn:form21}} and \eqref{eqn:regh}, $f\in B^1_{-t}$. 
We conclude that:\vspace*{4pt}

\noindent
(i) If $s\le\tau_0$ then $s\le t+\tau_0$ and~\eqref{eqn:gestimate} holds for every $t>0$, and hence
 $f\in B^1_{-t}$, for every $t>0$.
\vspace*{6pt}

\noindent
(ii) If $s>\tau_0$ then~\eqref{eqn:gestimate} holds for $t=s-\tau_0$, and consequently
$f\in B^1_{-(s-\tau_0)}$.
\end{proof}

\begin{lem}\label{lem2:thm:kerT1} 
If $f\in B^1_{-N_0}$ satisfies~{\eqref{eqn:form210}} then $f\in B^1_{-t}$, for every $t>0$.
\end{lem}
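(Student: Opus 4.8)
The plan is to bootstrap from Lemma~\ref{lem1:thm:kerT1} by iterating its second alternative. Given that $f\in B^1_{-N_0}$ satisfies \eqref{eqn:form210}, we apply Lemma~\ref{lem1:thm:kerT1} with $s=N_0$. If $N_0\le\tau_0$ we are immediately done, since then part~(i) gives $f\in B^1_{-t}$ for every $t>0$. Otherwise part~(ii) yields $f\in B^1_{-(N_0-\tau_0)}$, and we repeat the argument with the new exponent $s_1=N_0-\tau_0$. Since $0<s_1<N_0<N+\tau_0$, the hypotheses of Lemma~\ref{lem1:thm:kerT1} are still met, so either $s_1\le\tau_0$ and we finish, or we descend to $s_2=s_1-\tau_0=N_0-2\tau_0$.

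Iterating, after $j$ steps we reach $s_j=N_0-j\tau_0$, and each step is legitimate as long as $s_{j-1}>\tau_0$ (which in particular keeps $s_{j-1}>0$ and $s_{j-1}<N+\tau_0$). Because $\tau_0>0$ is fixed and we subtract it at each stage, there is a first index $j_0$ with $s_{j_0-1}=N_0-(j_0-1)\tau_0\le\tau_0$; concretely one may take $j_0=\lceil N_0/\tau_0\rceil$. At that stage part~(i) of Lemma~\ref{lem1:thm:kerT1} applies and gives $f\in B^1_{-t}$ for every $t>0$, which is the assertion. One should note in passing that at intermediate stages, applying \eqref{eqn:form21} and \eqref{eqn:regh}, the relevant $B^1_{-t}$-norm of $f$ is controlled by $\|f\|_{B^1_{-N_0}}+\|h\|_{B^\infty_\tau}$ up to a constant depending only on $\varphi$, $N$ and $n$, so the quantitative bound propagates through the finitely many steps.

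I do not expect any genuine obstacle here: the statement is precisely the finite-iteration consequence of the two-case dichotomy already proved in Lemma~\ref{lem1:thm:kerT1}. The only point requiring a line of care is checking that the exponent stays in the admissible range $(0,N+\tau_0)$ throughout the induction, which is automatic since $s_j<s_{j-1}<\dots<N_0<N+\tau_0$, and that the descent terminates after finitely many steps, which holds because the exponents decrease by the fixed positive amount $\tau_0$ and cannot drop below $0$ without first falling into the interval $(0,\tau_0]$ where case~(i) closes the argument.

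\begin{proof}
We argue by finite induction using Lemma~\ref{lem1:thm:kerT1}. Set $s_0:=N_0$ and, as long as $s_{j-1}>\tau_0$, put $s_j:=s_{j-1}-\tau_0$. Since $f\in B^1_{-s_0}$ satisfies \eqref{eqn:form210} and $0<s_0<N+\tau_0$, Lemma~\ref{lem1:thm:kerT1} applies with $s=s_0$: if $s_0\le\tau_0$, part~(i) gives $f\in B^1_{-t}$ for every $t>0$ and we are done; otherwise part~(ii) gives $f\in B^1_{-s_1}$ with $0<s_1=s_0-\tau_0<s_0<N+\tau_0$. Repeating, suppose $f\in B^1_{-s_j}$ with $0<s_j<N+\tau_0$. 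Lemma~\ref{lem1:thm:kerT1} applied with $s=s_j$ either terminates the process (if $s_j\le\tau_0$, by part~(i)) or yields $f\in B^1_{-s_{j+1}}$, $0<s_{j+1}=s_j-\tau_0<s_j<N+\tau_0$ (by part~(ii)). Since the $s_j$ strictly decrease by the fixed amount $\tau_0>0$, after at most $\lceil N_0/\tau_0\rceil$ steps we reach an index $j$ with $s_j\le\tau_0$, at which point part~(i) of Lemma~\ref{lem1:thm:kerT1} gives $f\in B^1_{-t}$ for every $t>0$.
\end{proof}
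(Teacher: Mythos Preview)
Your proof is correct and follows essentially the same approach as the paper: iterate Lemma~\ref{lem1:thm:kerT1}(ii) to step the exponent down by $\tau_0$ each time until it lands in $(0,\tau_0]$, then invoke part~(i) to conclude. Your added remark that the norm bound propagates through the finitely many steps matches the paper's Remark~\ref{rem:constantstep1}.
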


\begin{proof}
For $N_0\le\tau_0$ the result  follows directly from Lemma~{\ref{lem1:thm:kerT1} \textbf{(i)}}. So assume that $N_0>\tau_0$.
 Let $k$ be the greatest positive integer such that $k\tau_0<N_0$.  Then $k\tau_0<N_0\le(k+1)\tau_0$. Now, since $f\in B^1_{-N_0}$, Lemma~{\ref{lem1:thm:kerT1}\textbf{(ii)}} implies that $f\in B^1_{-(N_0-\tau_0)}$, so $f\in B^1_{-(N_0-2\tau_0)}$, \dots, so $f\in B^1_{-(N_0-k\tau_0)}$. But $0<N_0-k\tau_0\le\tau_0$ and therefore Lemma~{\ref{lem1:thm:kerT1}\textbf{(i)}} shows that $f\in B^1_{-t}$, for every $t>0$.
\end{proof}

\begin{rem}\label{rem:constantstep1}
Observe that  the above arguments, \eqref{eqn:form21} and \eqref{eqn:lem41}  give in particular the estimate
$\|f\|_{B^1_{-t}}\lesssim \|f\|_{B^1_{-N_0}}+\|h\|_{B^\infty_\tau}$.
\end{rem}
\vspace*{12pt}

\noindent
{\bf\sc Step 2.} 
 The next couple of lemmas will show that the function $f$ is in $B^\infty_{-t}$, for any $t>0$. We follow the ideas in \cite{KD}.
 
\begin{lem}\label{lem3:thm:kerT1} 
Let $f\in B^p_{-s}$, for some $1\le p<\infty$ and for every $s>0$.
 If $f$ satisfies~{\eqref{eqn:form210}} then $f\in B^q_{-s}$, for every $s>0$ and for every $q$ 
such that $p<q<\infty$ and $\frac1p-\frac{\tau_0}n<\frac1q$.
\end{lem}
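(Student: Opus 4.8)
The plan is to bootstrap the integrability of $f$ from $L^p$ to $L^q$ by exploiting the gain in the kernel $\K^N_{N-n+1,n-1/2}$ together with a H\"older/Sobolev-type embedding. Starting from \eqref{eqn:form21} and \eqref{eqn:KN4}, the only term we must control is $\frac{\chi}{\varphi}\K^N(f\bar\p\varphi)$, and up to the bounded factor $\chi/\varphi\in G^{\tau_0}_{\tau,k}\subset L^\infty$ it suffices to estimate $\K^{N+\tau_0}_{N-n+1,n-1/2}(|f|)$ in $B^q_{-s}=L^q_s\cap H$ for the relevant $s$. Since the left-hand side of \eqref{eqn:form21} and the other two terms are already as regular as we need (the $G^{\tau_0}_{\tau,k}$ term sits in every $B^q_{-s}$ with $s>0$ by Lemma~\ref{lem:embedG} and the trivial embeddings, and $(1-\chi)f\in\mathcal C^\infty_c$), everything reduces to a mapping property of a single positive integral operator.

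First I would fix $s>0$ and $q$ with $p<q<\infty$ and $\tfrac1p-\tfrac{\tau_0}n<\tfrac1q$, and pick $\e>0$ small so that $\tfrac1p-\tfrac{\tau_0-\e}{n}\le\tfrac1q$ still holds and $\e<N+\tau_0$. Then I would apply Lemma~\ref{lem:HoldK} (with $N+\tau_0$ in place of $N+\tau$, so the "$\tau$" there equals $\tau_0$), raising to the power $p$:
\begin{equation*}
\left(\K^{N+\tau_0}_{N-n+1,n-1/2}(|f|)\right)^p\lesssim \K^{(N+\tau_0-\e)p}_{Np-n+1,n-1/2}(|f|^p).
\end{equation*}
The operator on the right has first parameter $(N+\tau_0-\e)p$, and one checks its type is $t=n+(N+\tau_0-\e)p-(Np-n+1)-(2n-1)=(\tau_0-\e)p>0$; in particular it is a kernel of positive type. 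Now I would invoke the $L^p\to B^p$ gain: by Lemma~\ref{lem:estK} a kernel of type $t>0$ integrates against $1$ to something $\lesssim (1-|z|^2)^t$, and more precisely the operator $\K^{(N+\tau_0-\e)p}_{Np-n+1,n-1/2}$ should map $L^1_{\delta}$ continuously into $L^1_{\delta-(\tau_0-\e)p}$ type spaces; combined with the holomorphy of $\K^N(f\bar\p\varphi)$ (it is the holomorphic part in the decomposition \eqref{eqn:rep}) this lands $\K^N(f\bar\p\varphi)$ in the holomorphic Besov space $B^p_{-(s-(\tau_0-\e))}$. Finally, the holomorphic Besov embedding Proposition~\ref{prop:embed}\eqref{item:embed3}, with the exponent arithmetic $-(s-(\tau_0-\e))+n/p\le -s+n/q$ guaranteed by our choice of $\e$, yields $\K^N(f\bar\p\varphi)\in B^q_{-s}$, and hence $f\in B^q_{-s}$ via \eqref{eqn:form21}.

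The main obstacle I expect is the bookkeeping of the exponents: one must verify that after the H\"older step the resulting kernel has strictly positive type (so that Lemma~\ref{lem:estK} gives a genuine gain in the weight), and then that the Besov embedding Proposition~\ref{prop:embed}\eqref{item:embed3} is applicable with parameters that deliver exactly the loss $\tfrac1p-\tfrac1q<\tfrac{\tau_0}n$ allowed in the statement — the strict inequality in the hypothesis is precisely what creates the room to absorb the $\e$. A secondary technical point is making sure the pointwise bound \eqref{eqn:estimates:P:K}, $|\K^N(f\bar\p\varphi)|\lesssim\K^N_{N-n+1,n-1/2}(\widetilde{\varphi f})\lesssim\|\varphi\|_{G^{\tau_0}_{\tau,k}}\K^{N+\tau_0}_{N-n+1,n-1/2}(|f|)$, together with the hypothesis $f\in B^p_{-s}$ for \emph{every} $s>0$, legitimizes all the integrals (Fubini, dominated convergence) — this is exactly why the lemma is phrased with the quantifier "for every $s>0$" rather than for a fixed $s$, and I would use a value of $s$ large enough that the $L^p_s$ norm of $f$ controls everything, then run the argument for arbitrary target $s$.
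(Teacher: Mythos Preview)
There is a genuine gap. The claim that $\K^N(f\bar\p\varphi)$ is holomorphic is false: in the decomposition \eqref{eqn:rep}, the holomorphic piece is $\mathcal P^N(\varphi f)$, while $\K^N(\bar\p(\varphi f))=\varphi f-\mathcal P^N(\varphi f)$ is precisely the non-holomorphic remainder. Consequently Proposition~\ref{prop:embed}\eqref{item:embed3}, which is an embedding between \emph{holomorphic} Besov spaces, cannot be applied to $\K^N(f\bar\p\varphi)$.

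You might try to repair this by first passing back to $f$ through \eqref{eqn:form21} and then using the embedding on the holomorphic function $f$. But that route is circular: the kernel gain places $\dfrac{\chi}{\varphi}\K^N(f\bar\p\varphi)$ only in $L^p_{\delta}$ for $\delta>0$, hence via \eqref{eqn:form21} one obtains $f\in B^p_{-\delta}$ for all $\delta>0$ --- which is exactly the hypothesis. No positive-index information (i.e.\ $f\in B^p_t$ with $t>0$) is produced by a pointwise estimate on $|\K^N(f\bar\p\varphi)|$, so the embedding $B^p_t\subset B^q_{t-n(1/p-1/q)}$ never gets off the ground.

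The paper's argument avoids this by applying Lemma~\ref{lem:HoldK} with exponent $q$ rather than $p$, obtaining
\[
|\K^N(f\bar\p\varphi)|^q\lesssim \K^{(N+\tau_0-\e)q}_{Nq-n+1,n-1/2}(|f|^q),
\]
and then handling the appearance of $|f|^q$ by writing $|f|^q=|f|^{q-p}|f|^p$ and estimating the factor $|f|^{q-p}$ \emph{pointwise} via the embedding $B^p_{-s}\subset B^\infty_{-s-n/p}$, i.e.\ $|f(w)|\lesssim\|f\|_{B^p_{-s}}(1-|w|^2)^{-s-n/p}$. One then integrates against $(1-|z|^2)^{sq-1}$, applies Fubini and Lemma~\ref{lem:estK}, and checks that for $s,\e>0$ small the resulting exponent satisfies the required inequality precisely when $\tfrac1p-\tfrac1q<\tfrac{\tau_0}n$. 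The holomorphic embedding is thus used on $f$ at the \emph{pointwise} level inside the integral, not as a space-to-space embedding applied to $\K^N(f\bar\p\varphi)$.
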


\begin{proof}
If $-t<-s<0$, then the space $B_{-s}^p\subset B_{-t}^p$. Consequently, we only have to prove the lemma, for $s$ sufficiently small. Let $p<q<\infty$ and $0<\e<N+\tau_0$. Assume $f$ satisfies~{\eqref{eqn:form210}}. Then, as we have shown in the proof of Lemma~{\ref{lem1:thm:kerT1}}, \eqref{eqn:KN4} holds, and so Lemma~{\ref{lem:HoldK}} gives
$$
|\K^N(f\bar\p\varphi)|^q\lesssim 
\|\varphi\|_{G^{\tau_0}_{\tau,k}}^q \K^{(N+\tau_0-\e)q}_{Nq-n+1,n-\frac12}(|f|^q).
$$
By Proposition~{\ref{prop:embed}(iii)}, $B^p_{-s}\subset B^\infty_{-s-n/p}$ and 
 $|f(w)|\lesssim \|f\|_{B^p_{-s}}(1-|w|^2)^{-s-\frac{n}p}$, which implies that
$$
|f(w)|^q=|f(w)|^{q-p}|f(w)|^p\lesssim
\|f\|_{B^p_{-s}}^{q-p}(1-|w|^2)^{(p-q)(s+\frac{n}p)}|f(w)|^p,
$$
and, by integrating, we get
$$
\K^{(N+\tau_0-\e)q}_{Nq-n+1,n-\frac12}(|f|^q)\lesssim
\|f\|_{B^p_{-s}}^{q-p}\K^{N(\e,s)}_{M,L}(|f|^p),
$$
where   
$N(\e,s)=(N+\tau_0-\e)q+(p-q)\left(s+\frac{n}p\right)
=sp+(N-s)q+nq\left(\frac{\tau_0-\varepsilon}{n}-\frac1{p}+\frac1{q}\right)$,
$M=Nq-n+1$ and $L=n-1/2$.

Therefore 
$$
\|\K^N(f\bar\p\varphi)\|_{L^q_s}\lesssim
                  \|\varphi\|_{G^{\tau_0}_{\tau,k}}\|f\|_{B^p_{-s}}^{1-\frac{p}q}I_{\e,s},
\,\,\mbox{ where }\,\,I_{\e,s}=\|\K^{N(\e,s)}_{M,L}(|f|^p)\|_{L^1_{sq}}.
$$ 
Thus we only have to prove that $I_{\e,s}^q\lesssim\|f\|_{B^p_{-s}}^p$, for $\e,s>0$ small enough and $\frac1p-\frac{\tau_0}n<\frac1q$, because then the previous estimate shows that
$\|\K^N(f\bar\p\varphi)\|_{L^q_s}\lesssim\|\varphi\|_{G^{\tau_0}_{\tau,k}}\|f\|_{B^p_{-s}}$ and 
hence, by~{\eqref{eqn:form21}} and \eqref{eqn:regh}, we conclude that $f\in B^q_{-s}$.
 In order to estimate $I_{\e,s}^q$, first apply Fubini's Theorem to get 
$$I_{\e,s}^q=\int_{\B}|f(w)|^p(1-|w|^2)^{N(\e,s)-1}
\left(\int_{\B}\K^{sq}_{M,L}(z,w)\,d\nu(z)\right)\,d\nu(w),$$
and since $n+sq-M-2L=(s-N)q$, then apply Lemma~{\ref{lem:estK}} to obtain 
$$
I_{\e,s}^q\lesssim \int_{\B}|f(w)|^p(1-|w|^2)^{N(\e,s)-1}\omega_{(s-N)q}(w)\,d\nu(w).
$$
Now we consider two cases:
\vspace*{6pt}

\noindent
{\sf Case $N=0$.} Then $(s-N)q=sq>0$ so 
$$
I_{\e,s}^q\lesssim\int_{\B}|f(w)|^p(1-|w|^2)^{N(\e,s)-1}\,d\nu(w)\le\|f\|^p_{B^p_{-s}},
$$ 
provided that $N(\e,s)>sp$, which holds for $\e,s>0$ small enough and $\frac1{p}-\frac1{q}<\frac{\tau_0}n$, since 
$$
\lim_{\e,s\searrow0}(N(\e,s)-sp)=nq\{\tfrac{\tau_0}n-(\tfrac1p-\tfrac1q)\}>0.
$$

\noindent
{\sf Case $N>0$.} Let $0<s<N$. Then $(s-N)q<0$ and so 
$$
I_{\e,s}^q\lesssim\int_{\B}|f(w)|^p(1-|w|^2)^{N(\e,s)+(s-N)q-1}\,d\nu(w)\le\|f\|^p_{B^p_{-s}},
$$ 
provided that $N(\e,s)+(s-N)q>sp$, which holds for $\e>0$ small enough and $\frac1{p}-\frac1{q}<\frac{\tau_0}n$, since 
$$
N(\e,s)+(s-N)q-sp=nq\{\tfrac{\tau_0-\e}n-(\tfrac1p-\tfrac1q)\}>0.
$$
And the proof is complete.
\end{proof}

\begin{lem}\label{lem4:thm:kerT1} 
Let $f\in B^1_{-s}$, for every $s>0$. If $f$ satisfies~{\eqref{eqn:form210}} then $f\in B^{\infty}_{-t}$, for every $t>0$.
\end{lem}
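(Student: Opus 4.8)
The plan is to bootstrap from the conclusion of Step 1 (via Lemma~\ref{lem3:thm:kerT1}) by iterating the Sobolev-type improvement finitely many times until the exponent $q$ can be taken arbitrarily large, and then to pass from large $q$ to $q=\infty$ using the embedding $B^q_{-s}\subset B^\infty_{-s-n/q}$ from Proposition~\ref{prop:embed}(iii). Concretely, starting from $f\in B^1_{-s}$ for every $s>0$, Lemma~\ref{lem3:thm:kerT1} yields $f\in B^{q_1}_{-s}$ for every $s>0$ and every $q_1$ with $1-\tfrac{\tau_0}{n}<\tfrac1{q_1}$; applying the lemma again with $p=q_1$ gives $f\in B^{q_2}_{-s}$ whenever $\tfrac1{q_1}-\tfrac{\tau_0}{n}<\tfrac1{q_2}$, and so on. Since each step decreases $\tfrac1q$ by an amount approaching (but bounded below away from) $\tfrac{\tau_0}{n}$, after finitely many steps we reach $\tfrac1{q_m}\le 0$ formally, i.e.\ we may take $q_m$ as large as we wish; more carefully, we stop at the first $m$ for which any prescribed large value $q$ is admissible, i.e.\ $\tfrac1{q_{m-1}}-\tfrac{\tau_0}{n}<\tfrac1q$.

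\begin{proof}
By hypothesis and Lemma~\ref{lem2:thm:kerT1}, $f\in B^1_{-s}$ for every $s>0$. Fix $q$ with $1<q<\infty$. Choose a finite increasing sequence $1=q_0<q_1<\dots<q_m=q$ such that
$$
\frac1{q_{i-1}}-\frac1{q_i}<\frac{\tau_0}{n}\qquad(i=1,\dots,m),
$$
which is possible because $\sum_{i=1}^m\big(\tfrac1{q_{i-1}}-\tfrac1{q_i}\big)=1-\tfrac1q<1$ can be split into $m$ pieces each smaller than $\tfrac{\tau_0}{n}$ once $m$ is large enough. Since $f$ satisfies~\eqref{eqn:form210} and $f\in B^{q_0}_{-s}=B^1_{-s}$ for every $s>0$, Lemma~\ref{lem3:thm:kerT1} gives $f\in B^{q_1}_{-s}$ for every $s>0$. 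Iterating, $f\in B^{q_i}_{-s}$ for every $s>0$ and every $i$, so in particular $f\in B^q_{-s}$ for every $s>0$. As $q$ was arbitrary, $f\in B^q_{-s}$ for every $1\le q<\infty$ and every $s>0$.

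Finally, fix $t>0$. Choose $q<\infty$ large enough that $n/q<t/2$, and apply the previous paragraph with $s=t/2$ to get $f\in B^q_{-t/2}$. By Proposition~\ref{prop:embed}(iii) (with $p=q$, using $B^q_{-t/2}\subset B^\infty_{-t/2-n/q}$) we obtain $f\in B^\infty_{-t/2-n/q}$. Since $-t/2-n/q>-t$, Proposition~\ref{prop:embed}(i) yields $f\in B^\infty_{-t}$. As $t>0$ was arbitrary, $f\in B^\infty_{-t}$ for every $t>0$.
\end{proof}

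The main obstacle, already hidden in Lemma~\ref{lem3:thm:kerT1} and hence not re-done here, is the H\"older-type splitting $|f|^q=|f|^{q-p}|f|^p$ combined with the kernel estimate of Lemma~\ref{lem:HoldK} and the careful bookkeeping of the type of the resulting kernel; at the level of the present lemma the only real point is to verify that the gain $\tfrac{\tau_0}{n}$ per iteration is genuinely bounded below (it is, since $\varepsilon$ and $s$ may be taken arbitrarily small), so that finitely many steps suffice to exhaust all finite exponents. The passage $q\to\infty$ is then immediate from the Besov embedding theorem quoted in Proposition~\ref{prop:embed}, and costs only an arbitrarily small loss $n/q$ in the weight, which is absorbed by the freedom in $s$ and the monotonicity $B^\infty_{s'}\subset B^\infty_s$ for $s'>s$.
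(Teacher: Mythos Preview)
Your proof is correct and follows essentially the same approach as the paper: iterate Lemma~\ref{lem3:thm:kerT1} finitely many times to reach arbitrarily large exponents $q$, then apply the embedding $B^q_{-s}\subset B^\infty_{-s-n/q}$ from Proposition~\ref{prop:embed}(iii). The only differences are cosmetic---the paper fixes the step size at $\tau_0/(2n)$ and writes down explicit intermediate exponents $p_j=(1-j\tau_0/(2n))^{-1}$, whereas you choose an abstract partition of $1-1/q$---and your invocation of Lemma~\ref{lem2:thm:kerT1} at the start is redundant since the hypothesis already gives $f\in B^1_{-s}$ for all $s>0$.
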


\begin{proof}
Let $k$ be the greatest positive integer such that $k\frac{\tau_0}{2n}<1$.  
Then $k\frac{\tau_0}{2n}<1\le(k+1)\frac{\tau_0}{2n}$. 
Let 
$$
p_j=\frac1{1-j\frac{\tau_0}{2n}}\qquad(j=0,\dots,k).
$$
Then $p_j\ge1$, for $j=0,\dots,k$, and 
$\frac1{p_j}=\frac1{p_{j-1}}-\frac{\tau_0}{2n}>\frac1{p_{j-1}}-\frac{\tau_0}{n}$, for $j=1,\dots,k$. 
Now, since $f\in B^{p_0}_{-s}$, for every $s>0$, and $f$ satisfies~{\eqref{eqn:form210}}, Lemma~{\ref{lem3:thm:kerT1}} shows that $f\in B^{p_1}_{-s}$ so $f\in B^{p_2}_{-s}$, \dots, so 
$f\in B^{p_k}_{-s}$, for every $s>0$. But $\frac1{p_k}-\frac{\tau_0}{2n}=1-(k+1)\frac{\tau_0}{2n}\le0$ and therefore Lemma~{\ref{lem3:thm:kerT1}} once again shows that $f\in B^q_{-s}$, for every $q>p_k$ and every $s>0$. Since $B^q_{-s}\subset B^{\infty}_{-s-\frac{n}q}$, by Proposition~{\ref{prop:embed}(\ref{item:embed3})}, we conclude that $f\in B^{\infty}_{-t}$, for every $t>0$.
\end{proof}

\begin{rem}\label{rem:constantstep2}
Observe that  the above arguments and   \eqref{eqn:form21} give the estimate
$\|f\|_{B^\infty_{-t}}\lesssim\|f\|_{B^1_{-t}}+\|h\|_{B^\infty_\tau}$.
\end{rem}
\vspace*{12pt}

\noindent
{\bf\sc Step 3.} 
In what follows we will finally deduce that $f\in B_\tau^\infty$.

\begin{lem}\label{lem5:thm:kerT1} 
 Let $f\in B^{\infty}_{-t}$, for every $t>0$.
 If $f$ satisfies~{\eqref{eqn:form210}} then $f\in H^{\infty}$.
\end{lem}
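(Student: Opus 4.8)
The plan is to exploit the factored identity~\eqref{eqn:form21}, which reduces the problem to controlling the single term $\frac{\chi}{\varphi}\K^N(f\bar\p\varphi)$. The other two summands of~\eqref{eqn:form21} cause no trouble: $\frac{\chi}{\varphi}h$ lies in $G^{\tau_0}_{\tau,k}\subset L^\infty(\B)$ (by Proposition~\ref{prop:prodGs}, as already observed in the set-up of this section), while $(1-\chi)f$ is a smooth function with compact support in $\B$, hence bounded. Since $\frac{\chi}{\varphi}$ is bounded as well, everything follows once we show $\K^N(f\bar\p\varphi)\in L^\infty(\B)$, because then $f\in L^\infty(\B)\cap H=H^\infty$.

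To estimate $\K^N(f\bar\p\varphi)$ I would start from the pointwise bound~\eqref{eqn:KN4} obtained in the proof of Lemma~\ref{lem1:thm:kerT1}, namely $|\K^N(f\bar\p\varphi)|\lesssim\|\varphi\|_{G^{\tau_0}_{\tau,k}}\,\K^{N+\tau_0}_{N-n+1,n-1/2}(|f|)$, and then fix a number $t$ with $0<t<\tau_0$. Because $f\in B^\infty_{-t}$, we have $|f(w)|\lesssim\|f\|_{B^\infty_{-t}}\,(1-|w|^2)^{-t}$; absorbing this factor into the weight $(1-|w|^2)^{N+\tau_0-1}$ of the kernel converts the last display into
\[
|\K^N(f\bar\p\varphi)(z)|\lesssim\|\varphi\|_{G^{\tau_0}_{\tau,k}}\,\|f\|_{B^\infty_{-t}}\,\K^{N+\tau_0-t}_{N-n+1,n-1/2}(1)(z),
\]
and we note $N+\tau_0-t>0$ (since $t<\tau_0\le N+\tau_0$), so this new kernel is admissible.

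The decisive point is the type count for $\K^{N+\tau_0-t}_{N-n+1,n-1/2}$: its type equals $n+(N+\tau_0-t)-(N-n+1)-2(n-\tfrac12)=\tau_0-t$, which is strictly positive by the choice of $t$. Hence Lemma~\ref{lem:estK} gives $\K^{N+\tau_0-t}_{N-n+1,n-1/2}(1)(z)\lesssim\omega_{\tau_0-t}(z)\equiv1$, so $\K^N(f\bar\p\varphi)$ is bounded on $\B$ and $f\in H^\infty$, with a quantitative bound $\|f\|_{\infty}\lesssim\|f\|_{B^\infty_{-t}}+\|h\|_{B^\infty_\tau}$ in the spirit of Remarks~\ref{rem:constantstep1} and~\ref{rem:constantstep2}. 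I do not expect a real obstacle here; the only thing requiring care is the bookkeeping of kernel exponents and, above all, choosing $t$ strictly below $\tau_0$ so that the resulting type stays positive — this is precisely what yields the clean bound $\omega_{\tau_0-t}\equiv1$ instead of a logarithmic factor or a negative power that would fail to be bounded.
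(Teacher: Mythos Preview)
Your argument is correct and follows essentially the same route as the paper's proof: starting from~\eqref{eqn:KN4}, absorbing the factor $(1-|w|^2)^{-t}$ coming from $f\in B^\infty_{-t}$ into the kernel, and then invoking Lemma~\ref{lem:estK} with positive type $\tau_0-t$ to conclude boundedness of $\K^N(f\bar\p\varphi)$. The only difference is cosmetic---you spell out the type computation and the use of~\eqref{eqn:form21} in more detail than the paper does.
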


\begin{proof}
Since $f$ satisfies~{\eqref{eqn:form210}}, \eqref{eqn:KN4} holds, as we have shown in the proof of Lemma~{\ref{lem1:thm:kerT1}}. But
$$
\K^{N+\tau_0}_{N-n+1,n-\frac12}(|f|)\lesssim\|f\|_{B^{\infty}_{-t}}\K^{N+\tau_0-t}_{N-n+1,n-\frac12}(1)
$$
and, by Lemma~{\ref{lem:estK}}, $\K^{N+\tau_0-t}_{N-n+1,n-\frac12}(1)\lesssim\omega_{\tau_0-t}\lesssim 1$, for any $0<t<\tau_0$. Therefore 
$\|\K^N(f\bar\p\varphi)\|_{\infty}\lesssim\|\varphi\|_{G^{\tau_0}_{\tau,k}}\|f\|_{B^{\infty}_{-t}}$,
and, by~{\eqref{eqn:form210}} and  \eqref{eqn:regh} $f\in H^\infty$.
\end{proof}

In order to prove that $f\in B^\infty_\tau$, we will use the following formula.

\begin{lem} \label{lem:der35} 
Let $N\ge 0$, $\varphi\in G^{\tau_0}_{\tau,k}$ and $f\in H^\infty$. Then 
\begin{equation}\label{eqn:repder}
\varphi \p_\alpha f=\p_\alpha \mathcal{P}^{N}(\varphi f)-\sum_{\substack {\beta+\gamma=\alpha\\|\beta|<k}} c_{\alpha,\beta} \mathcal{P}^{N+k}(\p_\gamma\varphi \p_\beta f)+\K^{N+k}(\bar\p\varphi\p_\alpha f),
\end{equation}
for every $\alpha\in\N^n$, where $k=|\alpha|$ and $c_{\alpha,\beta}=\alpha!/(\beta!\gamma!)$.
\end{lem}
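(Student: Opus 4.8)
The plan is to prove the differentiation formula \eqref{eqn:repder} by starting from the decomposition \eqref{eqn:rep} applied to $\psi=\varphi f$, differentiating with $\p_\alpha$, and then using the product rule together with Lemma~\ref{lem:derPN}. Since $f\in H^\infty$ and $\varphi\in G^{\tau_0}_{\tau,k}$ with $k>\tau>0$, the function $\varphi f$ is of class $\mathcal{C}^k$ on $\B$ and bounded on $\bar\B$, so (by Remark~\ref{rem:thm:rep} and Remark~\ref{rem:derPN}, i.e.\ passing through the dilations $\psi_r(z)=\varphi(rz)f(rz)$ and dominated convergence) all the operations below are justified in this generality.

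First I would write, from \eqref{eqn:rep}, $\varphi f=\mathcal{P}^N(\varphi f)+\K^N(\bar\p(\varphi f))$, and note that since $f$ is holomorphic, $\bar\p(\varphi f)=f\,\bar\p\varphi$. Applying $\p_\alpha$ to both sides and using the Leibniz rule on the left,
\begin{equation*}
\sum_{\beta+\gamma=\alpha} c_{\alpha,\beta}\,\p_\gamma\varphi\,\p_\beta f
=\p_\alpha\mathcal{P}^N(\varphi f)+\p_\alpha\K^N(f\,\bar\p\varphi).
\end{equation*}
The term in the Leibniz sum with $\gamma=0$ (equivalently $\beta=\alpha$) is exactly $\varphi\,\p_\alpha f$ (here $c_{\alpha,\alpha}=1$), so isolating it gives
\begin{equation*}
\varphi\,\p_\alpha f=\p_\alpha\mathcal{P}^N(\varphi f)-\sum_{\substack{\beta+\gamma=\alpha\\ |\beta|<k}} c_{\alpha,\beta}\,\p_\gamma\varphi\,\p_\beta f+\p_\alpha\K^N(f\,\bar\p\varphi),
\end{equation*}
since the condition $\gamma\ne0$ on the multi-index is the same as $|\beta|<|\alpha|=k$.

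Next I would rewrite the two transformed integral terms using Lemma~\ref{lem:derPN}. For the last term, apply Lemma~\ref{lem:derPN}\eqref{item:derPN2} to $\psi=\varphi f$: since $\bar\p\p_\alpha(\varphi f)=\p_\alpha\bar\p(\varphi f)=\p_\alpha(f\,\bar\p\varphi)$ and $f$ is holomorphic, one gets $\p_\alpha\K^N(\bar\p(\varphi f))=\K^{N+k}(\bar\p\p_\alpha(\varphi f))=\K^{N+k}(\bar\p\varphi\,\p_\alpha f)$ (the Leibniz expansion of $\bar\p\p_\alpha(\varphi f)$ collapses because $\bar\p$ kills $f$ and all its holomorphic derivatives). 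For the first term, applying Lemma~\ref{lem:derPN}\eqref{item:derPN1} gives $\p_\alpha\mathcal{P}^N(\varphi f)=\mathcal{P}^{N+k}(\p_\alpha(\varphi f))$; expanding $\p_\alpha(\varphi f)$ by Leibniz and peeling off the $\gamma=0$ term $\varphi\,\p_\alpha f$ recombines, after substituting back, into the stated identity — alternatively, one simply keeps $\p_\alpha\mathcal{P}^N(\varphi f)$ as written in \eqref{eqn:repder} and only the remaining $\p_\gamma\varphi\,\p_\beta f$ terms with $|\beta|<k$ need to be recognized, which is precisely the middle sum. This yields \eqref{eqn:repder} with the constants $c_{\alpha,\beta}=\alpha!/(\beta!\gamma!)$ coming directly from the multinomial Leibniz formula.

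The only genuine subtlety — and the step I would flag as the main point to be careful with — is the justification of Lemma~\ref{lem:derPN} for the non-smooth function $\psi=\varphi f$ (which is merely $\mathcal{C}^k$ up to the boundary in a weak sense, with $\p^k\varphi$ controlled only by $\omega_{\tau-k}$, hence possibly unbounded near $\bB$). This is handled exactly as in Remark~\ref{rem:derPN}: one applies the already-established smooth version of Lemma~\ref{lem:derPN} to the dilates $\psi_r$, notes that $\p_\alpha\mathcal{P}^N(\psi_r)\to\p_\alpha\mathcal{P}^N(\psi)$ and $\p_\alpha\K^N(\bar\p\psi_r)\to\p_\alpha\K^N(\bar\p\psi)$ uniformly on compact subsets of $\B$ (using the estimates \eqref{eqn:estimates:P:K}, \eqref{eqn:estimate:kernel:K} and the $G^{\tau_0}_{\tau,k}$-bounds on $\varphi$ to dominate the integrands, with $f\in H^\infty$ giving the needed $L^\infty$ control), and passes to the limit $r\nearrow1$. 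Everything else is bookkeeping with the Leibniz rule.
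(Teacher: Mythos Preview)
Your argument contains a genuine error in the handling of the $\K$-term, and a related oversight in the middle sum.

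You claim that $\p_\alpha\K^N(\bar\p(\varphi f))=\K^{N+k}(\bar\p\p_\alpha(\varphi f))=\K^{N+k}(\bar\p\varphi\,\p_\alpha f)$, arguing that ``the Leibniz expansion of $\bar\p\p_\alpha(\varphi f)$ collapses because $\bar\p$ kills $f$ and all its holomorphic derivatives.'' But this is not what happens: writing $\bar\p\p_\alpha(\varphi f)=\p_\alpha(f\,\bar\p\varphi)$ and expanding by Leibniz gives
\[
\bar\p\p_\alpha(\varphi f)=\sum_{\beta+\gamma=\alpha} c_{\alpha,\beta}\,(\p_\gamma\bar\p\varphi)\,\p_\beta f,
\]
and the terms with $|\gamma|\ge1$ involve $\bar\p(\p_\gamma\varphi)$, which has no reason to vanish for a general $\varphi\in G^{\tau_0}_{\tau,k}$. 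The holomorphicity of $f$ only kills terms where $\bar\p$ lands on an $f$-factor; here $\bar\p$ has already landed on $\varphi$. Simultaneously, the middle sum you obtain is $-\sum_{|\beta|<k} c_{\alpha,\beta}\,\p_\gamma\varphi\,\p_\beta f$, whereas the target \eqref{eqn:repder} has $-\sum_{|\beta|<k} c_{\alpha,\beta}\,\mathcal{P}^{N+k}(\p_\gamma\varphi\,\p_\beta f)$: these are not the same expression, and your sentence ``which is precisely the middle sum'' glosses over the missing projection.

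Your route can in fact be salvaged: if you apply the representation \eqref{eqn:rep} at weight $N+k$ to each $\p_\gamma\varphi\,\p_\beta f$ in your (unprojected) middle sum, the resulting $\K^{N+k}$-pieces exactly cancel the extra terms $\sum_{|\beta|<k}c_{\alpha,\beta}\K^{N+k}((\bar\p\p_\gamma\varphi)\p_\beta f)$ coming from the correct Leibniz expansion of $\K^{N+k}(\bar\p\p_\alpha(\varphi f))$, leaving precisely \eqref{eqn:repder}. The paper avoids this detour altogether by applying \eqref{eqn:rep} (at weight $N+k$) directly to $\varphi\,\p_\alpha f$, so that $\K^{N+k}(\bar\p\varphi\,\p_\alpha f)$ appears immediately, and then rewriting $\mathcal{P}^{N+k}(\varphi\,\p_\alpha f)$ via Leibniz and Lemma~\ref{lem:derPN}\eqref{item:derPN1}. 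That is the cleaner path.
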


\begin{proof} 
First assume that  $\varphi\in \mathcal{C}^{k}(\bar \B)$ and $f\in H(\bar \B)$. By Theorem \ref{thm:rep} we have that
$\varphi \p_\alpha f=\mathcal{P}^{N+k}(\varphi \p_\alpha f)+\K^{N+k}(\bar\p\varphi\p_\alpha f)$.
Moreover,
$$
\varphi\p_\alpha f =
\p_\alpha(\varphi f)-\sum_{\substack{\beta+\gamma=\alpha\\|\beta|<k}}c_{\alpha,\beta}\p_\gamma\varphi \p_\beta f,
$$ 
and Lemma~{\ref{lem:derPN}} shows that $\mathcal{P}^{N+k}( \p_\alpha (\varphi f))=\p_\alpha \mathcal{P}^{N}(\varphi f)$.
 Hence we obtain~{\eqref{eqn:repder}.}

By a standard approximation argument (based on the dominated convergence theorem), we deduce the general case 
from the regular case just proved above.
\end{proof}

\begin{lem}\label{lem6:thm:kerT1} \blankline
\begin{enumerate}
\item  If $f\in H^{\infty}$ satisfies~{\eqref{eqn:form210}}, then for every $0<t\le\tau_0$, $f\in B^{\infty}_t$.
\item  Let $f\in B^{\infty}_s$, for some $0<s<\tau$. If $f$ satisfies~{\eqref{eqn:form210}}, then for every $0<t\le\min(\tau,s+\tau_0)$, $f\in B^{\infty}_t$.
\end{enumerate}
\end{lem}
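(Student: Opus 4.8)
The plan is to prove both parts of Lemma~\ref{lem6:thm:kerT1} by the same mechanism: differentiate the equation \eqref{eqn:form210} (rewritten via \eqref{eqn:form21}) the appropriate number of times, use the representation formula of Lemma~\ref{lem:der35} to express $\varphi\,\p_\alpha f$ as a sum of a Bergman-projection term coming from $h$, lower-order terms involving $\p_\gamma\varphi\,\p_\beta f$, and a $\K^{N+k}$-remainder; then estimate each summand by Lemmas~\ref{lem:estK}, \ref{lem:poteomegas} and the defining growth bounds of $G^{\tau_0}_{\tau,k}$. Dividing by $\varphi$ (legitimate on the corona $C$ where $|\varphi|\gtrsim1$, and trivial away from $\bB$ because of the cutoff $\chi$) converts the resulting pointwise bound $|\p_\alpha f(z)|\lesssim(1-|z|^2)^{t-k}$ into $f\in B^\infty_t$.

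For part (i), since $f\in H^\infty$ we already have $|\p^k f(z)|\lesssim(1-|z|^2)^{-k}$ for every $k$; the issue is to gain $(1-|z|^2)^{t}$ with $0<t\le\tau_0$. First I would handle $k=1$: from \eqref{eqn:form21} and Lemma~\ref{lem:der35} (with $\alpha$ of length $1$, $N$ replaced by $N$ in the $\K^N$-term, $k=1$), $\varphi\,\p_j f$ equals $\p_j\mathcal P^N(\varphi f)$ — which is $\p_j(\chi\text{-part of }h)$ plus controlled terms — minus $\mathcal P^{N+1}(\p_j\varphi\, f)$ plus $\K^{N+1}(\bar\p\varphi\,\p_j f)$. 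Using $|\p_j\varphi|\lesssim\omega_{\tau-1}$, $\widetilde\varphi\lesssim(1-|z|^2)^{\tau_0}$, $f\in H^\infty$, and \eqref{eqn:estimate:kernel:K} to bound $|\K^{N+1}(\bar\p\varphi\,\p_j f)|\lesssim\K^{N+1+\tau_0}_{N-n+2,n-1/2}(|\p_j f|)\lesssim\K^{N+1+\tau_0}_{N-n+2,n-1/2}(1)$, Lemma~\ref{lem:estK} gives a kernel of type $\ge\tau_0$ (hence $\lesssim\omega_{\tau_0-1}\lesssim(1-|z|^2)^{\tau_0-1}$ when $\tau_0<1$), and the $\mathcal P^{N+1}(\p_j\varphi\,f)$-term is $\lesssim\K^{N+1}_{n+N+1-?,0}(\omega_{\tau-1})\lesssim(1-|z|^2)^{\tau-1}\lesssim(1-|z|^2)^{t-1}$. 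The $h$-contribution lies in $B^\infty_\tau$ so its first derivative is $\lesssim(1-|z|^2)^{\tau-1}$. Collecting, $|\Phi\,\p_j f|\lesssim(1-|z|^2)^{t-1}$ for any $t\le\tau_0$, which is the base case; higher $k$ follow inductively because once $f\in B^\infty_t$ the quantities $|\p_\gamma\varphi\,\p_\beta f|$ improve accordingly.

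For part (ii) the scheme is identical but now $f\in B^\infty_s$ is the starting regularity, so $|\p_\beta f|\lesssim\omega_{s-|\beta|}$. Choosing $\alpha$ with $|\alpha|=k$ (the integer fixed with $k>\tau$), in the formula \eqref{eqn:repder} the term $\p_\alpha\mathcal P^N(\varphi f)$ reduces to the $h$-part, which is in $B^\infty_\tau$ and contributes $\lesssim(1-|z|^2)^{\tau-k}\lesssim(1-|z|^2)^{t-k}$; each $\mathcal P^{N+k}(\p_\gamma\varphi\,\p_\beta f)$ with $\beta+\gamma=\alpha$, $|\beta|<k$, is bounded via Lemma~\ref{lem:estK} by $\K^{N+k}_{n+N+k,0}$ applied to $\omega_{\tau-|\gamma|}\omega_{s-|\beta|}$, and Lemma~\ref{lem:prodomegas} (or directly Lemma~\ref{lem:poteomegas}) gives $\omega_{\tau-|\gamma|}\omega_{s-|\beta|}\lesssim\omega_{\tau-k}+\omega_{s-|\beta|}$, so the integral is $\lesssim\omega_{\tau-k}+\omega_{s+?-k}$, dominated by $(1-|z|^2)^{t-k}$ for $t\le\min(\tau,s+\tau_0)$ (indeed $s+\tau_0\le s+1\le s+(k-|\beta|)$ when $|\beta|\le k-1$). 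Finally the remainder $\K^{N+k}(\bar\p\varphi\,\p_\alpha f)$ is controlled using \eqref{eqn:estimate:kernel:K} by $\K^{N+k+\tau_0}_{N-n+1,n-1/2}(|\p_\alpha f|)\lesssim\K^{N+k+\tau_0}_{N-n+1,n-1/2}(\omega_{s-k})$ — wait, more carefully one pairs $\widetilde\varphi\lesssim(1-|w|^2)^{\tau_0}$ with $|\p_\alpha f(w)|\lesssim(1-|w|^2)^{s-k}$ and applies Lemma~\ref{lem:estK} to a kernel of type $\tau_0+s$ (modulo the $-k$ shift), obtaining $\lesssim(1-|z|^2)^{\min(\tau_0+s,\,?)-k}$, again $\lesssim(1-|z|^2)^{t-k}$. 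Dividing by $\varphi$ on the corona finishes it.

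The main obstacle I anticipate is bookkeeping the exact exponents so that every kernel that appears has \emph{type} at least $t$ (equivalently $n+(N+k')-M-2L\ge t$ in the notation of Lemma~\ref{lem:estK}) after the derivatives are distributed by Leibniz; the competing contributions are $\omega_{\tau-k}$ (from $h$ and the top-order part of $\varphi$) and $\omega_{s+\tau_0-k}$ (from the $\bar\p\varphi$-remainder and the cross terms), and one must check that their minimum is exactly the claimed $\min(\tau,s+\tau_0)$ and that the borderline logarithmic cases ($\omega_0$, arising when an exponent hits $0$) are harmless because $\omega_0(z)\lesssim(1-|z|^2)^{-\varepsilon}$ for any $\varepsilon>0$ and we have room to spare since $t$ is taken strictly below, or equal with slack, to these thresholds. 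A secondary point is justifying the differentiation of \eqref{eqn:form210}/\eqref{eqn:form21} and the use of Lemma~\ref{lem:der35} for $f$ merely in $H^\infty$ (not $H(\bar\B)$): this is exactly the approximation argument flagged in Remark~\ref{rem:derPN}, applied to $f_r(z)=f(rz)$, together with dominated convergence using the kernel estimates above as uniform majorants.
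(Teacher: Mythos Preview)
Your approach is essentially the paper's: apply Lemma~\ref{lem:der35}, identify $\p_\alpha\mathcal{P}^N(\varphi f)=\p_\alpha h$, and estimate the cross terms and the $\K^{N+k}$-remainder via Lemma~\ref{lem:estK}. Two corrections. First, in part~(i) the step $\K^{N+1+\tau_0}_{N-n+2,n-1/2}(|\p_j f|)\lesssim\K^{N+1+\tau_0}_{N-n+2,n-1/2}(1)$ is false: $\p_j f$ is not bounded. Use instead $|\p_j f(w)|\lesssim\|f\|_\infty(1-|w|^2)^{-1}$, which shifts the kernel to $\K^{N+\tau_0}_{N-n+2,n-1/2}(1)$ of type $\tau_0-1$ and then gives the bound $(1-|z|^2)^{\tau_0-1}$ you want. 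Second, no induction on $k$ is needed in~(i): since $t\le\tau_0<1$, the single estimate at $k=1$ already yields $f\in B^\infty_t$. The paper in fact treats (i) and (ii) uniformly at one fixed $k>\tau$, feeding in only the crude bounds $|\p_\beta f|\lesssim(1-|w|^2)^{-|\beta|}$ (case~(i)) or $|\p_\beta f|\lesssim\omega_{s-|\beta|}$ (case~(ii)) and controlling the whole sum $F_\alpha=\sum_{|\beta|<k}|\p_\gamma\varphi||\p_\beta f|$ in one stroke via Lemmas~\ref{lem:poteomegas} and~\ref{lem:prodomegas}.
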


\begin{proof}
Let $f$ and $t$ be as in either (i) or (ii), and assume $f$ satisfies~{\eqref{eqn:form210}}.
Let $k\in\Z$ and $\alpha\in\N^n$ such that $|\alpha|=k>\tau$.
We want to prove that $|\p_{\alpha}f(z)|\lesssim(1-|z|^2)^{t-k}$. Since
\begin{equation}\label{eqn:est:partial:alpha:f}
|\p_{\alpha}f|\le
\|\chi/\varphi\|_{\infty}\,|\varphi\p_{\alpha}f|+(1-\chi)|\p_{\alpha}f|,
\end{equation}
 we only need to estimate $|\varphi\p_{\alpha}f|$. 

Lemma~{\ref{lem:der35}}, {\eqref{eqn:form210}} and~{\eqref{eqn:estimates:P:K}} show that
\begin{equation}\label{eqn:est:varphi:partial:alpha:f}
|\varphi\p_{\alpha}f|\lesssim
      |\p_{\alpha}h| + \K^{N+k}_{n+N+k,0}(F_{\alpha}) + \K^{N+k}_{N+k-n+1,n-\frac12}(|\p_{\alpha}f|\pphi),
\end{equation}
where $\displaystyle{F_{\alpha}=\sum_{\substack{\beta+\gamma=\alpha\\|\beta|<k}} |\p_{\beta}f|\,|\p_{\gamma}\varphi|}$.
Note that we only need to prove that 
$$
|\p_{\alpha}f(w)|\pphi(w)\lesssim(1-|w|^2)^{t-k} 
\,\,\mbox{ and }\,\,\,\, 
 F_{\alpha}(w)\lesssim(1-|w|^2)^{t-k},
$$
because then~{\eqref{eqn:est:varphi:partial:alpha:f}} and Lemma~{\ref{lem:estK}} show that
\begin{eqnarray*}
|(\varphi\p_{\alpha}f)(z)|
&\lesssim& |\p_{\alpha}h(z)|
+\K^{N+t}_{N+k-n+1,n-\frac12}(1)(z)+\K^{N+t}_{n+N+k,0}(1)(z)\\
&\lesssim& \omega_{t-k}(z)+\omega_{\tau-k}(z)=(1-|z|^2)^{t-k},
\end{eqnarray*}
and therefore, by~{\eqref{eqn:est:partial:alpha:f}}, we conclude that  
$|\p_{\alpha}f(z)|\lesssim  (1-|z|^2)^{t-k}.$

\vspace*{6pt}
\noindent
(i) Let $f\in H^{\infty}$. Then $|\p_{\beta}f(w)|\lesssim\|f\|_{\infty}(1-|w|^2)^{-|\beta|}$, for every multiindex $\beta$. So 
$$
|\p_{\alpha}f(w)|\pphi(w)\lesssim \|\varphi\|_{G^{\tau_0}_{\tau,k}}\|f\|_{\infty}(1-|w|^2)^{\tau_0-k}
                         \lesssim \|\varphi\|_{G^{\tau_0}_{\tau,k}}\|f\|_{\infty}(1-|w|^2)^{t-k},
$$
for every $t\in\R$ such that $t\le\tau_0$. 
Next Lemma~{\ref{lem:poteomegas}} shows that
\begin{equation*}\begin{split}
F_{\alpha}(w)
&\lesssim\|\varphi\|_{G^{\tau_0}_{\tau,k}}\|f\|_{\infty}\sum_{i=0}^{k-1}(1-|w|^2)^{-i}\omega_{\tau-k+i}(z)
\\&\lesssim\|\varphi\|_{G^{\tau_0}_{\tau,k}}\|f\|_{\infty}(1-|w|^2)^{t-k},
\end{split}\end{equation*}
for every $t\in\R$ such that $t<1$ and $t\le\tau$.

\vspace*{6pt}
\noindent
(ii) Let $f\in B^{\infty}_s$, for some $0<s<\tau$. 
Then $|\p_{\beta}f(z)|\lesssim\|f\|_{B^{\infty}_s}\omega_{s-|\beta|}(z)$, for every multiindex $\beta$, so
Lemma \ref{lem:poteomegas} gives that 
\begin{equation*}\begin{split}
|\p_{\alpha}f(w)|\pphi(w)
&\lesssim\|f\|_{B^{\infty}_s}\|\varphi\|_{G^{\tau_0}_{\tau,k}}(1-|w|^2)^{s+\tau_0-k}\\ &\le\|f\|_{B^{\infty}_s}\|\varphi\|_{G^{\tau_0}_{\tau,k}}(1-|w|^2)^{t-k},
\end{split}\end{equation*}
for every $t\in\R$ such that $t\le s+\tau_0<s+1$, and Lemma~{\ref{lem:prodomegas} } shows that
\begin{eqnarray*}
F_{\alpha}(w)
&\lesssim& \|\varphi\|_{G^{\tau_0}_{\tau,k}}\|f\|_{B^{\infty}_s}S^{s,\tau}_{k-1,k}(w) \\
&\lesssim& \|\varphi\|_{G^{\tau_0}_{\tau,k}}\|f\|_{B^{\infty}_s}\{\omega_{s+1-k}(w)+(1-|w|^2)^{\tau-k}\} \\
&\lesssim& \|\varphi\|_{G^{\tau_0}_{\tau,k}}\|f\|_{B^{\infty}_s}(1-|w|^2)^{t-k},
\end{eqnarray*}
for every $t\in\R$ such that $t\le s+1$ and $t\le\tau$.
\end{proof}

\begin{lem}\label{lem7:thm:kerT1} 
If $f\in H^{\infty}$ satisfies~{\eqref{eqn:form210}} then $f\in B^{\infty}_\tau$.
\end{lem}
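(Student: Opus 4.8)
The plan is to bootstrap from Lemma~\ref{lem6:thm:kerT1} by iterating it finitely many times until we reach regularity exponent $\tau$. First I would observe that by Lemma~\ref{lem5:thm:kerT1} we already have $f\in H^\infty = B^\infty_0$ in a suitable sense, and that part~(i) of Lemma~\ref{lem6:thm:kerT1} upgrades this to $f\in B^\infty_t$ for every $0<t\le\tau_0$. In particular $f\in B^\infty_{\tau_0}$.

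Next I would apply part~(ii) of Lemma~\ref{lem6:thm:kerT1} repeatedly. Set $s_0=\tau_0$ and, as long as $s_j<\tau$, put $s_{j+1}=\min(\tau,s_j+\tau_0)$; part~(ii) (whose hypothesis $0<s_j<\tau$ is met, and which applies since $f$ satisfies~\eqref{eqn:form210}) gives $f\in B^\infty_{s_{j+1}}$. Since each step increases the exponent by $\tau_0>0$ until it is capped at $\tau$, after $m:=\lceil(\tau-\tau_0)/\tau_0\rceil$ steps we reach $s_m=\tau$, so $f\in B^\infty_\tau$. This is a finite induction, so there is no subtlety about passing to a limit. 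The accompanying norm estimate $\|f\|_{B^\infty_\tau}\lesssim\|f\|_{B^1_{-N_0}}+\|h\|_{B^\infty_\tau}$ follows by tracking constants through the finitely many applications of the previous lemmas, using Remarks~\ref{rem:constantstep1} and~\ref{rem:constantstep2} together with~\eqref{eqn:regh}.

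The only point requiring a little care is that part~(ii) of Lemma~\ref{lem6:thm:kerT1} is stated with a strict inequality $s<\tau$ in the hypothesis, so the very last step, which lands exactly at $t=\tau$, must start from some $s_{m-1}<\tau$ with $s_{m-1}+\tau_0\ge\tau$; this is automatic from the construction since the exponents are strictly increasing and only the final one equals $\tau$. I do not expect any genuine obstacle here — the work has all been done in Lemmas~\ref{lem:der35} and~\ref{lem6:thm:kerT1}, and Lemma~\ref{lem7:thm:kerT1} is essentially a packaging statement. The mildly delicate part, if any, is just making sure the chain of exponents is set up so that every invocation of Lemma~\ref{lem6:thm:kerT1}(ii) is legitimate and the last one hits $\tau$ on the nose.

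Finally, combining Lemma~\ref{lem7:thm:kerT1} with the reduction at the start of this section (recall that proving $f\in B^\infty_\tau$ amounts to showing $\tfrac{\chi}{\varphi}\K^N(f\bar\p\varphi)\in G^{\tau_0}_{\tau,k}$, with $(1-\chi)f+\tfrac{\chi}{\varphi}h\in G^{\tau_0}_{\tau,k}$ already handled by~\eqref{eqn:regh}), and with Lemmas~\ref{lem2:thm:kerT1}, \ref{lem4:thm:kerT1}, \ref{lem5:thm:kerT1} which successively place $f$ in $B^1_{-t}$, then $B^\infty_{-t}$, then $H^\infty$ for all $t>0$, completes the proof of Theorem~\ref{thm:kerT1}.
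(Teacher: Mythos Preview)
Your proposal is correct and follows essentially the same bootstrap argument as the paper: apply Lemma~\ref{lem6:thm:kerT1}(i) once to land in $B^\infty_{\tau_0}$, then iterate Lemma~\ref{lem6:thm:kerT1}(ii) finitely many times, gaining $\tau_0$ at each step until the exponent reaches $\tau$. Your added care about the strict inequality $s<\tau$ in the last step and the accompanying norm estimate are consistent with the paper's treatment (the latter is recorded there as Remark~\ref{rem:constantstep3}).
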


\begin{proof}
 Let $f\in H^{\infty}$. If $\tau=\tau_0$ there is nothing to prove by Lemma~{\ref{lem6:thm:kerT1} (i)}. 
So assume that $\tau_0<\tau$, and let $k\ge1$ be the greatest integer such that $k\tau_0<\tau$.
Since $f\in H^{\infty}$,  Lemma~{\ref{lem6:thm:kerT1} (i)} shows that $f\in B^{\infty}_{\tau_0}$, and then
Lemma~{\ref{lem6:thm:kerT1} (ii)} implies that $f\in B^{\infty}_{2\tau_0}$, so $f\in B^{\infty}_{3\tau_0}$,
... , so $f\in B^{\infty}_{k\tau_0}$, and hence $f\in B^{\infty}_{\tau}$.
\end{proof}

\begin{rem}\label{rem:constantstep3}
Observe that  the above arguments and   \eqref{eqn:form21} give the estimate
$\|f\|_{B^\infty_{\tau}}\lesssim \|f\|_{B^\infty_{-t}}+\|h\|_{B^\infty_\tau}$. 
\end{rem}

\begin{rem} The remarks \ref{rem:constantstep1},  \ref{rem:constantstep2} and \ref{rem:constantstep3}
 show that 
$$
\|f\|_{B^\infty_{\tau}}\lesssim \|f\|_{B^1_{-N_0}}+\|h\|_{B^\infty_\tau}.
$$
 
Note that the opposite estimate is always fulfilled.
 This follows from the continuous embedding $B^\infty_\tau\subset B^1_{-N_0}$, and 
the estimate $\|h\|_{B^\infty_{\tau}}\le \|\T^N_\varphi\|\|f\|_{B^\infty_{\tau}}$. 

Therefore
$$
\|f\|_{B^\infty_{\tau}}\approx\|f\|_{B^1_{-N}}+\|h\|_{B^\infty_\tau}.
$$
\end{rem}


\end{document}